\newtheorem{theo}{Theorem}[subsection]
\newtheorem{defi}[theo]{Definition}
\newtheorem{lem}[theo]{Lemma}
\newtheorem{rem}[theo]{Remark}
\newtheorem{prop}[theo]{Proposition}
\newtheorem{cor}[theo]{Corollary}
\newtheorem{ex}[theo]{Example}
\newtheorem{Thm}{Theorem}
\newcommand{\nc}{\newcommand}
\nc{\on}{\operatorname}
\nc{\C}{\mathbb{C}}
\nc{\R}{\mathbb{R}}
\nc{\Q}{\mathbb{Q}}
\nc{\Z}{\mathbb{Z}}
\nc{\N}{\mathbb{N}}
\nc{\bbH}{\mathbb{H}}
\nc{\bbK}{\mathbb{K}}
\nc{\bfa}{\mathbf{a}}
\nc{\bfA}{\mathbf{A}}
\nc{\bfb}{\mathbf{b}}
\nc{\bfi}{\mathbf{i}}
\nc{\bfK}{\mathbf{K}}
\nc{\bfk}{\mathbf{k}}
\nc{\bfx}{\mathbf{x}}
\nc{\bfy}{\mathbf{y}}
\nc{\bfone}{\boldsymbol 1}
\nc{\bfeta}{\boldsymbol \eta}
\nc{\bfkappa}{\boldsymbol \kappa}
\nc{\bfrho}{\boldsymbol \rho}
\nc{\bfsigma}{\boldsymbol \sigma}
\nc{\bfvarsigma}{\boldsymbol \varsigma}
\nc{\bfzeta}{\boldsymbol \zeta}
\nc{\A}{\mathbf{A}}
\nc{\U}{\mathbf{U}}
\nc{\clB}{\mathcal{B}}
\nc{\clC}{\mathcal{C}}
\nc{\clF}{\mathcal{F}}
\nc{\clL}{\mathcal{L}}
\nc{\clM}{\mathcal{M}}
\nc{\clO}{\mathcal{O}}
\nc{\clU}{\mathcal{U}}
\nc{\clUi}{\clU^{\imath}}
\nc{\clX}{\mathcal{X}}
\nc{\clXs}{\clX_{\mathrm{s}}}
\nc{\clY}{\mathcal{Y}}
\nc{\clYs}{\clY_{\mathrm{s}}}
\nc{\clW}{\mathcal{W}}
\nc{\clZ}{\mathcal{Z}}
\nc{\fra}{\mathfrak{a}}
\nc{\frh}{\mathfrak{h}}
\nc{\g}{\mathfrak{g}}
\nc{\frgl}{\mathfrak{gl}}
\nc{\frk}{\mathfrak{k}}
\nc{\fram}{\mathfrak{m}}
\nc{\frn}{\mathfrak{n}}
\nc{\frp}{\mathfrak{p}}
\nc{\frs}{\mathfrak{s}}
\nc{\frt}{\mathfrak{t}}
\nc{\frsl}{\mathfrak{sl}}
\nc{\frso}{\mathfrak{so}}
\nc{\frsp}{\mathfrak{sp}}
\nc{\frsu}{\mathfrak{su}}
\nc{\fru}{\mathfrak{u}}
\nc{\frz}{\mathfrak{z}}
\nc{\fin}{\mathrm{fin}}
\nc{\inv}{^{-1}}
\nc{\qu}{\quad}
\nc{\qqu}{\qquad}
\nc{\la}{\langle}
\nc{\ra}{\rangle}
\nc{\Ker}{\on{Ker}}
\nc{\im}{\on{Im}}
\nc{\Hom}{\on{Hom}}
\nc{\End}{\on{End}}
\nc{\Span}{\on{Span}}
\nc{\id}{\on{id}}
\nc{\Aut}{\on{Aut}}
\nc{\ad}{\on{ad}}
\nc{\ch}{\on{ch}}
\nc{\sgn}{\on{sgn}}
\nc{\tot}{\on{tot}}
\nc{\rk}{\on{rk}}
\nc{\rank}{\on{rank}}
\nc{\Wt}{\on{Wt}}
\nc{\diag}{\on{diag}}
\nc{\Mat}{\on{Mat}}
\nc{\tr}{\on{tr}}
\nc{\Diag}{\on{Diag}}
\nc{\GL}{\on{GL}}
\nc{\SO}{\on{SO}}
\nc{\Sp}{\on{Sp}}
\nc{\gr}{\on{gr}}
\nc{\Ind}{\on{Ind}}
\nc{\Res}{\on{Res}}
\nc{\wt}{\on{wt}}
\nc{\lt}{\on{lt}}
\nc{\lc}{\on{lc}}
\nc{\Br}{\on{Br}}
\nc{\norm}{\on{norm}}
\nc{\amp}{\on{amp}}
\nc{\Int}{\on{int}}
\nc{\Inv}{\on{inv}}
\nc{\pr}{\on{pr}}
\nc{\sh}{\on{sh}}
\nc{\std}{P^{\AI}}
\nc{\AI}{\mathrm{A\!I}}
\nc{\CR}{\mathrm{CR}}
\nc{\ev}{\mathrm{ev}}
\nc{\odd}{\mathrm{odd}}
\nc{\OT}{\mathrm{OT}}
\nc{\Par}{\mathrm{Par}}
\nc{\RS}{\mathrm{RS}}
\nc{\RT}{\mathrm{RT}}
\nc{\Sing}{\mathrm{Sing}}
\nc{\SST}{\mathrm{SST}}
\nc{\STab}{\mathrm{ST}}
\nc{\ol}{\overline}
\nc{\ul}{\underline}
\nc{\hf}{\frac{1}{2}}
\nc{\vphi}{\varphi}
\nc{\vrho}{\varrho}
\nc{\vpi}{\varpi}
\nc{\vep}{\varepsilon}
\nc{\eps}{\epsilon}
\nc{\lm}{\lambda}
\nc{\til}{\widetilde}
\nc{\IF}{\text{ if }}
\nc{\AND}{\text{ and }}
\nc{\OR}{\text{ or }}
\nc{\OW}{\text{ otherwise}}
\nc{\lowerterms}{\text{(lower terms)}}
\nc{\higherterms}{\text{(higher terms)}}
\nc{\lex}{\text{lex}}
\nc{\sesi}{\text{ss}}
\nc{\ST}{\text{ such that }}
\nc{\Forsome}{\text{ for some }}
\nc{\Forall}{\text{ for all }}
\nc{\Atil}{\widetilde{A}}
\nc{\Btil}{\widetilde{B}}
\nc{\Etil}{\widetilde{E}}
\nc{\Ftil}{\widetilde{F}}
\nc{\Itil}{\widetilde{I}}
\nc{\Xtil}{\widetilde{X}}
\nc{\Ytil}{\widetilde{Y}}
\nc{\Ui}{\U^{\imath}}
\nc{\Uidot}{\dot{\U}^\imath}
\nc{\UidotA}{\dot{\U}^\imath_{\bfA}}
\nc{\taui}{\tau^{\imath}}
\nc{\psii}{\psi^{\imath}}
\nc{\wti}{\wt^\imath}
\nc{\Udot}{\dot{\U}}
\nc{\UdotA}{\Udot_{\bfA}}
\nc{\plim}[1][]{\mathop{\varprojlim}\limits_{#1}}
\nc{\ilim}[1][]{\mathop{\varinjlim}\limits_{#1}}
\nc{\TBA}{{\large {\bf \textcolor{red}{To Appear}}}}
\nc{\alert}{\textcolor{red}}
\nc{\Sbox}[1]{\ytableausetup{smalltableaux}
\begin{ytableau}
#1
\end{ytableau}}
\nc{\Cbox}[1]{\ytableausetup{boxsize=normal,centertableaux}
\begin{ytableau}
#1
\end{ytableau}}
\title[A new tableau model for representations of $SO_n$]{A new tableau model for representations of the special orthogonal group}
\author[H. Watanabe]{Hideya Watanabe}
\address{(H. Watanabe) Osaka Central Advanced Mathematical Institute, Osaka Metropolitan University, Osaka, 558-8585, Japan}
\email{watanabehideya@gmail.com}
\subjclass[2010]{Primary~05E10; Secondary~17B10,17B37}
\keywords{tableau model, crystal, quantum group, quantum symmetric pair}
\date{\today}
\begin{document}
\maketitle

\begin{abstract}
We provide a new tableau model from which one can easily deduce the characters of finite-dimensional irreducible polynomial representations of the special orthogonal group $SO_n(\mathbb{C})$. This model originates from the representation theory of the $\imath$quantum group (also known as the quantum symmetric pair coideal subalgebra) of type $\AI$, and is equipped with a combinatorial structure, which we call $\AI$-crystal structure. This structure enables us to describe combinatorially the tensor product of an $SO_n(\mathbb{C})$-module and a $GL_n(\mathbb{C})$-module, and the branching from $GL_n(\mathbb{C})$ to $SO_n(\mathbb{C})$.
\end{abstract}

\section{Introduction}
Combinatorial objects such as partitions, Young tableaux, and their variants have been effectively used to understand the representation theory of the symmetric group, the general linear group $GL_n = GL_n(\mathbb{C})$, the general linear algebra $\frgl_n = \frgl_n(\C)$, and their variants. For example, the set of standard (resp., semistandard) Young tableaux of a fixed shape parametrizes a basis of the corresponding irreducible representation of the symmetric group (resp., $GL_n$ and $\frgl_n$).

For a better understanding of a representation theory, it is quite useful to construct a concrete combinatorial model which uniformly models a certain class of representations. For example, in the representation theory of reductive groups and their Lie algebras, one may want a combinatorial model from which one can easily deduce the character of a representation under consideration.

The semistandard Young tableau model for $GL_n$ and $\mathfrak{gl}_n$ is such a typical example. Let us see this in more detail.
Let $\Par_n$ denote the set of partitions of length not greater than $n$. For each $\lambda \in \Par_n$, let $\SST_n(\lm)$ denote the set of semistandard Young tableaux of shape $\lm$ with entries in $\{ 1,\ldots,n \}$. To each semistandard Young tableau $T \in \SST_n(\lm)$, a weight $\wt(T) \in \Z^n$ is assigned. Then, $\SST_n(\lm)$ models the finite-dimensional irreducible $\mathfrak{gl}_n$-module (equivalently, $GL_n$-module) $V^{\mathfrak{gl}_n}(\lm)$ of highest weight $\lm$ in the sense that the character of $V^{\mathfrak{gl}_n}(\lm)$ equals the generating function
$$
\sum_{T \in \SST_n(\lm)} \bfx^{\wt(T)} \in \Z[x_1^{\pm 1}, \ldots, x_n^{\pm 1}]
$$
of weights of $\SST_n(\lm)$, where
$$
\bfx^{(a_1,\ldots,a_n)} := x_1^{a_1} \cdots x_n^{a_n}.
$$

The aim of this paper is to introduce a new tableau model for the finite-dimensional irreducible representations of integer highest weight for the special orthogonal algebra $\mathfrak{so}_n = \mathfrak{so}_n(\mathbb{C})$, equivalently, the finite-dimensional irreducible polynomial representations for the special orthogonal group. The isomorphism classes of such representations are parametrized by the set
\begin{align*}
  \begin{split}
    &X_{\mathfrak{so}_n, \text{int}}^+ \\
    &\quad := \begin{cases}
      \{ \nu = (\nu_1,\nu_3,\dots,\nu_{2m-1}) \in \mathbb{Z}^m \mid \nu_1 \geq \nu_3 \geq \dots \geq \nu_{2m-1} \} & \text{ if } n \text{ is odd}, \\
      \{ \nu = (\nu_1,\nu_3,\dots,\nu_{2m-1}) \in \mathbb{Z}^m \mid \nu_1 \geq \nu_3 \geq \dots \geq \nu_{2m-3} \geq |\nu_{2m-1}|  \} & \text{ if } n \text{ is even}
    \end{cases}
  \end{split}
\end{align*}
of dominant integer weights, where
$$
m := \begin{cases}
  \frac{n-1}{2} \qu & \IF n \text{ is odd}, \\
  \frac{n}{2} \qu & \IF n \text{ is even}
\end{cases}
$$
denotes the rank of $\mathfrak{so}_n$.
It is convenient to introduce the following family of $\mathfrak{so}_n$-modules parametrized by $\Par_m$.
For each $\rho \in \Par_m$, set
$$
V^{\mathfrak{so}_n}(\rho) := \begin{cases}
  V^{\mathfrak{so}_n}(\nu_\rho) & \text{ if } \ell(\rho) < \frac{n}{2}, \\
  V^{\mathfrak{so}_n}(\nu_\rho^+) \oplus V^{\mathfrak{so}_n}(\nu_\rho^-) & \text{ if } \ell(\rho) = \frac{n}{2},
\end{cases}
$$
where
$$
\nu_\rho := (\rho_1,\rho_2,\dots,\rho_{\ell(\rho)},0,\dots,0), \ \nu_\rho^\pm := (\rho_1,\rho_2,\dots,\rho_{m-1}, \pm \rho_{m}) \in X_{\mathfrak{so}_n, \text{int}}^+.
$$

Let $\rho \in \Par_m$ and $T \in \SST_n(\rho)$. We say that $T$ is an $\AI$-tableau (the origin of the name will be clear later) if
$$
t_{i,1}^c \leq t_{i,2} \ \Forall 1 \leq i \leq d_2,
$$
where $t_{i,j}$ denotes the $(i,j)$-th entry of $T$, and
$$
d_j := \max \{ k \mid \rho_k \geq j \}, \qu \{ t_{1,1}^c,\ldots,t_{n-d_1,1}^c \} := \{ 1,\ldots,n \} \setminus \{ t_{1,1},\ldots,t_{d_1,1} \}.
$$
For each $\rho \in \Par_m$, set
$$
\SST_n^{\AI}(\rho) := \{ T \in \SST_n(\rho) \mid \text{$T$ is an $\AI$-tableau} \}.
$$
To each $T \in \SST_n^{\AI}(\rho)$, a degree $\deg(T) = (\deg_1(T),\deg_3(T),\ldots,\deg_{2m-1}(T)) \in \Z^m$ is assigned. Then, we can say that $\SST_n^{\AI}(\rho)$ models the $\mathfrak{so}_n$-module $V^{\mathfrak{so}_n}(\rho)$ in the following sense.

\begin{Thm}
Let $\rho \in \Par_m$. Then, we have
$$
\ch V^{\mathfrak{so}_n}(\rho) = \frac{1}{2^m} \sum_{T \in \SST_n^{\AI}(\rho)} \sum_{\sigma_1,\sigma_3,\ldots,\sigma_{2m-1} \in \{+,-\}} y_1^{\sigma_1 \deg_1(T)} y_3^{\sigma_3 \deg_3(T)} \cdots y_{2m-1}^{\sigma_{2m-1} \deg_{2m-1}(T)},
$$
where the left hand-side denotes the character of the $\mathfrak{so}_n$-module $V^{\mathfrak{so}_n}(\rho)$.
\end{Thm}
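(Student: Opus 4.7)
My plan is to deduce the character identity from the $\AI$-crystal structure on $\SST_n^{\AI}(\rho)$ announced in the abstract, which originates from the representation theory of the $\imath$quantum group $\Ui$ of type $\AI$. The guiding idea is that $\SST_n^{\AI}(\rho)$ parametrizes the ``$\imath$-highest weight'' elements of an $\AI$-crystal that refines the classical $\frgl_n$-crystal on $\SST_n(\rho)$, and that each connected component of this $\AI$-crystal realizes, in the classical limit, an $\frso_n$-subrepresentation inside the $\frgl_n$-module $V^{\frgl_n}(\rho)$.

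The first step is to verify that the defining condition $t_{i,1}^c \leq t_{i,2}$ exactly captures the vanishing of the $\imath$-raising operators on $T$, so that $\SST_n^{\AI}(\rho)$ is a complete set of $\imath$-highest representatives. Then I would identify the degree $\deg(T) = (\deg_1(T),\dots,\deg_{2m-1}(T))$ with the $\imath$-weight $\wti(T)$, and prove that the character of the $\AI$-crystal component through $T$ equals the symmetrized monomial $\frac{1}{2^m}\sum_\sigma y^{\sigma \deg(T)}$ appearing on the right-hand side. The factor $\frac{1}{2^m}$ encodes the action of the sign-flip subgroup of the Weyl group $W(\frso_n)$: each $\AI$-component overcounts the corresponding $\frso_n$-weight orbit by exactly $2^m$, the overcounting being dictated by the structure of the embedding $\Ui \hookrightarrow \U$.

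Summing per-component characters over $T \in \SST_n^{\AI}(\rho)$ then gives the total character of $V^{\frso_n}(\rho)$, after matching the $\AI$-crystal decomposition with the $\frso_n$-isotypic decomposition. For $\ell(\rho) = m$ with $n$ even, one must verify that the symmetrized sum over $2^m$ signs correctly recovers both summands $V^{\frso_n}(\nu_\rho^{\pm})$, since in this case the sign-flip subgroup of $W(\frso_{2m})$ has order only $2^{m-1}$, with the extra factor of $2$ being absorbed by the direct-sum definition of $V^{\frso_n}(\rho)$.

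The main obstacle is the classical-limit identification between each $\AI$-crystal component and its associated $\frso_n$-module, i.e.\ showing that a component with $\imath$-highest weight $\deg(T)$ realizes the expected $V^{\frso_n}$ piece with the precise character $\frac{1}{2^m}\sum_\sigma y^{\sigma \deg(T)}$. Additional subtleties appear when some $\deg_i(T) = 0$: the sign $\sigma_i$ then produces repeated monomials, and one has to check that the weight multiplicities in the overall sum nevertheless match $\dim V^{\frso_n}(\rho)_\mu$ exactly. Establishing this tight interplay between the $\AI$-condition, the degree statistic, and the signed-weight combinatorics of $\frso_n$ is the heart of the argument.
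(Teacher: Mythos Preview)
Your plan rests on a misreading of what $\SST_n^{\AI}(\rho)$ is. The $\AI$-tableaux are not $\imath$-highest elements, each heading its own connected component of the $\AI$-crystal on $\SST_n(\rho)$: rather, $\SST_n^{\AI}(\rho)$ is itself a \emph{single} connected $\AI$-crystal (Theorem~\ref{main theorem}\,\eqref{main theorem 1}), and its elements index a \emph{basis} of $V^{\mathfrak{so}_n}(\rho)$, not a decomposition of it into subrepresentations. The defining inequality $t_{i,1}^c \le t_{i,2}$ has nothing to do with vanishing of raising operators; by Remark~\ref{AI-condition in other words} it is exactly the condition that $K_1(T) = K(C_1)C_2\cdots C_{\lm_1}$, which is what one uses to show $\SST_n^{\AI}(\rho)$ is closed under the $\Btil_i$ (Proposition~\ref{SSTnAI is AI-crystal}). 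The genuine highest-weight-type objects are the \emph{singular elements} of Definition~\ref{definition of singular element}, and Lemma~\ref{singular element of degree rho in SSTn(rho)} shows there is essentially only one such in $\SST_n^{\AI}(\rho)$, namely $T_\rho$ (or the pair $\{T_\rho,K_1(T_\rho)\}$ when $\ell(\rho)=n/2$). For a concrete counterexample to your first step: when $k \le m$, every element of $\SST_n(1^k)$ is an $\AI$-tableau (the second condition is vacuous), yet only one of them is singular.

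Accordingly, the symmetrized monomial $\frac{1}{2^m}\sum_\sigma y^{\sigma\deg(T)}$ attached to a single $T$ is not the character of any subrepresentation; it is the contribution of the basis element $T$ to the weight generating function of $\ol{\clL} := \C\,\SST_n^{\AI}(\rho)$, via the weight-space analysis at the end of Subsection~\ref{Subsection AI-crystal}. The paper's actual route is: prove $\SST_n^{\AI}(\rho)$ is $\Btil_i$-stable; prove it is connected (using the $P^{\AI}$-symbol map, the uniqueness Lemma~\ref{singular element of degree rho in SSTn(rho)}, and the low-rank connectedness Lemmas~\ref{Connectedness of SST3AI(l)}--\ref{Connectedness of SST4AI(l1,l2)} feeding into Lemma~\ref{Btil singular = Ytil highest}); and then identify $\C\,\SST_n^{\AI}(\rho) = \Btil T_\rho = \Ytil h(T_\rho)$, which by the $\imath$quantum-group input recalled in Subsection~\ref{subsection results for iquantum group} corresponds precisely to $V^{\mathfrak{so}_n}(\rho)$. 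The character identity is then immediate from the definition of $\ch_{\AI}$.
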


Let us compare the already known tableau models, some of which can model irreducible representations of half-integer highest weight as well, with ours. King and El-Sharkaway \cite{KES83} introduced the notion of orthogonal tableaux by investigating the branching from $\mathfrak{so}_n$ to $\mathfrak{so}_{n-2}$. A weight is assigned to each orthogonal tableau. Then, they proved that the associated generating functions are irreducible characters of $\mathfrak{so}_n$. Koike and Terada \cite{KT90} introduced another tableau model for $\mathfrak{so}_n$. Their tableaux need $3m$ (resp., $4m$) kinds of letters when $n$ is odd (resp., even). Sundaram \cite{Sun90} (for $\mathfrak{so}_{2m+1}$), Proctor \cite{Pr90}, and Okada \cite{O91} (for $\mathfrak{so}_{2m}$) constructed similar, but more or less simple, tableau models. Kashiwara and Nakashima \cite{KN94} constructed totally different tableau model. Their model is equipped with a crystal structure, which originates from the representation theory of the quantum groups. Our new tableau model is, as a set, different from the other models above, and is much simpler than models of King-El-Sharkaway, Koike-Terada, and Kashiwara-Nakashima; the constraints are only the ordinary semistandardness condition and an easily checked condition on the first two columns. Our character formula is a little bit involved compared to the other models, but it is still easy. What is unique to our model is a new combinatorial structure, which we call the $\AI$-crystal structure. This structure is closely related to the representation theory of the $\imath$quantum group (also known as the quantum symmetric pair coideal subalgebra) of type $\AI$.

An $\imath$quantum group is a certain right coideal subalgebra of a quantum group appearing in the theory of quantum symmetric pairs formulated by G. Letzter \cite{Le99}. Among them, the $\imath$quantum group of type $\AI$, which has appeared in \cite{GK91} earlier than \cite{Le99}, is the subalgebra $\Ui$ of the quantum group $\U = U_q(\frgl_n)$ associated to $\frgl_n$ generated by
$$
B_i := F_i + q\inv E_i K_i\inv, \qu i \in \{1,\ldots,n-1\},
$$
where $E_i,F_i,K_i^{\pm 1}$, $i \in \{ 1,\ldots,n-1 \}$ denote the Chevalley generators of $\U$. Under the classical limit $q \rightarrow 1$, it tends to the universal enveloping algebra $U(\frso_n)$. Here, $\frso_n$ is embedded into $\frgl_n$ as the Lie subalgebra generated by $b_i := f_i + e_i$, $i \in \{ 1,\dots,n-1 \}$.

In \cite{W21b}, the crystal limit $q \rightarrow \infty$ of the action of $B_i$ on a certain class of $\Ui$-modules was defined. It is a linear operator $\Btil_i$. Abstracting properties of $\Btil_i$'s, we introduce the notion of $\AI$-crystals. An $\AI$-crystal is a set $\clB$ equipped with structure maps $\Btil_i : \clB \rightarrow \clB \sqcup \{0\}$ and $\deg_i : \clB \rightarrow \Z$ satisfying certain axioms. Then, it is shown that for each $\rho \in \Par_m$, the set $\SST_n^{\AI}(\rho)$ admits an $\AI$-crystal structure which can be thought of as the crystal limit of the $\Ui$-module $V_q^{\mathfrak{so}_n}(\rho)$ (a $q$-analogue of the $\mathfrak{so}_n$-module $V^{\mathfrak{so}_n}(\rho)$). This is the second achievement in an attempt to generalize the theory of crystal basis to $\imath$quantum groups; the first example was given in \cite{W20} for quasi-split type $\mathrm{AIII}_{2r}$ with asymptotic parameters. No general theory unifying these two crystal theory is known.

Thanks to this $\Ui$-representation theoretic interpretation, it turns out that $\AI$-crystals model not only the $\mathfrak{so}_n$-modules $V^{\mathfrak{so}_n}(\rho)$ but also the tensor product of an $\mathfrak{so}_n$-module and a $\mathfrak{gl}_n$-module, and the branching from $\mathfrak{gl}_n$ to $\mathfrak{so}_n$.

Let $\lm \in \Par_n$ and $\rho \in \Par_m$. Then, $\SST_n^{\AI}(\rho) \otimes \SST_n(\lm) := \SST_n^{\AI}(\rho) \times \SST_n(\lm)$ is equipped with an $\AI$-crystal structure. This structure reflects the $\frso_n$-module structure of $V^{\mathfrak{so}_n}(\rho) \otimes V^{\mathfrak{gl}_n}(\lm)$, which is the classical limit of the $\Ui$-module structure of the corresponding tensor product module (recall that $\Ui$ is a right coideal of $\U$). Mimicking Schensted's insertion algorithm, we introduce an algorithm which tells us how the tensor product $\SST_n^{\AI}(\rho) \otimes \SST_n(\lm)$ decomposes into several copies of $\SST_n^{\AI}(\sigma)$'s, $\sigma \in \Par_m$. Such insertion schemes for other tableau models have been invented in \cite{Sun90,Pr90,O91,Lec03}.

As a special case of the tensor product modules, one can consider $V^{\mathfrak{gl}_n}(\lm) = V^{\mathfrak{so}_n}(\emptyset) \otimes V^{\mathfrak{gl}_n}(\lm)$ because $V^{\mathfrak{so}_n}(\emptyset)$ is the trivial representation. Recall that $\SST_n(\lm)$ models $V^{\mathfrak{gl}_n}(\lm)$. By the argument above, we see that $\SST_n(\lm) = \SST_n^{\AI}(\emptyset) \otimes \SST_n(\lm)$ is equipped with an $\AI$-crystal structure. This structure reflects the $\frso_n$-module structure of $V^{\mathfrak{gl}_n}(\lm)$. To each $T \in \SST_n(\lm)$, an $\AI$-tableau $\std(T)$, called the $P^{\AI}$-symbol of $T$, is assigned. For each $\rho \in \Par_m$, the subset
$$
\{ T \in \SST_n(\lm) \mid \text{the shape of $\std(T)$ is $\rho$} \}
$$
forms an $\AI$-crystal isomorphic to the disjoint union of several copies of $\SST_n^{\AI}(\rho)$. In this way, we obtain an ``irreducible decomposition'' of $\SST_n(\lm)$ as an $\AI$-crystal, which corresponds to a decomposition of $V^{\mathfrak{gl}_n}(\lm)$ as an $\mathfrak{so}_n$-module.

This paper is organized as follows. In Section \ref{Section Combinatorics}, we prepare notation concerning combinatorial objects, and introduce the notion of $\AI$-crystals and $\AI$-tableaux, which play key roles in the construction of our new tableau model. In Section \ref{Section Representation theoretic interpretation}, we give a representation theoretic interpretation to combinatorial objects introduced in the previous section. Also, our main theorem is proved there. In Section \ref{Section RS correspondence}, we develop an insertion scheme for our model. This enables us to understand the tensor product of an $\mathfrak{so}_n$-module and a $\mathfrak{gl}_n$-module and the branching from $\mathfrak{gl}_n$ to $\mathfrak{so}_n$ from an $\AI$-crystal theoretic point of view.

\subsection*{Acknowledgement}
The author would like to thank the referees for careful readings and valuable comments.
He is grateful to Il-Seung Jang for many helpful comments on the draft version of this paper. This work was supported by JSPS KAKENHI Grant Numbers JP20K14286 and JP21J00013.

\subsection*{Data availability}
Data sharing not applicable to this article as no datasets were generated or analysed during the current study.

\subsection*{Notation}
Throughout this paper, we use the following notation:
\begin{itemize}
\item $\Z_{\geq 0}$: the set of nonnegative integers.
\item $\Z_{\ev}$: the set of even integers.
\item $\Z_{\odd}$: the set of odd integers.
\item For $p \in \{ \ev,\odd \}$, $\Z_{\geq 0, p} := \Z_{\geq 0} \cap \Z_p$.
\item For $a,b \in \Z$, $[a,b] := \{ c \in \Z \mid a \leq c \leq b \}$.
\item For $a,b \in \Z$ and $p \in \{ \ev, \odd \}$, $[a,b]_p := [a,b] \cap \Z_p$.
\end{itemize}

\section{Combinatorics}\label{Section Combinatorics}
In this section, we prepare notation concerning combinatorial objects used throughout the paper. After reviewing the theory of crystals of type A, we introduce the notion of $\AI$-crystals and $\AI$-tableaux, which play key roles in the construction of our new tableau model for finite-dimensional $\mathfrak{so}_n$-modules of integer highest weight.

\subsection{Tableaux}
A partition is a non-increasing sequence $\lm = (\lm_1,\ldots,\lm_l)$ of positive integers; $l$ is referred to as the length of $\lm$, and is denoted by $\ell(\lm)$. The empty sequence $\emptyset$ is the unique partition of length $0$. The size of a partition $\lm = (\lm_1,\ldots,\lm_l)$ is defined to be $\sum_{i=1}^l \lm_i$, and is denoted by $|\lm|$. Let $\Par_l$ denote the set of partitions of length not greater than $l$, and $\Par := \bigcup_{l \geq 0} \Par_l$ the set of partitions.

The Young diagram of shape $\lm \in \Par$ is the set
$$
D(\lm) := \{ (i,j) \mid i \in [1,\ell(\lm)], \ j \in [1,\lm_i] \}.
$$
For $\lm,\mu \in \Par$, we write $\lm \triangleleft \mu$ if $D(\lm) \subset D(\mu)$ and $|\mu|-|\lm| = 1$.

Let $A$ be a set. A Young tableau of shape $\lm \in \Par$ in alphabet $A$ is a map
$$
T : D(\lm) \rightarrow A.
$$
Unless otherwise stated, we always fix $n \geq 2$ and take $A = [1,n]$ as the alphabet. When considering Young tableaux, each element of the alphabet is referred to as a letter. For a Young tableau $T$ of shape $\lm \in \Par$ and $(i,j) \in D(\lm)$, the letter $T(i,j)$ is referred to as the $(i,j)$-th entry of $T$. The partition $\lm$ is referred to as the shape of $T$, and is denoted by $\sh(T)$. The size of $T$ is defined to be $|\lm|$, and is denoted by $|T|$. For a subset $A \subset [1,n]$, let $T|_A$ denote the map
$$
T|_A : \{ (i,j) \in D(\lm) \mid T(i,j) \in A \} \rightarrow A;\ (i,j) \mapsto T(i,j).
$$

Given $n \geq d_1 \geq d_2 \geq \cdots \geq d_l > 0$ and $C_j = (c_{1,j},c_{2,j},\ldots,c_{d_j,j}) \in [1,n]^{d_j}$, let $C_1 C_2 \cdots C_l$ denote the Young tableau given by
$$
(C_1 C_2 \cdots C_l)(i,j) := c_{i,j}.
$$

A sequence of letters is referred to as a word. Let $\clW$ denote the set of words, i.e.,
$$
\clW := \bigsqcup_{d \geq 0} [1,n]^d.
$$
For two words $w_1,w_2 \in \clW$, let $w_1 * w_2 \in \clW$ denote the concatenation of them. The column reading of a Young tableau $T$ is a word $\CR(T) \in \clW$ defined to be
$$
(t_{d_1,1}, t_{d_1-1,1}, \ldots, t_{1,1}) * (t_{d_2,2}, t_{d_2-1,2}, \ldots, t_{1,2}) * \cdots * (t_{d_{\lm_1},\lm_1}, t_{d_{\lm_1}-1,\lm_1}, \ldots, t_{1,\lm_1}),
$$
where
$$
d_j := \max\{ i \mid \lm_i \geq j \}
$$
denotes the length of the $j$-th column of $D(\lm)$.

\begin{ex}\normalfont\label{Example of column reading}
Let $\lm = (4,2,1)$, and
$$
T = \ytableausetup{centertableaux}
\begin{ytableau}
1 & 2 & 3 & 3\\
2 & 3 \\
4
\end{ytableau}.
$$
Then, its column reading is
$$
\CR(T) = (4,2,1) * (3,2) * (3) * (3) = (4,2,1,3,2,3,3).
$$
\end{ex}

A Young tableau is said to be semistandard if its entries weakly increase along the rows from left to right, and strongly increase along the columns from top to bottom. Let $\SST_n(\lm)$ denote the set of semistandard Young tableaux of shape $\lm \in \Par$. Note that $\SST_n(\lm) = \emptyset$ unless $\ell(\lm) \leq n$.

A semistandard Young tableau of shape $\lm \in \Par$ in alphabet $[1,N]$ for some $N > 0$ is said to be standard if the assignment $(i,j) \mapsto T(i,j)$ is injective. Let $\STab_N(\lm)$ denote the set of standard Young tableaux of shape $\lm$ in alphabet $[1,N]$.

For a semistandard Young tableau $T \in \SST_n(\lm)$ and a letter $l \in [1,n]$, let $(T \leftarrow l)$ denote the semistandard Young tableau obtained by Schensted's row insertion algorithm (see e.g., \cite[Chapter 7.1]{BS17} for a precise definition).

\begin{ex}\normalfont
Let $T$ be as in Example \ref{Example of column reading}. Then, we have
$$
(T \leftarrow 1) = \ytableausetup{centertableaux}
\begin{ytableau}
1 & 1 & 3 & 3 \\
2 & 2 \\
3 \\
4
\end{ytableau}, \ (T \leftarrow 2) = \ytableausetup{centertableaux}
\begin{ytableau}
1 & 2 & 2 & 3 \\
2 & 3 & 3 \\
4
\end{ytableau}, \ (T \leftarrow 3) = \ytableausetup{centertableaux}
\begin{ytableau}
1 & 2 & 3 & 3 & 3 \\
2 & 3 \\
4
\end{ytableau}.
$$
\end{ex}

Let us recall the Robinson-Schensted correspondence. Fix a word $w = (w_1,\ldots,w_d) \in \clW$. For each $k \in [0,d]$, define a semistandard Young tableau $P^k$ inductively by
$$
P^k := \begin{cases}
\emptyset \qu & \IF k = 0, \\
(P^{k-1} \leftarrow w_k) \qu & \IF k > 0.
\end{cases}
$$
Let $\lm^k $ denote the shape of $P^k$. Note that we have $\lm^{k-1} \triangleleft \lm^k$. Define a standard Young tableau $Q^k \in \STab_k(\lm^k)$ inductively by $Q^0 := \emptyset$, and
$$
Q^k(i,j) := \begin{cases}
Q^{k-1}(i,j) \qu & \IF (i,j) \in D(\lm^{k-1}), \\
k \qu & \IF (i,j) \in D(\lm^k) \setminus D(\lm^{k-1})
\end{cases}
$$
for $k \in [1,d]$. The tableaux $P^d$ and $Q^d$, and the partition $\lm^d$ are referred to as the P-symbol, the Q-symbol, and the shape of $w$, and are denoted by $P(w)$, $Q(w)$, and $\sh(w)$, respectively. The assignment
$$
\RS : \clW \rightarrow \bigsqcup_{\lm \in \Par_n} \SST_n(\lm) \times \STab_{|\lm|}(\lm);\ w \mapsto (P(w),Q(w))
$$
is called the Robinson-Schensted correspondence. As is well-known, $\RS$ is bijective.

\begin{ex}\normalfont
Let $w = (4,2,3,1,3,2)$. Then, $P^k$ and $Q^k$, $k \in [0,6]$ are as follows:
\begin{align}
\begin{split}
&P^0 = \emptyset,\ \ytableausetup{centertableaux}
\begin{ytableau}
4
\end{ytableau},\ \ytableausetup{centertableaux}
\begin{ytableau}
2 \\
4
\end{ytableau},\ \ytableausetup{centertableaux}
\begin{ytableau}
2 & 3 \\
4
\end{ytableau},\ \ytableausetup{centertableaux}
\begin{ytableau}
1 & 3 \\
2 \\
4
\end{ytableau},\ \ytableausetup{centertableaux}
\begin{ytableau}
1 & 3 & 3 \\
2 \\
4
\end{ytableau},\ \ytableausetup{centertableaux}
\begin{ytableau}
1 & 2 & 3 \\
2 & 3 \\
4
\end{ytableau} = P^6 = P(w), \\
&Q^0 = \emptyset,\ \ytableausetup{centertableaux}
\begin{ytableau}
1
\end{ytableau},\ \ytableausetup{centertableaux}
\begin{ytableau}
1 \\
2
\end{ytableau},\ \ytableausetup{centertableaux}
\begin{ytableau}
1 & 3 \\
2
\end{ytableau},\ \ytableausetup{centertableaux}
\begin{ytableau}
1 & 3 \\
2 \\
4
\end{ytableau},\ \ytableausetup{centertableaux}
\begin{ytableau}
1 & 3 & 5 \\
2 \\
4
\end{ytableau},\ \ytableausetup{centertableaux}
\begin{ytableau}
1 & 3 & 5 \\
2 & 6 \\
4
\end{ytableau} = Q^6 = Q(w).
\end{split} \nonumber
\end{align}
\end{ex}

\subsection{$\frgl$-Crystal}
Combinatorics which have been introduced so far are closely related to the representation theory of the general linear algebra $\frgl_n = \frgl_n(\C)$ via the theory of crystals.

A $\frgl_n$-crystal (we omit the subscript ``$n$'' when there is no confusion) is a set $\clB$ equipped with structure maps
$$
\Etil_i,\Ftil_i : \clB \rightarrow \clB \sqcup \{0\}, \qu \vep_i,\vphi_i : \clB \rightarrow \Z \sqcup \{-\infty\}, \qu i \in [1,n-1],
$$
where $0$ is a formal symbol, and
$$
\wt : \clB \rightarrow \Z^n
$$
satisfying certain axioms (see e.g., \cite[Definition 2.13]{BS17}). Every $\frgl$-crystal appearing in this paper is a Stembridge crystal in the sense of \cite[Chapter 4.2]{BS17}. Among the axioms of (Stembridge) crystals, what are particularly important for us are the following: Let $b,b' \in \clB$, $i,j \in [1,n-1]$. Then, the following hold.
\begin{enumerate}
\item $\Ftil_i b = b'$ if and only if $b = \Etil_i b'$.
\item $\vep_i(b) := \max\{ k \mid \Etil_i^k b \neq 0 \}$.
\item $\vphi_i(b) := \max\{ k \mid \Ftil_i^k b \neq 0 \}$.
\item If $|i-j| > 1$ and $\Etil_i b \neq 0$, then $\vep_j(\Etil_i b) = \vep_j(b)$, $\vphi_j(\Etil_i b) = \vphi_j(b)$, and $\Etil_j \Etil_i b = \Etil_i \Etil_j b$.
%\item If $|i-j| > 2$ and $\Ftil_i b \neq 0$, then $\vep_j(\Ftil_i b) = \vep_j(b)$, $\vphi_j(\Ftil_i b) = \vphi_j(b)$, and $\Ftil_j \Ftil_i b = \Ftil_i \Ftil_j b$.
\item If $|i-j| = 1$ and $\Etil_i b \neq 0$, then either $\vep_j(\Etil_i b) = \vep_j(b)$ and $\vphi_j(\Etil_i b) = \vphi_j(b)-1$, or $\vep_j(\Etil_i b) = \vep_j(b)+1$ and $\vphi_j(\Etil_j b) = \vphi_j(b)$.
%\item If $|i-j| = 1$ and $\Ftil_i b \neq 0$, then either $\vep_j(\Ftil_i b) = \vep_j(b)$ and $\vphi_j(\Ftil_i b) = \vphi_j(b)+1$, or $\vep_j(\Ftil_i b) = \vep_j(b)-1$ and $\vphi_j(\Ftil_j b) = \vphi_j(b)$.
\end{enumerate}
Here, we set $\Etil_i 0 = 0$ and $\Ftil_i 0 = 0$.

Let $\clB_1,\clB_2$ be $\frgl$-crystals. A morphism $\psi : \clB_1 \rightarrow \clB_2$ of $\frgl$-crystals is a map $\psi : \clB_1 \sqcup \{0\} \rightarrow \clB_2 \sqcup \{0\}$ such that
\begin{align}
\begin{split}
&\psi(0) = 0,\ \psi(\Etil_i b) = \Etil_i \psi(b), \  \psi(\Ftil_i b) = \Ftil_i \psi(b), \\
&\vep_i(\psi(b')) = \vep_i(b'), \  \vphi_i(\psi(b')) = \vphi_i(b'), \  \wt(\psi(b')) = \wt(b')
\end{split} \nonumber
\end{align}
for all $b,b' \in \clB_1$ and $i \in [1,n-1]$ such that $\psi(b') \neq 0$. A morphism of $\frgl$-crystals is said to be an isomorphism if the underlying map $\clB_1 \sqcup \{0\} \rightarrow \clB_2 \sqcup \{0\}$ is bijective.

The character $\ch_{\frgl} \clB$ of $\clB$ is a Laurent polynomial in $n$ variables $x_1,\ldots,x_n$ given by
$$
\ch_{\frgl} \clB := \sum_{b \in \clB} \bfx^{\wt(b)} \in \Z[x_1^{\pm 1},\ldots,x_n^{\pm 1}],
$$
where $\bfx^{(a_1,\ldots,a_n)} := x_1^{a_1} \cdots x_n^{a_n}$.

The crystal graph of $\clB$ is a colored directed graph defined as follows. The vertex set is $\clB$. For each $b,b' \in \clB$ and $i \in [1,n-1]$, there is an $i$-colored arrow from $b$ to $b'$ if and only if $b' = \Ftil_i(b)$.

\begin{ex}\normalfont
The set $\SST_n(1)$ is equipped with a $\frgl$-crystal structure as follows:
$$
\wt\left( \Cbox{j} \right) := \eps_j, \qu \Etil_i \Cbox{j} := \begin{cases}
\Cbox{\scriptstyle j-1} & \IF j = i+1, \\
0 & \OW,
\end{cases} \qu \Ftil_i \Cbox{j} := \begin{cases}
\Cbox{\scriptstyle j+1} & \IF j = i, \\
0 & \OW,
\end{cases}
$$
where
$$
\eps_j := (\overbrace{0,\ldots,0}^{j-1},1,0,\ldots,0) \in \Z^n.
$$
Then, we have
$$
\ch_{\frgl} \SST_n(1) = x_1 + x_2 + \cdots + x_n.
$$
The crystal graph of $\SST_n(1)$ is
$$
\Cbox{1} \xrightarrow[]{1} \Cbox{2} \xrightarrow[]{2} \Cbox{3} \xrightarrow[]{3} \cdots \xrightarrow[]{n-1} \Cbox{n}.
$$
\end{ex}

Let $\clB_1,\clB_2$ be $\frgl$-crystals. Then, $\clB_1 \otimes \clB_2 := \clB_1 \times \clB_2$ is equipped with a $\frgl$-crystal structure as follows (cf. \cite[Section 2.3]{BS17}): It is customary to denote $(b_1,b_2) \in \clB_1 \otimes \clB_2$ by $b_1 \otimes b_2$. For each $i \in [1,n-1]$, we set
\begin{align}
\begin{split}
&\wt(b_1 \otimes b_2) := \wt(b_1) + \wt(b_2), \\
&\Ftil_i(b_1 \otimes b_2) := \begin{cases}
\Ftil_i(b_1) \otimes b_2 \qu & \IF \vep_i(b_1) \geq \vphi_i(b_2), \\
b_1 \otimes \Ftil_i(b_2) \qu & \IF \vep_i(b_1) < \vphi_i(b_2),
\end{cases} \\
&\Etil_i(b_1 \otimes b_2) := \begin{cases}
\Etil_i(b_1) \otimes b_2 \qu & \IF \vep_i(b_1) > \vphi_i(b_2), \\
b_1 \otimes \Etil_i(b_2) \qu & \IF \vep_i(b_1) \leq \vphi_i(b_2).
\end{cases}
\end{split} \nonumber
\end{align}
Here, we set $0 \otimes b_2 = b_1 \otimes 0 = 0$.
This definition is consistent with \cite{BS17}, and opposite to \cite{Ka90}. The tensor product of $\frgl$-crystals is associative;
$$
(\clB_1 \otimes \clB_2) \otimes \clB_3 = \clB_1 \otimes (\clB_2 \otimes \clB_3).
$$

For each $d \geq 0$, we can construct a $\frgl$-crystal $\SST_n(1)^{\otimes d}$ inductively by
$$
\SST_n(1)^{\otimes d} := \SST_n(1)^{\otimes d-1} \otimes \SST_n(1),
$$
where, $\SST_n(1)^{\otimes 0} = \{ \emptyset \}$ with $\frgl$-crystal structure given by
$$
\wt(\emptyset) = (0,\ldots,0), \qu \Etil_i(\emptyset) = 0 = \Ftil_i(\emptyset).
$$
By associativity of the tensor product of $\frgl$-crystals, we see that
$$
\SST_n(1)^{\otimes d_1} \otimes \SST_n(1)^{\otimes d_2} = \SST_n(1)^{\otimes d_1+d_2}.
$$

Recall that $\clW = \bigsqcup_{d \geq 0} [1,n]^d$ denotes the set of words. By identifying each word $(w_1,w_2,\ldots,w_d) \in \clW$ with
$$
\Cbox{w_1} \otimes \Cbox{w_2} \otimes \cdots \otimes \Cbox{w_d} \in \SST_n(1)^{\otimes d},
$$
one can equip $\clW$ with a $\frgl$-crystal structure. Then, the concatenation of words is identical to the tensor product of $\frgl$-crystals.

Let $\lm \in \Par_n$. Then, $\SST_n(\lm)$ is equipped with a $\frgl$-crystal structure as follows: For each $T \in \SST_n(\lm)$ and $i \in [1,n-1]$, we set
$$
\wt(T) := \wt(\CR(T)), \qu \Etil_i T := P(\Etil_i \CR(T)), \qu \Ftil_i T := P(\Ftil_i \CR(T)),
$$
where we set $P(0) = 0$. For example, the crystal graph of $\SST_3(2,1)$ is as follows.
\begin{align}\label{eq: crystal graph of 2,1}
  \xymatrix@R=10pt{
    & \text{$\ytableausetup{centertableaux}
    \begin{ytableau}
      1 & 1 \\
      2
    \end{ytableau}$} \ar[dl]_1 \ar[dr]^2 & \\
    \text{$\ytableausetup{centertableaux}
    \begin{ytableau}
      1 & 2 \\
      2
    \end{ytableau}$} \ar[d]_2 & & \text{$\ytableausetup{centertableaux}
    \begin{ytableau}
      1 & 1 \\
      3
    \end{ytableau}$} \ar[d]^1 \\
    \text{$\ytableausetup{centertableaux}
    \begin{ytableau}
      1 & 3 \\
      2
    \end{ytableau}$} \ar[d]_2 & & \text{$\ytableausetup{centertableaux}
    \begin{ytableau}
      1 & 2 \\
      3
    \end{ytableau}$} \ar[d]^1 \\
    \text{$\ytableausetup{centertableaux}
    \begin{ytableau}
      1 & 3 \\
      3
    \end{ytableau}$} \ar[dr]_1 & & \text{$\ytableausetup{centertableaux}
    \begin{ytableau}
      2 & 2 \\
      3
    \end{ytableau}$} \ar[dl]^2 \\
    & \text{$\ytableausetup{centertableaux}
    \begin{ytableau}
      2 & 3 \\
      3
    \end{ytableau}$}. &
  }
\end{align}
Then, the Robinson-Schensted correspondence
$$
\RS : \clW \rightarrow \bigsqcup_{\lm \in \Par_n} \SST_n(\lm) \times \STab_{|\lm|}(\lm);\ w \mapsto (P(w),Q(w))
$$
is an isomorphism of $\frgl$-crystals (cf. \cite[Section 8.3]{BS17}), where the $\frgl$-crystal structure of $\SST_n(\lm) \times \STab_{|\lm|}(\lm)$ is given by
$$
\wt(P,Q) := \wt(P), \qu \Etil_i(P,Q) := (\Etil_i P,Q), \qu \Ftil_i(P,Q) := (\Ftil_i P,Q).
$$

Given partitions $\lm,\mu \in \Par_n$ and tableaux $T \in \SST_n(\lm)$, $S \in \SST_n(\mu)$, define the $P$-symbol $P(T \otimes S)$ of $T \otimes S$ to be the $P$-symbol of $\CR(T) * \CR(S)$. For example, we have
$$
P\left( \ytableausetup{centertableaux}
\begin{ytableau}
1 & 3 \\
3
\end{ytableau} \otimes \ytableausetup{centertableaux}
\begin{ytableau}
1 & 2 \\
2 & 3 \\
4
\end{ytableau} \right) = P((3,1,3)*(4,2,1,3,2)) = \ytableausetup{centertableaux}
\begin{ytableau}
1 & 1 & 2 \\
2 & 3 & 3 \\
3 & 4
\end{ytableau}.
$$

For a later use, we put here an easy observation.

\begin{lem}\label{observation of C1C2}
Let $k,l \in [1,n]$, $1 \leq i_1 < \cdots < i_k \leq n$, and $1 \leq j_1 < \cdots < j_l \leq n$, and set
$$
C_1 := P(i_k,\ldots,i_1) = \ytableausetup{centertableaux}
\begin{ytableau}
i_1 \\
\vdots \\
i_k
\end{ytableau}, \qu C_2 := P(j_l,\ldots,j_1) = \ytableausetup{centertableaux}
\begin{ytableau}
j_1 \\
\vdots \\
j_l
\end{ytableau},
$$
and $\lm := \sh(P(C_1 \otimes C_2))$. Then, we have $\ell(\lm) \geq k$. Furthermore, the following are equivalent:
\begin{enumerate}
\item $\ell(\lm) = k$.
\item $k \geq l$ and $i_r \leq j_r$ for all $r \in [1,l]$.
\item $P(C_1 \otimes C_2) = C_1C_2$.
\end{enumerate}
\end{lem}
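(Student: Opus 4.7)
My approach is to route everything through Greene's theorem, which identifies $\ell(\lambda) = \lambda_1'$ with the maximum length of a strictly decreasing subsequence of the word $w := \CR(C_1) * \CR(C_2) = (i_k,\ldots,i_1,j_l,\ldots,j_1)$. The preliminary inequality $\ell(\lambda) \geq k$ follows immediately because $i_k > i_{k-1} > \cdots > i_1$ is already a strictly decreasing subsequence of $w$ of length $k$.

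For the equivalence I would proceed in the cyclic order $(2) \Rightarrow (3) \Rightarrow (1) \Rightarrow (2)$. The step $(2) \Rightarrow (3)$ is essentially tautological: the hypotheses $k \geq l$ and $i_r \leq j_r$ for all $r \in [1,l]$ are exactly the conditions for $C_1 C_2$ to be a well-defined semistandard tableau of shape $(2^l,1^{k-l})$; then the standard fact $P(\CR(T)) = T$ for any semistandard $T$ (as recorded in \cite{BS17}), together with the obvious identity $\CR(C_1 C_2) = \CR(C_1) * \CR(C_2)$, yields $P(C_1 \otimes C_2) = C_1 C_2$. The step $(3) \Rightarrow (1)$ is immediate since $\ell(\sh(C_1 C_2)) = k$.

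The nontrivial implication is $(1) \Rightarrow (2)$, which I would handle by contrapositive. If (2) fails, either (a) $k < l$, in which case $j_l > j_{l-1} > \cdots > j_1$ is a strictly decreasing subsequence of $w$ of length $l > k$, or (b) $k \geq l$ but $i_r > j_r$ for some $r \in [1,l]$. In case (b) I would concatenate the tail $i_k > i_{k-1} > \cdots > i_r$ of $\CR(C_1)$ with the head $j_r > j_{r-1} > \cdots > j_1$ of $\CR(C_2)$; the very inequality $i_r > j_r$ is what makes the join strictly decreasing, producing a subsequence of $w$ of length $(k-r+1)+r = k+1$. Either way Greene's theorem forces $\ell(\lambda) > k$, contradicting (1). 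The only real subtlety is choosing the correct splice point in case (b), but identifying this point with the row-failure index makes the argument essentially automatic.
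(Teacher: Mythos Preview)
Your argument is correct. The paper records this lemma merely as ``an easy observation'' and supplies no proof, so there is nothing to compare against; your use of Greene's theorem to identify $\ell(\lambda)$ with the length of the longest strictly decreasing subsequence of $w=(i_k,\ldots,i_1,j_l,\ldots,j_1)$ handles both the preliminary inequality and the contrapositive of $(1)\Rightarrow(2)$ cleanly, and the splice in case~(b) at the failure index $r$ is exactly the right choice.
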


Let $\clB$ be a $\frgl$-crystal and $b,b_1,b_2 \in \clB$. We say that $b_1$ and $b_2$ are connected, or $b_2$ is connected to $b_1$ if we have
$$
b_2 = \Xtil_{i_1} \Xtil_{i_2} \cdots \Xtil_{i_r} b_1
$$
for some $\Xtil_{i_1},\ldots,\Xtil_{i_r} \in \{\Etil_i,\Ftil_i \mid i \in [1,n-1] \}$. The connected component $C(b)$ of $\clB$ containing $b$ is defined by
$$
C(b) := \{ b' \in \clB \mid \text{$b'$ is connected to $b$} \}.
$$
We say that $\clB$ is connected if we have $\clB = C(b)$ for some $b \in \clB$.

Let $w,w_1,w_2 \in \clW$. As is well-known, $w_1$ and $w_2$ are connected if and only if their $Q$-symbols coincide. Hence, we have
$$
C(w) = \{ w' \in \clW \mid Q(w') = Q(w) \}.
$$
Under the Robinson-Schensted correspondence, this connected component corresponds to $\SST_n(\lm) \times \{ Q(w) \} \simeq \SST_n(\lm)$.

\subsection{$\AI$-crystal}\label{Subsection AI-crystal}
In this subsection, we introduce the notion of $\AI_{n-1}$-crystals (we omit the subscript ``$n-1$'' when there is no confusion). From now on, we assume that $n \geq 3$, and set
\begin{align}\label{eq: def of m}
  m := \begin{cases}
    \frac{n}{2} \qu & \IF n \in \Z_{\ev}, \\
    \frac{n-1}{2} \qu & \IF n \in \Z_{\odd}.
  \end{cases}
\end{align}

\begin{defi}\label{definition of AI-crystals}\normalfont
An $\AI$-crystal is a set $\clB$ equipped with structure maps $\Btil_i : \clB \rightarrow \clB \sqcup \{0\}$ and $\deg_i : \clB \rightarrow \Z_{\geq 0}$, $i \in [1,n-1]$ satisfying the following axioms: Let $b \in \clB$ and $i,j \in [1,n-1]$.
\begin{enumerate}
%\item $\Btil_i b = 0$ if and only if $\deg_i(b) = 0$.
\item\label{definition of AI-crystals 1} If $\Btil_i b \neq 0$, then $\deg_i(\Btil_i b) = \deg_i(b)$ and $\Btil_i^2 b = b$.
\item\label{definition of AI-crystals 2} If $\Btil_i b \neq 0$ and $|i-j| = 1$, then $\deg_j(\Btil_i b) - \deg_j(b) \in \{ 1,-1\}$.
\item\label{definition of AI-crystals 3} If $\Btil_i b \neq 0$ and $|i-j| > 1$, then $\deg_{j}(\Btil_{i}b) = \deg_{j}(b)$.
\end{enumerate}
\end{defi}

\begin{defi}\normalfont
Let $\clB_1,\clB_2$ be $\AI$-crystals. A morphism $\psi : \clB_1 \rightarrow \clB_2$ of $\AI$-crystals is a map $\psi : \clB_1 \sqcup \{0\} \rightarrow \clB_2 \sqcup \{0\}$ such that
$$
\psi(0) = 0,\ \psi(\Btil_i b) = \Btil_i \psi(b), \  \deg_i(\psi(b)) = \deg_i(b)
$$
for all $b \in \clB_1$ and $i \in [1,n-1]$. A morphism of $\AI$-crystals is said to be an isomorphism if the underlying map is bijective.
\end{defi}

\begin{defi}\normalfont
Let $\clB$ be an $\AI$-crystal. The $\AI$-crystal graph of $\clB$ is a colored (non-directed) graph defined as follows. The vertex set is $\clB$. For each $b,b' \in \clB$ and $i \in [1,n-1]$, there is an $i$-colored edge between $b$ and $b'$ if and only if $b' = \Btil_i(b)$.
\end{defi}

\begin{defi}\normalfont
Let $\clB$ be an $\AI$-crystal and $b,b_1,b_2 \in \clB$. We say that $b_1$ and $b_2$ are connected, or $b_2$ is connected to $b_1$ if we have
$$
b_2 = \Btil_{i_1} \Btil_{i_2} \cdots \Btil_{i_r} b_1
$$
for some $i_1,\ldots,i_r \in [1,n-1]$. We call
$$
C^{\AI}(b) := \{ b' \in \clB \mid \text{$b'$ is connected to $b$} \}
$$
the connected component $C^{\AI}(b)$ of $\clB$ containing $b$. We say that $\clB$ is connected if we have $\clB = C^{\AI}(b)$ for some $b \in \clB$.
\end{defi}

\begin{ex}\normalfont
$\SST_n(1)$ is equipped with an $\AI$-crystal structure such that
$$
\deg_i\left( \Cbox{j} \right) = \begin{cases}
1 \qu & \IF j \in \{ i,i+1 \}, \\
0 \qu & \OW
\end{cases}, \qu \Btil_i \Cbox{j} = \begin{cases}
\Cbox{\scriptstyle j-1} \qu & \IF j = i+1, \\
\\
\Cbox{\scriptstyle j+1} \qu & \IF j = i, \\
\\
0 \qu & \OW.
\end{cases}
$$
Then, its $\AI$-crystal graph is
$$
\xymatrix{
\Cbox{1} \ar@{-}[r]^1 & \Cbox{2} \ar@{-}[r]^2 & \Cbox{3} \ar@{-}[r]^3 & \cdots \ar@{-}[r]^{\scriptstyle n-1} & \Cbox{n}.
}
$$
\end{ex}

\begin{prop}\label{AI-crystal tensor gl-crystal}
Let $\clB_1$ be an $\AI$-crystal and $\clB_2$ a $\frgl$-crystal. Then, $\clB_1 \otimes \clB_2 := \clB_1 \times \clB_2$ is equipped with an $\AI$-crystal structure as follows:
\begin{align}
\begin{split}
&\deg_i(b_1 \otimes b_2) = \begin{cases}
\deg_i(b_1) - \vphi_i(b_2) + \vep_i(b_2) \qu & \IF \deg_i(b_1) > \vphi_i(b_2), \\
\vep_i(b_2) \qu & \IF \vphi_i(b_2) - \deg_i(b_1) \in \Z_{\geq 0, \ev}, \\
\vep_i(b_2)+1 \qu & \IF \vphi_i(b_2) - \deg_i(b_1) \in \Z_{\geq 0, \odd}.
\end{cases} \\
&\Btil_i(b_1 \otimes b_2) = \begin{cases}
\Btil_i b_1 \otimes b_2 \qu & \IF \deg_i(b_1) > \vphi_i(b_2), \\
b_1 \otimes \Etil_i b_2 \qu & \IF \vphi_i(b_2) - \deg_i(b_1) \in \Z_{\geq 0, \ev}, \\
b_1 \otimes \Ftil_i b_2 \qu & \IF \vphi_i(b_2) - \deg_i(b_1) \in \Z_{\geq 0, \odd}.
\end{cases}
\end{split} \nonumber
\end{align}
\end{prop}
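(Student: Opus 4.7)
The plan is to verify each of the three axioms of Definition~\ref{definition of AI-crystals} directly, handling each assertion case by case according to which of (A) $\deg_i(b_1) > \vphi_i(b_2)$, (B) $\vphi_i(b_2) - \deg_i(b_1) \in \Z_{\geq 0,\ev}$, or (C) $\vphi_i(b_2) - \deg_i(b_1) \in \Z_{\geq 0,\odd}$ applies to $b_1\otimes b_2$ with respect to the index $i$. Throughout, I would rely on the standard $\frgl$-crystal identities $\vphi_i(\Etil_i b) = \vphi_i(b)+1$ and $\vep_i(\Etil_i b) = \vep_i(b)-1$ (valid when $\Etil_i b \neq 0$) together with the inverse relation $\Ftil_i \Etil_i b = b$, all consequences of the crystal axioms listed earlier in the section.

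For axiom (1), in case (A) axiom (1) for $\clB_1$ gives $\deg_i(\Btil_i b_1) = \deg_i(b_1)$, so $\Btil_i b_1 \otimes b_2$ still lies in case (A), and the involutivity $\Btil_i^2(b_1\otimes b_2) = b_1 \otimes b_2$ and the preservation of $\deg_i$ follow immediately from the defining formula. In case (B), the identity $\vphi_i(\Etil_i b_2) = \vphi_i(b_2)+1$ shows that $b_1 \otimes \Etil_i b_2$ lies in case (C), so a second application of $\Btil_i$ yields $b_1 \otimes \Ftil_i\Etil_i b_2 = b_1 \otimes b_2$, and a direct computation gives $\deg_i(b_1\otimes\Etil_i b_2) = \vep_i(\Etil_ib_2)+1 = \vep_i(b_2) = \deg_i(b_1\otimes b_2)$. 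Case (C) is symmetric. For axiom (3), the only task is to show that in each of (A), (B), (C) all of the quantities $\deg_j(b_1)$, $\vphi_j(b_2)$, $\vep_j(b_2)$ entering the formula for $\deg_j$ are fixed by whichever of $\Btil_i, \Etil_i, \Ftil_i$ actually acts: in case (A) this is axiom (3) for $\clB_1$, and in cases (B), (C) it is the $\frgl$-crystal axiom that $\Etil_i$ and $\Ftil_i$ preserve $\vep_j$ and $\vphi_j$ when $|i-j|>1$.

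The main obstacle is axiom (2), for which a genuine case analysis is unavoidable. For each of the three cases with respect to $i$, I would subdivide further: in case (A) according to the sign $\epsilon \in \{+1,-1\}$ of $\deg_j(\Btil_i b_1) - \deg_j(b_1)$ supplied by axiom (2) for $\clB_1$; and in cases (B), (C) according to the two alternatives in the Stembridge axiom describing how $\vphi_j, \vep_j$ change under $\Etil_i$, respectively $\Ftil_i$, when $|i-j|=1$. In each of the resulting six branches, I would first determine which of (A), (B), (C) applies to $\Btil_i(b_1\otimes b_2)$ with respect to $j$ (this may \emph{flip}, since those cases are distinguished by the sign and parity of $\vphi_j(b_2) - \deg_j(b_1)$), and then read off $\deg_j$ before and after from the formula. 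The delicate point is precisely that a case-flip between (B) and (C) is accompanied by a change of parity in $\vphi_j(b_2) - \deg_j(b_1)$, and the $+1$ offset built into the case-(C) branch of the definition of $\deg_i$ is exactly what ensures that the overall difference $\deg_j(\Btil_i(b_1\otimes b_2)) - \deg_j(b_1\otimes b_2)$ still lies in $\{+1,-1\}$. Once this parity bookkeeping is carried out across the branches, the verification is routine.
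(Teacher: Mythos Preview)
Your proposal is correct and follows essentially the same approach as the paper's proof: a direct verification of the three axioms by case analysis on (A), (B), (C), using the $\AI$-crystal axioms on $\clB_1$ and the Stembridge/$\frgl$-crystal axioms on $\clB_2$ to track the changes in $\deg_j$, $\vphi_j$, $\vep_j$. The only cosmetic difference is in the bookkeeping for axiom~(2): the paper first lists the six possible joint changes of $(\deg_j(b_1),\vphi_j(b_2),\vep_j(b_2))$ and then regroups them into three cases according to the net change in $\deg_j(b_1)-\vphi_j(b_2)$ (namely $+1$, $-1$, or $0$), whereas you split first by (A), (B), (C) and then by the two Stembridge alternatives; the resulting computations are the same.
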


\begin{proof}
Let us verify that $\clB_1 \otimes \clB_2$ satisfies the axioms of $\AI$-crystal. During the proof, we use axioms of $\frgl$-crystal and $\AI$-crystal without mentioning one by one.
\begin{enumerate}

\item Let $b_1 \in \clB_1, b_2 \in \clB_2$ and $i \in [1,n-1]$ be such that $\Btil_i(b_1 \otimes b_2) \neq 0$. We show that $\deg_i(\Btil_i(b_1 \otimes b_2)) = \deg_i(b_1 \otimes b_2)$ and $\Btil_i^2(b_1 \otimes b_2) = b_1 \otimes b_2$.

First, suppose that $\deg_i(b_1) > \vphi_i(b_2)$. In this case, we have
$$
\Btil_i(b_1 \otimes b_2) = \Btil_i b_1 \otimes b_2.
$$
Since $\deg_i(\Btil_i b_1) = \deg_i(b_1) > \vphi_i(b_2)$, we obtain
\begin{align}
\begin{split}
\deg_i(\Btil_i b_1 \otimes b_2) &= \deg_i(\Btil_i b_1) - \vphi_i(b_2) + \vep_i(b_2) \\
&= \deg_i(b_1) - \vphi_i(b_2) + \vep_i(b_2) = \deg_i(b_1 \otimes b_2),
\end{split} \nonumber
\end{align}
and
$$
\Btil_i(\Btil_i b_1 \otimes b_2) = \Btil_i^2 b_1 \otimes b_2 = b_1 \otimes b_2,
$$
as desired.

Next, suppose that $\vphi_i(b_2)-\deg_i(b_1) \in \Z_{\geq 0, \ev}$. In this case, we have
$$
\Btil_i(b_1 \otimes b_2) = b_1 \otimes \Etil_i b_2.
$$
Since $\vphi_i(\Etil_i b_2) = \vphi_i(b_2) + 1$, we have $\vphi_i(\Etil_i b_2) - \deg_i(b_1) \in \Z_{\geq 0, \odd}$. Hence, we obtain
$$
\deg_i(b_1 \otimes \Etil_i b_2) = \vep_i(\Etil_i b_2)+1 = \vep_i(b_2) = \deg_i(b_1 \otimes b_2),
$$
and 
$$
\Btil_i(b_1 \otimes \Etil_i b_2) = b_1 \otimes \Ftil_i \Etil_i b_2 = b_1 \otimes b_2,
$$
as desired.

Finally, suppose that $\vphi_i(b_2)-\deg_i(b_1) \in \Z_{\geq 0, \odd}$. In this case, we have
$$
\Btil_i(b_1 \otimes b_2) = b_1 \otimes \Ftil_i b_2.
$$
Since $\vphi_i(\Ftil_i b_2) = \vphi_i(b_2) - 1$, we have $\vphi_i(\Ftil_i b_2) - \deg_i(b_1) \in \Z_{\geq 0, \ev}$. Hence, we obtain
$$
\deg_i(b_1 \otimes \Ftil_i b_2) = \vep_i(\Ftil_i b_2) = \vep_i(b_2) + 1 = \deg_i(b_1 \otimes b_2),
$$
and
$$
\Btil_i(b_1 \otimes \Ftil_i b_2) = b_1 \otimes \Etil_i \Ftil_i b_2 = b_1 \otimes b_2,
$$
as desired.

\item Let $b_1 \in \clB_1$, $b_2 \in \clB_2$, $i,j \in [1,n-1]$ be such that $|i-j|=1$ and $\Btil_i (b_1 \otimes b_2) \neq 0$. We show that $\deg_j(\Btil_i(b_1 \otimes b_2)) - \deg_j(b_1 \otimes b_2) \in \{1,-1\}$.

Let us write $\Btil_i(b_1 \otimes b_2) = b'_1 \otimes b'_2$ for some $b'_1 \in \{ b_1,\Btil_i b_1 \} \setminus \{0\}$ and $b'_2 \in \{ b_2,\Etil_i b_2, \Ftil_i b_2 \} \setminus \{0\}$. Then, exactly one of the following holds:
\begin{itemize}
\item $\deg_j(b'_1)-\deg_j(b_1) = 1$, $\vphi_j(b'_2)-\vphi_j(b_2) = 0$, and $\vep_j(b'_2)-\vep_j(b_2) = 0$.
\item $\deg_j(b'_1)-\deg_j(b_1) = -1$, $\vphi_j(b'_2)-\vphi_j(b_2) = 0$, and $\vep_j(b'_2)-\vep_j(b_2) = 0$.
\item $\deg_j(b'_1)-\deg_j(b_1) = 0$, $\vphi_j(b'_2)-\vphi_j(b_2) = -1$, and $\vep_j(b'_2)-\vep_j(b_2) = 0$.
\item $\deg_j(b'_1)-\deg_j(b_1) = 0$, $\vphi_j(b'_2)-\vphi_j(b_2) = 0$, and $\vep_j(b'_2)-\vep_j(b_2) = 1$.
\item $\deg_j(b'_1)-\deg_j(b_1) = 0$, $\vphi_j(b'_2)-\vphi_j(b_2) = 1$, and $\vep_j(b'_2)-\vep_j(b_2) = 0$.
\item $\deg_j(b'_1)-\deg_j(b_1) = 0$, $\vphi_j(b'_2)-\vphi_j(b_2) = 0$, and $\vep_j(b'_2)-\vep_j(b_2) = -1$.
\end{itemize}
Therefore, we have either
$$
\deg_j(b'_1) - \vphi_j(b'_2) \in \{ \deg_j(b_1)-\vphi_j(b_2) \pm 1 \} \AND \vep_j(b'_2) = \vep_j(b_2)
$$
or
$$
\deg_j(b'_1) - \vphi_j(b'_2) = \deg_j(b_1)-\vphi_j(b_2) \AND \vep_j(b'_2) \in \{ \vep_j(b_2) \pm 1 \}.
$$

First, suppose that $\deg_j(b'_1)-\vphi_j(b'_2) = \deg_j(b_1)-\vphi_j(b_2)+1$. Then, we compute as
\begin{align}
\begin{split}
\deg_j&(b'_1 \otimes b'_2) \\
&=\begin{cases}
\deg_j(b'_1)-\vphi_j(b'_2)+\vep_j(b'_2) \qu & \IF \deg_j(b'_1) > \vphi_j(b'_2), \\
\vep_j(b'_2) \qu & \IF \vphi_j(b'_2) - \deg_j(b'_1) \in \Z_{\geq 0, \ev}, \\
\vep_j(b'_2)+1 \qu & \IF \vphi_j(b'_2) - \deg_j(b'_1) \in \Z_{\geq 0, \odd}
\end{cases} \\
&=\begin{cases}
\deg_j(b_1)-\vphi_j(b_2)+1+\vep_j(b_2) \qu & \IF \deg_j(b_1) \geq \vphi_j(b_2), \\
\vep_j(b_2) \qu & \IF \vphi_j(b_2) - \deg_j(b_1) \in \Z_{\geq 0, \odd}, \\
\vep_j(b_2)+1 \qu & \IF \vphi_j(b_2) - \deg_j(b_1) \in \Z_{\geq 0, \ev} \setminus \{0\}
\end{cases} \\
&=\begin{cases}
\deg_j(b_1 \otimes b_2) + 1 \qu & \IF \deg_j(b_1) \geq \vphi_j(b_2), \\
\deg_j(b_1 \otimes b_2)-1 \qu & \IF \vphi_j(b_2) - \deg_j(b_1) \in \Z_{\geq 0, \odd}, \\
\deg_j(b_1 \otimes b_2)+1 \qu & \IF \vphi_j(b_2) - \deg_j(b_1) \in \Z_{\geq 0, \ev} \setminus \{0\}
\end{cases} \\
&\in \{ \deg_j(b_1 \otimes b_2) \pm 1 \}.
\end{split} \nonumber
\end{align}

Next, suppose that $\deg_j(b'_1)-\vphi_j(b'_2) = \deg_j(b_1)-\vphi_j(b_2)-1$. Then, we compute as
\begin{align}
\begin{split}
\deg_j&(b'_1 \otimes b'_2) \\
&=\begin{cases}
\deg_j(b'_1)-\vphi_j(b'_2)+\vep_j(b'_2) \qu & \IF \deg_j(b'_1) > \vphi_j(b'_2), \\
\vep_j(b'_2) \qu & \IF \vphi_j(b'_2) - \deg_j(b'_1) \in \Z_{\geq 0, \ev}, \\
\vep_j(b'_2)+1 \qu & \IF \vphi_j(b'_2) - \deg_j(b'_1) \in \Z_{\geq 0, \odd}
\end{cases} \\
&=\begin{cases}
\deg_j(b_1)-\vphi_j(b_2)-1+\vep_j(b_2) \qu & \IF \deg_j(b_1) > \vphi_j(b_2)+1, \\
\vep_j(b_2) \qu & \IF \vphi_j(b_2) - \deg_j(b_1) \in \Z_{\geq 0, \odd} \sqcup \{-1\}, \\
\vep_j(b_2)+1 \qu & \IF \vphi_j(b_2) - \deg_j(b_1) \in \Z_{\geq 0, \ev}
\end{cases} \\
&=\begin{cases}
\deg_j(b_1 \otimes b_2) - 1 \qu & \IF \deg_j(b_1) > \vphi_j(b_2)+1, \\
\deg_j(b_1 \otimes b_2)-1 \qu & \IF \vphi_j(b_2) - \deg_j(b_1) \in \Z_{\geq 0, \odd} \sqcup \{-1\}, \\
\deg_j(b_1 \otimes b_2)+1 \qu & \IF \vphi_j(b_2) - \deg_j(b_1) \in \Z_{\geq 0, \ev}
\end{cases} \\
&\in \{ \deg_j(b_1 \otimes b_2) \pm 1 \}.
\end{split} \nonumber
\end{align}

Finally, suppose that $\deg_j(b'_1)-\vphi_j(b'_2) = \deg_j(b_1)-\vphi_j(b_2)$. Setting $a := \vep_j(b'_2)-\vep_j(b_2) \in \{\pm 1\}$, we compute as
\begin{align}
\begin{split}
\deg_j&(b'_1 \otimes b'_2) \\
&=\begin{cases}
\deg_j(b'_1)-\vphi_j(b'_2)+\vep_j(b'_2) \qu & \IF \deg_j(b'_1) > \vphi_j(b'_2), \\
\vep_j(b'_2) \qu & \IF \vphi_j(b'_2) - \deg_j(b'_1) \in \Z_{\geq 0, \ev}, \\
\vep_j(b'_2)+1 \qu & \IF \vphi_j(b'_2) - \deg_j(b'_1) \in \Z_{\geq 0, \odd}
\end{cases} \\
&=\begin{cases}
\deg_j(b_1)-\vphi_j(b_2)+\vep_j(b_2)+a \qu & \IF \deg_j(b_1) > \vphi_j(b_2), \\
\vep_j(b_2)+a \qu & \IF \vphi_j(b_2) - \deg_j(b_1) \in \Z_{\geq 0, \ev}, \\
\vep_j(b_2)+a+1 \qu & \IF \vphi_j(b_2) - \deg_j(b_1) \in \Z_{\geq 0, \odd}
\end{cases} \\
&=\deg_j(b_1 \otimes b_2) + a \\
&\in \{ \deg_j(b_1 \otimes b_2) \pm 1 \}.
\end{split} \nonumber
\end{align}
Thus, our claim follows.

\item Let $b_1 \in \clB_1$, $b_2 \in \clB_2$, and $i,j \in [1,n-1]$ be such that $|i-j| > 1$ and $\Btil_i(b_1 \otimes b_2) \neq 0$. We show that $\deg_{j}(\Btil_{i}(b_1 \otimes b_2)) = \deg_j(b_1 \otimes b_2)$.

Let us write $\Btil_i(b_1 \otimes b_2) = b'_1 \otimes b'_2$ for some $b'_1 \in \{ b_1, \Btil_i b_1 \} \setminus \{0\}$ and $b'_2 \in \{ b_2, \Etil_i b_2, \Ftil_i b_2 \} \setminus \{0\}$. Then, we have
$$
\deg_{j}(b'_1) = \deg_j(b_1),\ \vep_j(b'_2) = \vep_j(b_2), \ \vphi_j(b'_2) = \vphi_j(b_2).
$$
This implies that
$$
\deg_j(b'_1 \otimes b'_2) = \deg_j(b_1 \otimes b_2),
$$
as desired.
\end{enumerate}
\end{proof}

\begin{cor}\label{AI-crystal structure on gl-crystal}
Let $\clB$ be a $\frgl$-crystal. Then, $\clB$ is equipped with an $\AI$-crystal structure as follows: For each $b \in \clB$ and $i \in [1,n-1]$, we set
\begin{align}
\begin{split}
&\deg_i(b) := \begin{cases}
\vep_i(b) \qu & \IF \vphi_i(b) \in \Z_{\ev}, \\
\vep_i(b)+1 \qu & \IF \vphi_i(b) \in \Z_{\odd},
\end{cases} \\
&\Btil_i b := \begin{cases}
\Etil_i b \qu & \IF \vphi_i(b) \in \Z_{\ev}, \\
\Ftil_i b \qu & \IF \vphi_i(b) \in \Z_{\odd}.
\end{cases}
\end{split} \nonumber
\end{align}
\end{cor}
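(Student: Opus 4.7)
The plan is to deduce this corollary directly from Proposition \ref{AI-crystal tensor gl-crystal} by taking the first tensor factor to be a trivial $\AI$-crystal. Specifically, let $\clB_0 := \{e\}$ be the one-point set equipped with $\deg_i(e) := 0$ and $\Btil_i(e) := 0$ for every $i \in [1,n-1]$. Since $\Btil_i e = 0$ identically, the three axioms of Definition \ref{definition of AI-crystals} hold vacuously, so $\clB_0$ is an $\AI$-crystal.

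By Proposition \ref{AI-crystal tensor gl-crystal}, the product $\clB_0 \otimes \clB$ then inherits an $\AI$-crystal structure, which I would transport along the canonical bijection $\clB_0 \otimes \clB \to \clB$, $e \otimes b \mapsto b$, to obtain an $\AI$-crystal structure on $\clB$. It remains to verify that the resulting formulas coincide with those in the statement. Substituting $\deg_i(b_1) = 0$ into the three cases of Proposition \ref{AI-crystal tensor gl-crystal}, the first case requires $0 > \vphi_i(b)$, which never occurs for a Stembridge crystal since $\vphi_i \geq 0$. The remaining two cases depend only on the parity of $\vphi_i(b)$, and they collapse to $\deg_i(b) = \vep_i(b)$ with $\Btil_i b = \Etil_i b$ when $\vphi_i(b)$ is even, and to $\deg_i(b) = \vep_i(b) + 1$ with $\Btil_i b = \Ftil_i b$ when $\vphi_i(b)$ is odd, precisely as asserted.

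The only subtle point is the implicit use of $\vphi_i(b) \geq 0$, which is automatic for the Stembridge crystals considered throughout this paper. Should one wish to avoid the tensor-product detour, the three axioms of Definition \ref{definition of AI-crystals} can be checked by hand with comparable ease: axiom (1) follows from the inversion identities $\Ftil_i \Etil_i b = b$ and $\Etil_i \Ftil_i b = b$ combined with the parity flips $\vphi_i(\Etil_i b) = \vphi_i(b)+1$ and $\vphi_i(\Ftil_i b) = \vphi_i(b)-1$, while axioms (2) and (3) are immediate consequences of the Stembridge crystal properties (4), (5) recalled in Section \ref{Section Combinatorics}. I do not anticipate any serious obstacle.
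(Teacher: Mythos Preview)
Your proposal is correct and essentially identical to the paper's own proof: the paper also tensors with the trivial one-point $\AI$-crystal (there realized as $\SST_n(\emptyset)=\{\emptyset\}$) and then identifies $\SST_n(\emptyset)\otimes\clB$ with $\clB$ via Proposition~\ref{AI-crystal tensor gl-crystal}. Your write-up is in fact slightly more detailed, as you explicitly check that the case $\deg_i(b_1)>\vphi_i(b_2)$ is vacuous and that the remaining two cases reduce to the stated formulas.
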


\begin{proof}
Consider a $\frgl$-crystal $\SST_n(\emptyset) = \{ \emptyset \}$. Then, we have an isomorphism
$$
\SST_n(\emptyset) \otimes \clB \rightarrow \clB;\ \emptyset \otimes b \mapsto b
$$
of $\frgl$-crystals. On the other hand, $\SST_n(\emptyset)$ admits an $\AI$-crystal structure given by
$$
\Btil_i(\emptyset) = 0, \qu \deg_i(\emptyset) = 0.
$$
Then, under the identification $\SST_n(\emptyset) \otimes \clB \simeq \clB$, the $\AI$-crystal structure on $\SST_n(\emptyset) \otimes \clB$ given by Proposition \ref{AI-crystal tensor gl-crystal} is the same as the one on $\clB$ given by this corollary. Thus, the proof completes.
\end{proof}

\begin{rem}\normalfont
In the sequel, whenever we regard a $\frgl$-crystal as an $\AI$-crystal, we assume that its $\AI$-crystal structure is given by Corollary \ref{AI-crystal structure on gl-crystal}.
\end{rem}

\begin{ex}\normalfont
  The $\AI_2$-crystal graph of the $\mathfrak{gl}_3$-crystal $\SST_3(2,1)$, whose crystal graph is given in equation \eqref{eq: crystal graph of 2,1}, is as follows:
  \begin{align}\label{eq: AIcrystal graph of 2,1}
    \xymatrix@R=10pt{
      & \text{$\ytableausetup{centertableaux}
      \begin{ytableau}
        1 & 1 \\
        2
      \end{ytableau}$} \ar@{-}[dl]_1 \ar@{-}[dr]^2 & \\
      \text{$\ytableausetup{centertableaux}
      \begin{ytableau}
        1 & 2 \\
        2
      \end{ytableau}$} & & \text{$\ytableausetup{centertableaux}
      \begin{ytableau}
        1 & 1 \\
        3
      \end{ytableau}$} \\
      \text{$\ytableausetup{centertableaux}
      \begin{ytableau}
        1 & 3 \\
        2
      \end{ytableau}$} \ar@{-}[d]_2 & & \text{$\ytableausetup{centertableaux}
      \begin{ytableau}
        1 & 2 \\
        3
      \end{ytableau}$} \ar@{-}[d]^1 \\
      \text{$\ytableausetup{centertableaux}
      \begin{ytableau}
        1 & 3 \\
        3
      \end{ytableau}$} \ar@{-}[dr]_1 & & \text{$\ytableausetup{centertableaux}
      \begin{ytableau}
        2 & 2 \\
        3
      \end{ytableau}$} \ar@{-}[dl]^2 \\
      & \text{$\ytableausetup{centertableaux}
      \begin{ytableau}
        2 & 3 \\
        3
      \end{ytableau}$}. &
    }
  \end{align}
\end{ex}

\begin{prop}\label{degree 0 in Stembridge crystals}
Let $\clB$ be a $\frgl$-crystal, $b \in \clB$, and $i,j \in [1,n-1]$.
\begin{enumerate}
\item\label{degree 0 in Stembridge crystals 1} We have $\Btil_i b = 0$ if and only if $\deg_i(b) = 0$.
\item\label{degree 0 in Stembridge crystals 2} If $|i-j| > 1$ and $\Btil_i b \neq 0$, then we have $\Btil_j \Btil_i b = \Btil_i \Btil_j b$.
\end{enumerate}
\end{prop}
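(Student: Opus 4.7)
The plan is to prove both parts by a short case analysis on the parity of $\vphi_i(b)$ (and, for part (2), also on the parity of $\vphi_j(b)$), reducing everything to the corresponding statements about the Kashiwara operators $\Etil_i$ and $\Ftil_i$ that are built into the $\frgl$-crystal axioms.

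For part (1), I would split into the two cases in the definition of $\Btil_i$ from Corollary \ref{AI-crystal structure on gl-crystal}. If $\vphi_i(b) \in \Z_{\ev}$, then $\Btil_i b = \Etil_i b$ and $\deg_i(b) = \vep_i(b)$; the axiom $\vep_i(b) = \max\{ k \mid \Etil_i^k b \neq 0 \}$ gives $\Etil_i b = 0 \iff \vep_i(b) = 0$, which is exactly what is needed. If $\vphi_i(b) \in \Z_{\odd}$, then $\vphi_i(b) \geq 1$ forces $\Ftil_i b \neq 0$, so $\Btil_i b = \Ftil_i b \neq 0$, while $\deg_i(b) = \vep_i(b) + 1 \geq 1 \neq 0$; both sides of the equivalence are simultaneously false.

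For part (2), assume $|i-j|>1$ and $\Btil_i b \neq 0$. Write $\Btil_i b = \Xtil_i b$ with $\Xtil_i \in \{\Etil_i, \Ftil_i\}$ determined by the parity of $\vphi_i(b)$. Since $|i-j|>1$, the Stembridge axiom (4) gives $\vep_j(\Xtil_i b) = \vep_j(b)$ and $\vphi_j(\Xtil_i b) = \vphi_j(b)$ (the $\Ftil_i$ case follows from the $\Etil_i$ case by applying the axiom to $\Ftil_i b$). In particular, $\vphi_j(b)$ and $\vphi_j(\Btil_i b)$ have the same parity, so the choice of $\Etil_j$ versus $\Ftil_j$ in the definition of $\Btil_j$ is identical on $b$ and on $\Btil_i b$. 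Similarly, because $|i - j| > 1$ and $\Btil_j b$ either equals $b$ (with zero-valued output excluded) or is a Kashiwara operator applied to $b$, the parity of $\vphi_i(\Btil_j b)$ equals the parity of $\vphi_i(b)$, so $\Btil_i$ picks the same Kashiwara operator on $b$ and on $\Btil_j b$. The commutativity $\Btil_j \Btil_i b = \Btil_i \Btil_j b$ then reduces to one of the four identities $\Etil_j \Etil_i b = \Etil_i \Etil_j b$, $\Etil_j \Ftil_i b = \Ftil_i \Etil_j b$, $\Ftil_j \Etil_i b = \Etil_i \Ftil_j b$, or $\Ftil_j \Ftil_i b = \Ftil_i \Ftil_j b$, all of which follow from Stembridge axiom (4) (the mixed cases by applying the pure $\Etil$-version to a suitable $\Ftil$-image, using that $\Ftil_i$ is the partial inverse of $\Etil_i$).

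The proof is essentially bookkeeping, so I do not anticipate any real obstacle. The only point deserving care is checking part (2) in the subcase where $\Btil_j b = 0$: then both sides must vanish. By part (1), $\Btil_j b = 0$ means $\deg_j(b) = 0$, i.e., $\vep_j(b) = 0$ and $\vphi_j(b) \in \Z_{\ev}$; by the parity-preservation above, the same equalities hold with $b$ replaced by $\Btil_i b$, so $\deg_j(\Btil_i b) = 0$ and hence $\Btil_j \Btil_i b = 0$ by part (1) again, matching $\Btil_i \Btil_j b = \Btil_i 0 = 0$.
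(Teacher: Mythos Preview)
Your proposal is correct and follows essentially the same approach as the paper's own proof: both parts are handled by a case split on the parity of $\vphi_i(b)$ (and $\vphi_j(b)$ for part (2)), reducing to the standard $\frgl$-crystal axioms for $\Etil_i,\Ftil_i$ and their commutation when $|i-j|>1$. The paper streamlines part (2) into a single chain of equalities $\Btil_j\Btil_i b = \Btil_j\Xtil_i b = \cdots = \Btil_i\Btil_j b$ (absorbing the $\Btil_j b = 0$ subcase automatically via the convention $\Xtil_i 0 = 0$), whereas you separate out the parity-preservation checks and the vanishing subcase; this is purely organizational.
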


\begin{proof}
Let us prove the first assertion. Suppose that $\Btil_i b = 0$. We show that $\vphi_i(b) \in \Z_{\ev}$. Otherwise, we have
$$
0 = \Btil_i b = \Ftil_i b,
$$
which implies $\vphi_i(b) = 0$. This is a contradiction. Hence, we obtain $\vphi_i(b) \in \Z_{\ev}$, and consequently,
$$
0 = \Btil_i b = \Etil_i b.
$$
This shows that $\vep_i(b) = 0$, and hence, we have
$$
\deg_i(b) = \vep_i(b) = 0.
$$

Conversely, suppose that $\deg_i(b) = 0$. Then, we must have $\vphi_i(b) \in \Z_{\ev}$; otherwise, $\deg_i(b) = \vep_i(b) + 1 > 0$. Hence, we obtain
$$
0 = \deg_i(b) = \vep_i(b),
$$
and consequently,
$$
\Btil_i b = \Etil_i b = 0,
$$
as desired. This completes the proof of the first assertion.

Now, let us prove the second assertion. Note that for each $X,Y \in \{E,F\}$, we have
$$
\vphi_j(\Xtil_i b) = \vphi_j(b), \qu \vphi_i(\Ytil_j b) = \vphi_i(b),
$$
and
$$
\Ytil_j \Xtil_i b = \Xtil_i \Ytil_j b.
$$
Then, if we write $\Btil_i b = \Xtil_i b$ for some $X \in \{E,F\}$, we compute as
\begin{align}
\begin{split}
\Btil_j \Btil_i b &= \Btil_j \Xtil_i b \\
&= \begin{cases}
\Etil_j \Xtil_i b \qu & \IF \vphi_j(b) \in \Z_{\ev}, \\
\Ftil_j \Xtil_i b \qu & \IF \vphi_j(b) \in \Z_{\odd}
\end{cases} \\
&= \begin{cases}
\Xtil_i \Etil_j b \qu & \IF \vphi_j(b) \in \Z_{\ev}, \\
\Xtil_i \Ftil_j b \qu & \IF \vphi_j(b) \in \Z_{\odd}
\end{cases} \\
&= \Xtil_i \Btil_j b \\
&= \begin{cases}
\Etil_i \Btil_j b \qu & \IF \vphi_i(b) \in \Z_{\ev}, \\
\Ftil_i \Btil_j b \qu & \IF \vphi_i(b) \in \Z_{\odd}
\end{cases} \\
&= \Btil_i \Btil_j b.
\end{split} \nonumber
\end{align}
This implies the assertion.
\end{proof}

\begin{prop}
Let $\clB_1$ be an $\AI$-crystal, and $\clB_2,\clB_3$ be $\frgl$-crystals. Then, we have
$$
(\clB_1 \otimes \clB_2) \otimes \clB_3 = \clB_1 \otimes (\clB_2 \otimes \clB_3).
$$
\end{prop}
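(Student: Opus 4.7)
\emph{Plan.} My task is to verify, for each $b_1 \in \clB_1$, $b_2 \in \clB_2$, $b_3 \in \clB_3$ and $i \in [1,n-1]$, that $\deg_i$ and $\Btil_i$ agree when computed with either bracketing. Both sides of the claimed equality are built by iterating Proposition~\ref{AI-crystal tensor gl-crystal} once, combined with the associativity of the $\frgl$-crystal tensor product noted earlier in the paper.

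The preliminary step is to unwind the $\frgl$-crystal tensor product rule from the excerpt into the standard identities
$$\vphi_i(b_2 \otimes b_3) = \vphi_i(b_2) + \max(0, \vphi_i(b_3) - \vep_i(b_2)), \qu \vep_i(b_2 \otimes b_3) = \vep_i(b_3) + \max(0, \vep_i(b_2) - \vphi_i(b_3)),$$
together with the accompanying recipe: $\Etil_i$ (resp.\ $\Ftil_i$) acts on $b_2$ or on $b_3$ according as $\vep_i(b_2) > \vphi_i(b_3)$ (resp.\ $\vep_i(b_2) \geq \vphi_i(b_3)$) or not.

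The core is a case analysis. Writing $d_1 := \deg_i(b_1)$, $e_k := \vep_i(b_k)$, $f_k := \vphi_i(b_k)$ for $k=2,3$, the LHS requires a nested application of Proposition~\ref{AI-crystal tensor gl-crystal}: first compare $d_1$ with $f_2$ (three cases: $d_1 > f_2$, $f_2 - d_1 \in \Z_{\geq 0, \ev}$, $f_2 - d_1 \in \Z_{\geq 0, \odd}$) to compute $d := \deg_i(b_1 \otimes b_2)$ together with $\Btil_i(b_1 \otimes b_2)$, then compare $d$ with $f_3$ (again three cases). The RHS requires a single application, comparing $d_1$ with $\vphi_i(b_2 \otimes b_3) = f_2 + \max(0, f_3 - e_2)$, after which a resulting $\Etil_i$ or $\Ftil_i$ on $b_2 \otimes b_3$ must be unwound via the $\frgl$-rule, splitting further according as $e_2 \geq f_3$ or $e_2 < f_3$. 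I would organize the work along the two dichotomies $d_1$ vs.\ $f_2$ and $e_2$ vs.\ $f_3$, and in each combination directly compute $\deg_i$ and $\Btil_i$ on both sides and match the outputs.

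The main obstacle is parity bookkeeping. The trichotomy ``$> 0$, $\geq 0$ even, $\geq 0$ odd'' in Proposition~\ref{AI-crystal tensor gl-crystal} is sensitive to $\vphi_i$ modulo $2$, yet the $\frgl$-tensor rule shifts $\vphi_i$ by $\vphi_i(b_3) - \vep_i(b_2)$, which can have either parity. The verification comes down to (i) checking that the parity of $\vphi_i(b_2 \otimes b_3) - d_1$ on the RHS matches the parity of $\vphi_i(b_3) - d$ on the LHS whenever the $\AI$-operator is supposed to touch the third factor, and (ii) checking that the $\Etil_i$-versus-$\Ftil_i$ choice, phrased differently in the $\AI$-rule (by parity of the deficit) and the $\frgl$-rule (by strict versus weak inequality), produces the same element after unwinding $\Etil_i(b_2 \otimes b_3)$ or $\Ftil_i(b_2 \otimes b_3)$. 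Once this is set up, only elementary arithmetic identities such as $(d_1 - f_2 + e_2) - f_3 + e_3 = d_1 - (f_2 + f_3 - e_2) + e_3$ remain, and the delicate spots are precisely the transitions where the active tensor factor switches.
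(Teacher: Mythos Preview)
Your plan is correct and matches precisely what the paper has in mind: the paper itself writes ``The proof is straightforward but long. Hence, we omit it,'' and a direct case analysis comparing $d_1$ against $f_2$ and $e_2$ against $f_3$, with parity bookkeeping as you describe, is exactly that straightforward-but-long computation. There is nothing further to compare.
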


\begin{proof}
The proof is straightforward but long. Hence, we omit it.
\end{proof}

\begin{rem}\normalfont
Let $\clB_1,\clB_2$ be $\frgl$-crystals. Then, we can equip $\clB_1 \otimes \clB_2$ with two $\AI$-crystal structures; one is obtained by regarding $\clB_1$ as an $\AI$-crystal by means of Corollary \ref{AI-crystal structure on gl-crystal} and then by taking tensor product, and the other is obtained by regarding the $\frgl$-crystal $\clB_1 \otimes \clB_2$ as an $\AI$-crystal by means of Corollary \ref{AI-crystal structure on gl-crystal}. These two structures are identical because the former is $(\SST_n(\emptyset) \otimes \clB_1) \otimes \clB_2$, while the latter is $\SST_n(\emptyset) \otimes (\clB_1 \otimes \clB_2)$.
\end{rem}

\begin{cor}\label{AI-crystal morphism otimes 1}
Let $\clB_1,\clB_2$ be $\AI$-crystals, $\clB_3,\clB_4$ be $\frgl$-crystals, $\psi_1 : \clB_1 \rightarrow \clB_2$ a morphism of $\AI$-crystals, and $\psi_2 : \clB_3 \rightarrow \clB_4$ a morphism of $\frgl$-crystals. Then,
$$
\psi_1 \otimes \psi_2 : \clB_1 \otimes \clB_3 \rightarrow \clB_2 \otimes \clB_4;\ b_1 \otimes b_2 \mapsto \psi_1(b_1) \otimes \psi_2(b_2)
$$
is a morphism of $\AI$-crystals. Furthermore, if both $\psi_1$ and $\psi_2$ are isomorphisms, then so is $\psi_1 \otimes \psi_2$.
\end{cor}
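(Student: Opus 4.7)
The plan is to verify directly the three axioms for a morphism of $\AI$-crystals, namely $\psi(0)=0$, $\psi(\Btil_i b)=\Btil_i \psi(b)$, and $\deg_i(\psi(b))=\deg_i(b)$ for $\psi := \psi_1\otimes\psi_2$, using the explicit case-by-case formulas of Proposition \ref{AI-crystal tensor gl-crystal}. The first axiom holds by the convention that $0 \otimes b = b \otimes 0 = 0$ on both sides together with $\psi_1(0) = \psi_2(0) = 0$.

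For the remaining two axioms, I would fix $b_1 \otimes b_3 \in \clB_1 \otimes \clB_3$ and $i \in [1,n-1]$, and set $b'_1 := \psi_1(b_1)$, $b'_3 := \psi_2(b_3)$. The key observation is that the three-case formulas for $\deg_i(b_1 \otimes b_3)$ and $\Btil_i(b_1 \otimes b_3)$ given in Proposition \ref{AI-crystal tensor gl-crystal} depend on $b_1$ only through the integer $\deg_i(b_1)$, and on $b_3$ only through $\vphi_i(b_3)$ and $\vep_i(b_3)$. Since $\psi_1$ is a morphism of $\AI$-crystals we have $\deg_i(b'_1) = \deg_i(b_1)$, and since $\psi_2$ is a morphism of $\frgl$-crystals we have $\vphi_i(b'_3) = \vphi_i(b_3)$ and $\vep_i(b'_3) = \vep_i(b_3)$. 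Hence exactly the same branch of the case distinction is taken for $b_1 \otimes b_3$ and for $b'_1 \otimes b'_3$, and the degrees match on the nose.

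It then remains to check the compatibility of $\Btil_i$ with $\psi$ branch by branch. In the branch $\deg_i(b_1) > \vphi_i(b_3)$ the formula gives $\Btil_i(b_1 \otimes b_3) = \Btil_i(b_1) \otimes b_3$, and applying $\psi$ together with $\psi_1(\Btil_i b_1) = \Btil_i b'_1$ yields the desired identity; in the two remaining branches, where $\Btil_i(b_1 \otimes b_3)$ equals $b_1 \otimes \Etil_i b_3$ or $b_1 \otimes \Ftil_i b_3$, the same check uses $\psi_2(\Etil_i b_3) = \Etil_i b'_3$ and $\psi_2(\Ftil_i b_3) = \Ftil_i b'_3$. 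The one subtle point, which in the $\AI$-crystal setting takes the place of a conventional domain-compatibility check, is that when $\Etil_i b_3 = 0$ (resp.\ $\Ftil_i b_3 = 0$) the target in the corresponding branch is $0$; this is consistent because $\psi_2$ preserves $\vep_i$ and $\vphi_i$, so the same vanishing occurs for $b'_3$.

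For the isomorphism claim, if $\psi_1$ and $\psi_2$ are bijections (on the underlying sets extended by $0$), then the Cartesian product $\psi_1 \otimes \psi_2$ is a bijection whose set-theoretic inverse is $\psi_1^{-1} \otimes \psi_2^{-1}$, and the above argument applied to $\psi_1^{-1}$ and $\psi_2^{-1}$ (which are again morphisms in their respective categories) shows that this inverse is also a morphism of $\AI$-crystals. I do not foresee any real obstacle here; the verification is a routine case analysis whose length is proportional to the three branches of the tensor-product formula, and all nontrivial input is already packaged in Proposition \ref{AI-crystal tensor gl-crystal} and in the definitions of morphisms of $\AI$- and $\frgl$-crystals.
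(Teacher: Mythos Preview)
Your proposal is correct and follows essentially the same approach as the paper's proof: both observe that the tensor-product formulas in Proposition~\ref{AI-crystal tensor gl-crystal} for $\deg_i(b_1 \otimes b_2)$ and $\Btil_i(b_1 \otimes b_2)$ depend only on the quantities $\deg_i(b_1)$, $\vep_i(b_2)$, $\vphi_i(b_2)$, which are preserved by $\psi_1$ and $\psi_2$ respectively, so the same branch is selected on both sides and the morphism properties follow. Your version is simply more explicit in writing out the branch-by-branch check and the isomorphism argument, whereas the paper compresses this into two sentences.
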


\begin{proof}
The assertion follows from Proposition \ref{AI-crystal tensor gl-crystal}. In fact, $\deg_i(b_1 \otimes b_2)$ (resp., $\deg_i(\psi_1(b_1) \otimes \psi_2(b_2))$) is determined by $\deg_i(b_1),\vep_i(b_2),\vphi_i(b_2)$ (resp., $\deg_i(\psi(b_1)),\vep_i(\psi(b_2)),\vphi_i(\psi(b_2))$), and $\Btil_i$ acts on $b_1 \otimes b_2$ (resp., $\psi_1(b_1) \otimes \psi_2(b_2)$) by either $\Btil_i \otimes 1$, $1 \otimes \Etil_i$, or $1 \otimes \Ftil_i$ depending on $\deg_i(b_1),\vep_i(b_2),\vphi_i(b_2)$ (resp., $\deg_i(\psi(b_1)),\vep_i(\psi(b_2)),\vphi_i(\psi(b_2))$).
\end{proof}

%\begin{rem}\normalfont
%If $\psi_2$ is a morphism of $\AI$-crystals, but not of $\frgl$-crystals, then $\psi_1 \otimes \psi_2$ is not always a morphism of $\AI$-crystals. This reflects the fact that the $\imath$quantum group of type AI is a right, but not two-sided, coideal of the quantum group of type A.
%\end{rem}

\begin{defi}\normalfont
Let $\clB$ be an $\AI$-crystal. Define its character $\ch_{\AI} \clB$ to be a Laurent polynomial in $m$ variables $y_1,y_3,\ldots,y_{2m-1}$ given by
$$
\frac{1}{2^m} \sum_{b \in \clB} \sum_{\sigma_1,\sigma_3,\ldots,\sigma_{2m-1} \in \{ +,- \}} y_1^{\sigma_1 \deg_1(b)} y_3^{\sigma_3 \deg_3(b)} \cdots y_{2m-1}^{\sigma_{2m-1} \deg_{2m-1}(b)} \in \Z[y_1^{\pm 1},y_3^{\pm 1}, \ldots, y_{2m-1}^{\pm 1}].
$$
\end{defi}

\begin{ex}\normalfont
We have
$$
\ch_{\AI_2} \SST_3(1) = y_1 + 1 + y_1\inv, \qu \ch_{\AI_{3}} \SST_4(1) = y_1 + y_1\inv + y_3 + y_3\inv.
$$
\end{ex}

Let us explain the meaning of the character of an $\AI$-crystal $\clB$. Assume that $\clB$ has the following property:
\begin{enumerate}
\item For each $b \in \clB$ and $i \in [1,m]$, we have $\Btil_{2i-1}(b) = 0$ if and only if $\deg_{2i-1}(b) = 0$.
\item For each $b \in \clB$ and $i \neq j \in [1,m]$, we have $\Btil_{2i-1} \Btil_{2j-1} b = \Btil_{2j-1} \Btil_{2i-1} b$.
\end{enumerate}
For example, an $\AI$-subcrystal of a $\frgl$-crystal admits this property (see Proposition \ref{degree 0 in Stembridge crystals}). Set $\ol{\clL} := \C \clB$, and extend the maps $\Btil_i$ to linear operators on $\ol{\clL}$. Note that the operators $\Btil_1,\Btil_3,\ldots,\Btil_{2m-1}$ pairwise commute. We say that a vector $u \in \ol{\clL}$ is a weight vector of weight $\nu = (\nu_1,\nu_3,\ldots,\nu_{2m-1}) \in \Z^m$ if it satisfies the following:
\begin{enumerate}
\item $u$ is a linear combination of $b \in \clB$ such that $\deg_{2i-1}(b) = |\nu_{2i-1}|$ for all $i \in [1,m]$.
\item $\Btil_{2i-1}(u) = \begin{cases}
u \qu & \IF \nu_{2i-1} > 0, \\
0 \qu & \IF \nu_{2i-1} = 0, \\
-u \qu & \IF \nu_{2i-1} < 0, \\
\end{cases}$
\end{enumerate}
Let $\ol{\clL}_\nu$ denote the subspace of weight vectors of weight $\nu$. Then, $\ol{\clL}$ admits the weight space decomposition
$$
\ol{\clL} = \bigoplus_{\nu \in \Z^m} \ol{\clL}_\nu.
$$
In fact, if we take a complete set $\clB'$ of representatives for $\clB/{\sim}$ with respect to the equivalence relation given by
$$
b_1 \sim b_2 \text{ if and only if } b_2 \in \{ \Btil_{2i_1-1} \Btil_{2i_2-1} \cdots \Btil_{2i_r-1} b_1 \mid r \in [0,m],\  1 \leq i_1 < \cdots < i_r \leq m \},
$$
then the set
$$
\{ (1 + \sigma_1 \Btil_1)(1 + \sigma_3 \Btil_3) \cdots (1 + \sigma_{2m-1} \Btil_{2m-1})b \mid b \in \clB',\ \deg_{2i-1}(b) = |\nu_{2i-1}| \Forall i \in [1,m] \},
$$
where $\sigma_{2i-1} \in \{+,-\}$ denotes the signature of $\nu_{2i-1}$, forms a basis of $\ol{\clL}_\nu$. Therefore, we have
$$
\ch_{\AI} \clB = \sum_{\nu \in \Z^m} (\dim \ol{\clL}_\nu) \bfy^\nu,
$$
where
$$
\bfy^{(\nu_1,\nu_3,\ldots,\nu_{2m-1})} := y_1^{\nu_1} y_3^{\nu_3} \cdots y_{2m-1}^{\nu_{2m-1}}.
$$

\subsection{K-matrices}
In this subsection, we introduce a family of isomorphisms of $\AI$-crystals. They are closely related to $K$-matrices appearing in the representation theory of $\imath$quantum group of type AI.

Let $k \in [0,n]$ and consider a $\frgl$-crystal $\SST_n(1^k)$, where
$$
1^k := \begin{cases}
(\overbrace{1,\ldots,1}^{k}) \qu & \IF k \neq 0, \\
\emptyset \qu & \IF k = 0.
\end{cases}
$$
For each $1 \leq j_1 < \cdots < j_k \leq n$, set
$$
u_{j_1,\ldots,j_k} := \begin{cases}
\ytableausetup{centertableaux}
\begin{ytableau}
j_1 \\
\vdots \\
j_k
\end{ytableau} \qu & \IF k \neq 0, \\
\\
\emptyset \qu & \IF k = 0.
\end{cases}
$$
Then, we have
$$
\SST_n(1^k) = \begin{cases}
\{ u_{j_1,\ldots,j_k} \mid 1 \leq j_1 < \cdots < j_k \leq n \} \qu & \IF k \neq 0, \\
\{ \emptyset \} \qu & \IF k = 0.
\end{cases}
$$
The $\AI$-crystal structure of $\SST_n(1^k)$ can be easily described as follows.

\begin{lem}\label{lemma for K-matrix}
Let $1 \leq j_1 < \cdots < j_k \leq n$ and $i \in [1,n-1]$. Then, we have
\begin{align}
\begin{split}
&\deg_i(u_{j_1,\ldots,j_k}) = \begin{cases}
1 & \IF |\{ j_1,\ldots,j_k \} \cap \{ i,i+1 \}| = 1 \\
0 & \OW,
\end{cases} \\
&\Btil_i u_{j_1,\ldots,j_k} = \begin{cases}
u_{j_1,\ldots,j_{l-1},i,j_{l+1},\ldots,j_k} & \IF j_{l-1} < i = j_l-1 \Forsome l \in [1,k], \\
u_{j_1,\ldots,j_{l-1},i+1,j_{l+1},\ldots,j_k} & \IF j_l+1 = i+1 <  j_{l+1} \Forsome l \in [1,k], \\
0 \qu & \OW.
\end{cases}
\end{split} \nonumber
\end{align}
\end{lem}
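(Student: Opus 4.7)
The plan is to reduce the lemma to a computation in the underlying $\frgl$-crystal of $\SST_n(1^k)$ and then apply Corollary~\ref{AI-crystal structure on gl-crystal}. The column reading of $u_{j_1,\ldots,j_k}$ is the word $(j_k,j_{k-1},\ldots,j_1)$, and since $T \mapsto \CR(T)$ identifies $\SST_n(1^k)$ with a $\frgl$-subcrystal of $\clW$, the operators $\Etil_i$ and $\Ftil_i$ on $u_{j_1,\ldots,j_k}$ are governed by the usual signature rule applied to this word. Because the entries $j_1<\cdots<j_k$ are strictly increasing, each of the letters $i$ and $i+1$ occurs at most once in $\CR(u_{j_1,\ldots,j_k})$, which keeps the case analysis short.

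First I would compute $\vep_i(u_{j_1,\ldots,j_k})$ and $\vphi_i(u_{j_1,\ldots,j_k})$ by splitting on $|\{j_1,\ldots,j_k\} \cap \{i,i+1\}|$. If this intersection is empty, the signature has no symbols, so $\vep_i = \vphi_i = 0$. If it has size one, the signature has a single uncancelable symbol: a $+$ (giving $\vphi_i = 1$, $\vep_i = 0$) when $i = j_l$ for some $l$, and a $-$ (giving $\vep_i = 1$, $\vphi_i = 0$) when $i+1 = j_l$ for some $l$. If the intersection has size two, say $i = j_l$ and $i+1 = j_{l+1}$, then in the column reading $i+1$ appears strictly to the left of $i$, producing an adjacent $-+$ pattern that cancels in the signature rule of \cite{BS17}, so again $\vep_i = \vphi_i = 0$.

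Next I would feed these values into Corollary~\ref{AI-crystal structure on gl-crystal}, which sets $\deg_i=\vep_i$ when $\vphi_i$ is even and $\deg_i=\vep_i+1$ when it is odd. This immediately gives $\deg_i(u_{j_1,\ldots,j_k})=0$ when $|\{j_1,\ldots,j_k\} \cap \{i,i+1\}| \in \{0,2\}$ and $\deg_i(u_{j_1,\ldots,j_k})=1$ when it equals $1$, matching the claimed formula. The same corollary equates $\Btil_i$ with $\Etil_i$ when $\vphi_i$ is even and with $\Ftil_i$ otherwise; applying the appropriate classical operator to the single column $u_{j_1,\ldots,j_k}$ simply converts the unique occurrence of $i+1$ to $i$ in the first nonzero case, or of $i$ to $i+1$ in the second, yielding the displayed formulas for $\Btil_i$. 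The side inequalities $j_{l-1}<i=j_l-1$ and $j_l+1=i+1<j_{l+1}$ appearing in the statement are automatic, since the replaced letter is the unique element of $\{i,i+1\}$ present among the $j_r$, so the ``missing'' member of $\{i,i+1\}$ sits strictly between $j_{l-1}$ and $j_l$ or between $j_l$ and $j_{l+1}$ as appropriate.

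There is no deep obstacle; the only point requiring attention is keeping the signature convention straight (cancellation of $-+$ pairs, consistent with the tensor product rule of \cite{BS17}) and noticing that the order reversal in the column reading places $i+1$ before $i$, which is precisely what triggers cancellation in the size-two case. Once this is noted, the lemma reduces to a direct four-case verification.
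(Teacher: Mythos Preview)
Your proposal is correct. The paper does not supply a proof of this lemma at all---it is stated as an easy description of the $\AI$-crystal structure on $\SST_n(1^k)$---and your argument is exactly the intended direct verification: compute $\vep_i$ and $\vphi_i$ via the signature rule on the column reading, then feed these into Corollary~\ref{AI-crystal structure on gl-crystal}.
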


\begin{defi}\normalfont
For each $k \in [0,n]$, define a map $K = K^{(k)} : \SST_n(1^k) \rightarrow \SST_n(1^{n-k})$ by
$$
K(u_{j_1,\ldots,j_k}) = u_{j^c_1,\ldots,j^c_{n-k}},
$$
where $\{ j^c_1,\ldots,j^c_{n-k} \} = [1,n] \setminus \{ j_1,\ldots,j_k \}$. 
\end{defi}

\begin{prop}\label{K-matrix is isomorphism}
Let $k \in [0,n]$. Then, $K^{(k)} : \SST_n(1^k) \rightarrow \SST_n(1^{n-k})$ is an isomorphism of $\AI$-crystals with inverse $K^{(n-k)} : \SST_n(1^{n-k}) \rightarrow \SST_n(1^k)$.
\end{prop}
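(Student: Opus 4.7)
The plan is to reduce the proposition to an elementary verification using the explicit description of the $\AI$-crystal structure on $\SST_n(1^k)$ provided by Lemma \ref{lemma for K-matrix}. First, I would observe that $K^{(n-k)} \circ K^{(k)} = \id_{\SST_n(1^k)}$, because taking the complement of a subset of $[1,n]$ twice returns the original set; by symmetry, the same holds for $K^{(k)} \circ K^{(n-k)}$. Hence $K^{(k)}$ is bijective with the claimed inverse, and it suffices to show that $K^{(k)}$ preserves $\deg_i$ and intertwines $\Btil_i$ for all $i \in [1,n-1]$.

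For the preservation of $\deg_i$, the key identity is
$$
|\{j_1,\ldots,j_k\} \cap \{i,i+1\}| + |\{j^c_1,\ldots,j^c_{n-k}\} \cap \{i,i+1\}| = 2.
$$
By Lemma \ref{lemma for K-matrix}, $\deg_i(u_{j_1,\ldots,j_k}) = 1$ iff the left-hand intersection has size $1$, which is equivalent to the right-hand intersection having size $1$, i.e.\ $\deg_i(u_{j^c_1,\ldots,j^c_{n-k}}) = 1$; otherwise both are $0$.

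For the intertwining property $K(\Btil_i u) = \Btil_i K(u)$, I would perform case analysis on the intersection $J \cap \{i,i+1\}$ where $J = \{j_1,\ldots,j_k\}$. When $|J \cap \{i,i+1\}| \in \{0,2\}$, both $\Btil_i u_{j_1,\ldots,j_k}$ and $\Btil_i u_{j^c_1,\ldots,j^c_{n-k}}$ vanish by Lemma \ref{lemma for K-matrix}, since the same condition $|J^c \cap \{i,i+1\}| \in \{2,0\}$ holds for the complement. When $i \in J$ and $i+1 \notin J$, applying $\Btil_i$ to $u_{j_1,\ldots,j_k}$ replaces $i$ by $i+1$ inside $J$, so the complement of the resulting set is obtained from $J^c$ by replacing $i+1$ by $i$; this is precisely $\Btil_i u_{j^c_1,\ldots,j^c_{n-k}}$, as $J^c$ contains $i+1$ but not $i$. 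The symmetric case $i+1 \in J$, $i \notin J$ is handled the same way.

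No step is hard; the main thing to be careful about is matching the two slightly different clauses in the definition of $\Btil_i$ on $\SST_n(1^k)$ (depending on whether $i$ or $i+1$ is the element of $J$ being moved) with the corresponding clauses for $J^c$, where the roles of $i$ and $i+1$ are swapped. Once these correspondences are checked, the three conditions in the definition of a morphism of $\AI$-crystals are verified, and bijectivity is automatic from the involutive character of complementation, completing the proof.
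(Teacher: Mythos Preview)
Your proposal is correct and follows the same approach as the paper: invoke Lemma~\ref{lemma for K-matrix} to verify preservation of $\deg_i$ and the intertwining of $\Btil_i$, together with the observation that complementation is involutive for bijectivity. The paper's proof is considerably more terse, simply asserting that these identities hold ``by Lemma~\ref{lemma for K-matrix}'' without spelling out the case analysis, whereas you have written out the details; but the underlying argument is identical.
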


\begin{proof}
It is clear from the definition that $K^{(k)}$ is a bijection with inverse $K^{(n-k)}$. For each $1 \leq j_1 < \cdots < j_k \leq n$ and $i \in [1,n-1]$, by Lemma \ref{lemma for K-matrix}, we see that
$$
K(\Btil_i u_{j_1,\ldots,j_k}) = \Btil_i K(u_{j_1,\ldots,j_k}),
$$
and
$$
\deg_i(K(u_{j_1,\ldots,j_k})) = \deg_i(u_{j_1,\ldots,j_k}).
$$
Thus, the proof completes.
\end{proof}

\begin{cor}\label{K otimes 1 is AI-isomorphism}
Let $\clB$ be a $\frgl$-crystal. Then, for each $k \in [0,n]$, the map
$$
K \otimes 1 : \SST_n(1^k) \otimes \clB \rightarrow \SST_n(1^{n-k}) \otimes \clB;\ b_1 \otimes b_2 \mapsto K(b_1) \otimes b_2
$$
is an isomorphism of $\AI$-crystals.
\end{cor}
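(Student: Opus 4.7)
The plan is to recognize this as an immediate consequence of the two previously established results: Proposition \ref{K-matrix is isomorphism}, which says that $K^{(k)} : \SST_n(1^k) \to \SST_n(1^{n-k})$ is an isomorphism of $\AI$-crystals, and Corollary \ref{AI-crystal morphism otimes 1}, which says that tensoring an $\AI$-crystal morphism with a $\frgl$-crystal morphism yields an $\AI$-crystal morphism, and that isomorphisms go to isomorphisms.

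Concretely, I would take $\psi_1 := K^{(k)} : \SST_n(1^k) \to \SST_n(1^{n-k})$, an isomorphism of $\AI$-crystals by Proposition \ref{K-matrix is isomorphism}, and $\psi_2 := \id_{\clB} : \clB \to \clB$, which is trivially an isomorphism of $\frgl$-crystals. The map in the statement is exactly $\psi_1 \otimes \psi_2$, so Corollary \ref{AI-crystal morphism otimes 1} applies directly and yields that $K \otimes 1$ is an isomorphism of $\AI$-crystals.

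There is essentially no obstacle: the content of the proof is already packaged in Proposition \ref{K-matrix is isomorphism} and Corollary \ref{AI-crystal morphism otimes 1}. The only thing to note is that the $\AI$-crystal structure on $\SST_n(1^k) \otimes \clB$ and on $\SST_n(1^{n-k}) \otimes \clB$ referenced in the statement is the one provided by Proposition \ref{AI-crystal tensor gl-crystal}, which is precisely the structure with respect to which Corollary \ref{AI-crystal morphism otimes 1} is formulated, so there is no compatibility subtlety to check. Thus the proof reduces to a one-line invocation.
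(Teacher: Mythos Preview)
Your proposal is correct and matches the paper's own proof, which simply invokes Proposition \ref{K-matrix is isomorphism} and Corollary \ref{AI-crystal morphism otimes 1}. There is nothing to add.
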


\begin{proof}
The assertion follows from Proposition \ref{K-matrix is isomorphism} and Corollary \ref{AI-crystal morphism otimes 1}.
\end{proof}

Let $\lm \in \Par_{n}$. For each $j \in [1,\lm_1]$, let $d_j$ denote the length of the $j$-th column of $D(\lm)$. By the definition of the $\frgl$-crystal structure of $\SST_n(\lm)$, there exists an embedding
$$
\SST_n(\lm) \hookrightarrow \SST_n(1^{d_1}) \otimes \SST_n(1^{d_2}) \otimes \cdots \otimes \SST_n(1^{d_{\lm_1}})
$$
of a $\frgl$-crystal which sends $T = C_1 \cdots C_{\lm_1}$ to $C_1 \otimes \cdots \otimes C_{\lm_1}$, where $C_j$ denotes the $j$-th column of $T$. Define a new semistandard Young tableau $K_1(T)$ by
$$
K_1(T) := P(K(C_1) \otimes C_2 \otimes \cdots \otimes C_{\lm_1}).
$$
Then, the following are immediate from the definition of $K_1$ and Corollary \ref{K otimes 1 is AI-isomorphism}.

\begin{prop}\label{Size of K1(T)}
Let $\lm \in \Par_n$ and $T \in \SST_n(\lm)$. Then, we have
$$
|K_1(T)| - |T| = n-2\ell(\lm).
$$
\end{prop}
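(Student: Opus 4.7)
The claim is essentially a direct bookkeeping argument, so my plan is to just count letters carefully. The key inputs are: (i) the first column $C_1$ of $T$ has length $d_1 = \ell(\lambda)$; (ii) by construction $K = K^{(d_1)}$ sends $\SST_n(1^{d_1})$ to $\SST_n(1^{n-d_1})$, so $K(C_1)$ is a column of length $n - \ell(\lambda)$; (iii) Schensted insertion, and hence the $P$-symbol operation, preserves the total number of letters being inserted.

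From these observations I would compute
\begin{align*}
|K_1(T)| &= |P(K(C_1) \otimes C_2 \otimes \cdots \otimes C_{\lambda_1})| \\
 &= |K(C_1)| + |C_2| + \cdots + |C_{\lambda_1}| \\
 &= (n - d_1) + (d_2 + \cdots + d_{\lambda_1}) \\
 &= (n - d_1) + (|T| - d_1) \\
 &= |T| + n - 2\ell(\lambda),
\end{align*}
which rearranges to the desired identity.

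There is essentially no obstacle here: the content of the statement is just that only the first column is altered by $K_1$, and $K$ changes a column of length $k$ into one of length $n-k$. The only tiny point to note explicitly is that $\sum_{j=1}^{\lambda_1} d_j = |\lambda| = |T|$, which is a standard fact about the conjugate partition. Everything else follows from the definition of $K_1$ and the size-preserving nature of $P$.
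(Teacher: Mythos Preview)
Your proof is correct and is exactly the argument the paper has in mind: the paper states this proposition as ``immediate from the definition of $K_1$'' without writing out details, and your letter-counting via $|K(C_1)| = n - d_1$ and the size-preserving nature of the $P$-symbol is the intended justification.
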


\begin{prop}\label{K1 is an AI-crystal morphism}
Let $\lm \in \Par_{n}$ and $T \in \SST_n(\lm)$. Then, for each $i \in [1,n-1]$, we have
$$
\Btil_i(K_1(T)) = K_1(\Btil_i T), \qu \deg_i(K_1(T)) = \deg_i(T).
$$
Here, we set $K_1(0) := 0$.
\end{prop}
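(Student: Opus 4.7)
The plan is to realize $K_1$ as a composition of three maps, each of which is (or restricts to) a morphism of $\AI$-crystals, and then conclude by functoriality. Concretely, let $\clB := \SST_n(1^{d_2}) \otimes \cdots \otimes \SST_n(1^{d_{\lm_1}})$ and consider the diagram
\[
\SST_n(\lm) \xhookrightarrow{\iota} \SST_n(1^{d_1}) \otimes \clB \xrightarrow{K \otimes 1} \SST_n(1^{n-d_1}) \otimes \clB \xrightarrow{P} \bigsqcup_{\mu \in \Par_n} \SST_n(\mu),
\]
where $\iota$ is the $\frgl$-crystal embedding recalled just before the proposition, $K \otimes 1$ is the map of Corollary \ref{K otimes 1 is AI-isomorphism}, and the third map sends a tensor to its $P$-symbol. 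By definition, the composition sends $T = C_1 \cdots C_{\lm_1}$ to $P(K(C_1) \otimes C_2 \otimes \cdots \otimes C_{\lm_1}) = K_1(T)$, so it suffices to check that each of the three arrows respects the $\AI$-crystal structure.

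First, $\iota$ is a morphism of $\frgl$-crystals, so by the remark following Corollary \ref{AI-crystal structure on gl-crystal} it is automatically a morphism of $\AI$-crystals (the $\AI$-structure on either side is obtained from the $\frgl$-structure by the same prescription, and preserving $\Etil_i, \Ftil_i, \vep_i, \vphi_i$ forces $\Btil_i$ and $\deg_i$ to be preserved as well). Second, $K \otimes 1$ is an isomorphism of $\AI$-crystals by Corollary \ref{K otimes 1 is AI-isomorphism} applied with the above choice of $\clB$. Third, the Robinson--Schensted correspondence, recalled earlier, identifies each connected component of $\SST_n(1^{n-d_1}) \otimes \clB$ with a product $\SST_n(\mu) \times \{Q\}$; in particular the projection $b \mapsto P(b)$ is a morphism of $\frgl$-crystals and hence again a morphism of $\AI$-crystals by the same remark.

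Composing the three morphisms of $\AI$-crystals, we obtain a map $\SST_n(\lm) \to \bigsqcup_\mu \SST_n(\mu)$ that commutes with each $\Btil_i$ and preserves each $\deg_i$; since this composition equals $T \mapsto K_1(T)$, the two identities $\Btil_i(K_1(T)) = K_1(\Btil_i T)$ and $\deg_i(K_1(T)) = \deg_i(T)$ follow at once (with the convention $K_1(0) = 0$ reflecting the convention $\psi(0) = 0$ in the definition of a morphism). The only subtle point is the interplay in step one between the $\AI$-structures on the two sides of $\iota$: both are induced from $\frgl$-structures via Corollary \ref{AI-crystal structure on gl-crystal}, and the associativity/consistency discussed in the remark before Corollary \ref{AI-crystal morphism otimes 1} guarantees that the $\AI$-structure on the tensor product $\SST_n(1^{d_1}) \otimes \clB$ inherited from its $\frgl$-structure agrees with the iterated tensor construction we want to use. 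Once this consistency is in hand, the rest of the argument is formal.
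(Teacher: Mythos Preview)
Your proof is correct and follows essentially the same approach as the paper: the paper simply declares the result ``immediate from the definition of $K_1$ and Corollary \ref{K otimes 1 is AI-isomorphism},'' and your argument unpacks precisely what that means, namely that $K_1$ factors as $\iota$, then $K\otimes 1$, then the $P$-symbol, with the outer two maps being $\frgl$-crystal morphisms (hence $\AI$-crystal morphisms via Corollary \ref{AI-crystal structure on gl-crystal}) and the middle one handled by Corollary \ref{K otimes 1 is AI-isomorphism}. The only addition is that you make explicit the role of $\iota$ and of the $P$-symbol projection, which the paper leaves tacit.
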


\subsection{$\AI$-tableaux}
In this subsection, we introduce $\AI$-tableaux, which are central objects in this paper. They provide us many concrete examples of $\AI$-crystals which are not $\frgl$-crystals.

\begin{defi}\normalfont\label{Definition of AI-semistandard}
Let $\lm \in \Par_{n}$ and $T \in \SST_n(\lm)$. We say that $T$ satisfies the $\AI$-condition, or $T$ is an $\AI$-tableau, if it satisfies the following two conditions:
\begin{enumerate}
\item\label{Definition of AI-semistandard 1} $d_1 \leq m$ (see equation \eqref{eq: def of m} for the definition of $m$).
\item\label{Definition of AI-semistandard 2} $t^c_{i,1} \leq t_{i,2}$ for all $i \in [1,d_2]$, where $\{ t^c_{1,1},\ldots,t^c_{n-d_1,1} \} = [1,n] \setminus \{ t_{1,1},\ldots,t_{d_1,1} \}$.
\end{enumerate}
Here, $d_j$ denotes the length of the $j$-th column of $D(\lm)$, and $t_{i,j}$ denotes the $(i,j)$-th entry of $T$. For each $\lm \in \Par_{n}$, let $\SST_n^{\AI}(\lm)$ denote the set of semistandard Young tableaux of shape $\lm$ satisfying the $\AI$-condition.
\end{defi}

\begin{rem}\label{AI-condition in other words}\normalfont
By Lemma \ref{observation of C1C2}, the second condition for $\AI$-tableaux is equivalent to saying that if we write $T = C_1 C_2 \cdots C_{\lm_1}$, where $C_j$ denotes the $j$-th column of $T$, then
$$
K_1(T) = K(C_1) C_2 \cdots C_{\lm_1}.
$$
\end{rem}

\begin{rem}\normalfont
If $T$ satisfies the $\AI$-condition for some $n$, then so does for all $n' \geq n$. However, $\AI$-condition depends on $n$, in general. For example, $\ytableausetup{smalltableaux}
\begin{ytableau}
1 \\
2
\end{ytableau}$ is an $\AI$-tableau when $n \geq 4$, but not when $n = 3$.
\end{rem}

\begin{rem}\normalfont
It is clear that $\SST_n^{\AI}(\lm) = \emptyset$ unless $\ell(\lm) \leq m$.
\end{rem}

\begin{ex}\normalfont
\ \begin{enumerate}
\item Let $n  \geq 3$, $\lm = (l)$, $l \geq 0$. Then, $T \in \SST_n(l)$ is an $\AI$-tableau if and only if $T$ does not begin with $\ytableausetup{centertableaux}
\begin{ytableau}
1 & 1
\end{ytableau}$. For example,
$$
\SST_3^{\AI}(2) = \{ \ytableausetup{centertableaux}
\begin{ytableau}
1 & 2
\end{ytableau},\ \ytableausetup{centertableaux}
\begin{ytableau}
1 & 3
\end{ytableau},\ \ytableausetup{centertableaux}
\begin{ytableau}
2 & 2
\end{ytableau},\ \ytableausetup{centertableaux}
\begin{ytableau}
2 & 3
\end{ytableau},\ \ytableausetup{centertableaux}
\begin{ytableau}
3 & 3
\end{ytableau} \}
$$
\item Let $n \geq 4$, $\lm = (l_1,l_2)$, $l_1 \geq l_2 > 0$. Then, $T \in \SST_n(l_1,l_2)$ is an $\AI$-tableau if and only if the following conditions are satisfied:
\begin{enumerate}
\item The first row of $T$ does not begin with $\ytableausetup{centertableaux}
\begin{ytableau}
1 & 1
\end{ytableau}$.
\item The second row of $T$ does not begin with $\ytableausetup{centertableaux}
\begin{ytableau}
3 & 3
\end{ytableau}$.
\item The first two rows of $T$ does not begin with $
\ytableausetup{centertableaux}
\begin{ytableau}
1 & 2 \\
2
\end{ytableau}$.
\end{enumerate}
For example,
\begin{align}
\begin{split}
\SST_4^{\AI}(2,1) = \{ &\ytableausetup{centertableaux}
\begin{ytableau}
1 & 2 \\
3
\end{ytableau}, \qu \ytableausetup{centertableaux}
\begin{ytableau}
1 & 2 \\
4
\end{ytableau}, \qu \ytableausetup{centertableaux}
\begin{ytableau}
1 & 3 \\
2
\end{ytableau}, \qu \ytableausetup{centertableaux}
\begin{ytableau}
1 & 3 \\
3
\end{ytableau}, \qu \ytableausetup{centertableaux}
\begin{ytableau}
1 & 3 \\
4
\end{ytableau}, \qu \ytableausetup{centertableaux}
\begin{ytableau}
1 & 4 \\
2
\end{ytableau}, \qu \ytableausetup{centertableaux}
\begin{ytableau}
1 & 4 \\
3
\end{ytableau}, \qu \ytableausetup{centertableaux}
\begin{ytableau}
1 & 4 \\
4
\end{ytableau}, \\
&\ytableausetup{centertableaux}
\begin{ytableau}
2 & 2 \\
3
\end{ytableau}, \qu \ytableausetup{centertableaux}
\begin{ytableau}
2 & 2 \\
4
\end{ytableau}, \qu \ytableausetup{centertableaux}
\begin{ytableau}
2 & 3 \\
3
\end{ytableau}, \qu \ytableausetup{centertableaux}
\begin{ytableau}
2 & 3 \\
4
\end{ytableau}, \qu \ytableausetup{centertableaux}
\begin{ytableau}
2 & 4 \\
3
\end{ytableau}, \qu \ytableausetup{centertableaux}
\begin{ytableau}
2 & 4 \\
4
\end{ytableau}, \qu \ytableausetup{centertableaux}
\begin{ytableau}
3 & 3 \\
4
\end{ytableau}, \qu \ytableausetup{centertableaux}
\begin{ytableau}
3 & 4 \\
4
\end{ytableau} \}.
\end{split} \nonumber
\end{align}
\end{enumerate}
\end{ex}

%\begin{lem}
%Let $\lm \in \Par_n$, $T \in \SST_n(\lm)$.
%\begin{enumerate}
%\item Suppose that $\ell(\lm) \geq \frac{n}{2}$. Then, $|T| - |K_1(T)| = 2\ell(\lm)-n \geq 0$.
%\item Suppose that $\ell(\lm) \leq \frac{n}{2}$. Then, we have $|K_1^2(T)| \leq |T|$. Moreover, the equality holds if and only if $T \in \SST_n^{\AI}(\lm)$.
%\end{enumerate}
%\end{lem}
%
%
%\begin{proof}
%The first assertion is clear from the definition of $K_1$. Hence, assume that $\ell(\lm) \leq \frac{n}{2}$. Let $d_j$ denote the length of the $j$-th column of $D(\lm)$. Then, by Lemma \ref{observation of C1C2}, the length of the first column of $\sh(K_1(T))$ is $n-d_1+\alpha$ for some $\alpha \geq 0$; the equality holds if and only if $T \in \SST_n^{\AI}(\lm)$ by Remark \ref{AI-condition in other words}. Since $d_1 = \ell(\lm) \leq \frac{n}{2}$, we have
%$$
%n-d_1+\alpha \geq \frac{n}{2}+\alpha \geq \frac{n}{2}.
%$$
%By the first assertion, we obtain
%$$
%|K_1^2(T)| = |K_1(T)| - (n-2(n-d_1+\alpha)) = |T| - (n-2d_1) + (n-2(n-d_1+\alpha)) = |T|-2\alpha \leq |T|.
%$$
%Here the equality holds if and only if $\alpha = 0$, which is equivalent to $T \in \SST_n^{\AI}(\lm)$. This completes the proof.
%\end{proof}

\begin{lem}\label{Existence of AI-semistandardization}
Let $\lm \in \Par_{n}$ and $T \in \SST_n(\lm)$. Then, there exists $r \geq 0$ such that $K_1^r(T)$ is an $\AI$-tableau.
\end{lem}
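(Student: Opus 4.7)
The plan is to prove the statement by strong induction on $|T|$. The base case $|T| = 0$ is trivial, since the empty tableau vacuously satisfies the $\AI$-condition. For the inductive step, assuming $T$ is not already an $\AI$-tableau, the goal is to exhibit some $r \in \{1, 2\}$ with $|K_1^r(T)| < |T|$; the inductive hypothesis applied to $K_1^r(T)$ then supplies the remaining iterations of $K_1$ needed to reach an $\AI$-tableau.

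Writing $T = C_1 C_2 \cdots C_{\lambda_1}$ by columns with $d_j = \ell(C_j)$, the failure of the $\AI$-condition splits into two cases. In the first case, $d_1 > m$, i.e.\ $d_1 \geq m + 1$, and Proposition \ref{Size of K1(T)} directly yields $|K_1(T)| = |T| + n - 2d_1 < |T|$ (since $n - 2d_1 \leq n - 2m - 2 < 0$); take $r = 1$. In the second case $d_1 \leq m$ and $t^c_{i,1} > t_{i,2}$ for some $i \in [1, d_2]$. Here the plan is to show $|K_1^2(T)| < |T|$; by two applications of Proposition \ref{Size of K1(T)} this is equivalent to the length estimate $\ell(K_1(T)) > n - d_1$, and I take $r = 2$.

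The key step, and the main obstacle, is establishing this length bound $\ell(K_1(T)) > n - d_1$ in the second case. I would first apply Lemma \ref{observation of C1C2} to the pair of columns $K(C_1)$ (of length $n - d_1$) and $C_2$ (of length $d_2$); the chain of inequalities $d_2 \leq d_1 \leq m \leq n - m \leq n - d_1$ secures the hypothesis $n - d_1 \geq d_2$, while the failure of the $\AI$-condition rules out the entrywise comparison $t^c_{r,1} \leq t_{r,2}$, so Lemma \ref{observation of C1C2} forces $\ell(P(K(C_1) \otimes C_2)) > n - d_1$. By associativity of tensor product of $\frgl$-crystals we have $K_1(T) = P(P(K(C_1) \otimes C_2) \otimes C_3 \otimes \cdots \otimes C_{\lambda_1})$, and since each Schensted row-insertion adds exactly one box to the shape, the length of the first column is non-decreasing as we incorporate $C_3, \ldots, C_{\lambda_1}$; hence the strict inequality propagates to $\ell(K_1(T)) > n - d_1$. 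With this estimate the arithmetic identity $|K_1^2(T)| = |T| + 2n - 2d_1 - 2\ell(K_1(T))$ from Proposition \ref{Size of K1(T)} immediately gives $|K_1^2(T)| < |T|$, closing the induction.
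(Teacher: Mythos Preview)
Your proof is correct and follows essentially the same approach as the paper: induction on $|T|$, with the same two-case split (either $d_1 > m$, handled by one application of $K_1$, or $d_1 \leq m$ with the second condition failing, handled by two). You spell out slightly more detail than the paper does in justifying $\ell(K_1(T)) > n - d_1$ --- namely the propagation of the strict inequality from $P(K(C_1) \otimes C_2)$ through the remaining columns via monotonicity of first-column length under insertion --- but this is exactly the content of the paper's appeal to Remark~\ref{AI-condition in other words} and Lemma~\ref{observation of C1C2}.
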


\begin{proof}
We prove by induction on $|T|$. When $T$ is an $\AI$-tableau, there is nothing to prove. Suppose that $T$ is not an $\AI$-tableau. Set $d_1 := \ell(\lm)$. Then, we have two possibilities; (1) $d_1 > m$ or (2) $d_1 \leq m$ and the second condition in Definition \ref{Definition of AI-semistandard} fails. If we are in the first case, then by Proposition \ref{Size of K1(T)}, we have
$$
|K_1(T)| - |T| = n-2d_1 \leq n-2(m+1) < 0.
$$
Hence, the assertion follows from our induction hypothesis.

Now, assume that we are in the second case. Then, by Remark \ref{AI-condition in other words} and Lemma \ref{observation of C1C2}, the length of $\sh(K_1(T))$ must be $n-d_1+\alpha$ for some $\alpha > 0$. Using Proposition \ref{Size of K1(T)}, we compute as
\begin{align}
\begin{split}
|K_1^2(T)| - |T| &= (|K_1^2(T)|-|K_1(T)|)+ (|K_1(T)|-|T|) \\
&= (n-2(n-d_1+\alpha)) + (n-2d_1) \\
&= -2\alpha < 0.
\end{split} \nonumber
\end{align}
Hence, the assertion follows from our induction hypothesis. This completes the proof.
\end{proof}

\begin{lem}\label{K1s on AI-semistandard tableau}
Let $\rho \in \Par_m$, $T \in \SST^{\AI}_n(\rho)$. Then, we have $K_1^2(T) = T$. Furthermore, $K_1(T)$ is an $\AI$-tableau if and only if $\ell(\rho) = \frac{n}{2}$.
\end{lem}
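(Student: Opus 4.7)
The plan is to decompose $T = C_1 C_2 \cdots C_{\rho_1}$ into columns, with $d_j := \ell(C_j)$, and exploit the fact that, since $T$ is an $\AI$-tableau, Remark~\ref{AI-condition in other words} upgrades $K_1(T) = P(K(C_1) \otimes C_2 \otimes \cdots \otimes C_{\rho_1})$ from a $P$-symbol to the literal concatenation $K(C_1) C_2 \cdots C_{\rho_1}$. The involutivity $K \circ K = \id$ granted by Proposition~\ref{K-matrix is isomorphism} will do almost all the work.

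For the first claim, I would first check that the column decomposition of $K_1(T)$ actually starts with $K(C_1)$ in the leftmost (longest) slot: its length is $n - d_1 \geq n - m \geq m \geq d_2$, so it is no shorter than any subsequent column $C_j$ with $j \geq 2$. Applying $K_1$ once more then yields
\[
K_1^2(T) = P\bigl(K(K(C_1)) \otimes C_2 \otimes \cdots \otimes C_{\rho_1}\bigr) = P(C_1 \otimes C_2 \otimes \cdots \otimes C_{\rho_1}) = P(\CR(T)) = T,
\]
using $K \circ K = \id$ and the standard fact that a semistandard tableau is recovered from the $P$-symbol of its column reading.

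For the second claim, I would examine the two conditions in Definition~\ref{Definition of AI-semistandard} applied to $K_1(T) = K(C_1) C_2 \cdots C_{\rho_1}$. Condition~(1) demands that the first column has length $\leq m$, i.e.\ $n - d_1 \leq m$, while $T$ being an $\AI$-tableau already forces $d_1 \leq m$. The conjunction $n - m \leq d_1 \leq m$ has a solution only when $n$ is even and $d_1 = n/2 = m$, which is precisely $\ell(\rho) = n/2$; in particular this automatically excludes the odd case, matching the statement. Under $\ell(\rho) = n/2$, condition~(2) must still be checked: since $K \circ K = \id$, the complement of the length-$m$ column $K(C_1)$ is exactly $C_1$, so the required inequality collapses to semistandardness of $T$ between its first two columns, which already holds.

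I expect the only real subtlety to be the bookkeeping needed to justify that $K_1$ can legitimately be re-applied to $K_1(T)$ by reading off its columns in order. The inequality $n - d_1 \geq d_2$ together with Remark~\ref{AI-condition in other words} make this routine; once that is in hand, both assertions reduce to the involutivity of $K$ and a short comparison between $d_1$, $m$, and $n$.
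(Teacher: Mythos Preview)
Your proposal is correct and follows essentially the same route as the paper: both arguments decompose $T$ into columns, invoke Remark~\ref{AI-condition in other words} to rewrite $K_1(T)$ as the literal concatenation $K(C_1)C_2\cdots C_{\rho_1}$, and then use the involutivity $K\circ K=\id$ to recover $T$ after a second application of $K_1$. For the second assertion, both reduce the question to the length condition $n-d_1\leq m$ combined with $d_1\leq m$; the paper observes via Remark~\ref{AI-condition in other words} and the identity $K_1^2(T)=C_1C_2\cdots C_{\rho_1}$ that condition~(2) for $K_1(T)$ is automatic, while you verify it directly by noting that the complement of $K(C_1)$ is $C_1$, which is an equivalent (and arguably more transparent) computation.
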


\begin{proof}
For each $j \in [1,\lm_1]$, let $d_j$ denote the length of the $j$-th column of $D(\lm)$, and $C_j$ the $j$-th column of $T$. By Remark \ref{AI-condition in other words}, we have
\begin{align}\label{K1 on AI-tableau}
K_1(T) = K(C_1)C_2 \cdots C_{\lm_1}.
\end{align}
Since $T$ is a semistandard Young tableau, by Lemma \ref{observation of C1C2}, we see that
\begin{align}\label{K1 squared T}
K_1^2(T) = K^2(C_1)C_2 \cdots C_{\lm_1} = C_1C_2 \cdots C_{\lm_1} = T.
\end{align}
This implies the first assertion. Combining Remark \ref{AI-condition in other words} and equation \eqref{K1 squared T}, we see that $K_1(T)$ is an $\AI$-tableau if and only if $\ell(\sh(K_1(T))) \leq m$. Since $\ell(\sh(K_1(T))) = n-\ell(\lm)$ by equation \eqref{K1 on AI-tableau}, and since $\ell(\lm) \leq m$, the second assertion follows. This completes the proof.
\end{proof}

\begin{defi}\normalfont
Let $\lm \in \Par_{n}$ and $T \in \SST_n(\lm)$. The $P^{\AI}$-symbol of $T$ is an $\AI$-tableau $\std(T)$ given by
$$
\std(T) := K_1^r(T),
$$
where $r := \min\{ s \mid \text{$K_1^s(T)$ is an $\AI$-tableau} \}$.
\end{defi}

\begin{ex}\normalfont
Let $n = 4$ and $T = \ytableausetup{centertableaux}
\begin{ytableau}
2 & 2 \\
3 & 3
\end{ytableau} \notin \SST_4^{\AI}(2,2)$. Then, we compute as
\begin{align}
\begin{split}
K_1 \left( \ytableausetup{centertableaux}
\begin{ytableau}
2 & 2 \\
3 & 3
\end{ytableau} \right) &= P \left( \ytableausetup{centertableaux}
\begin{ytableau}
1 \\
4
\end{ytableau} \otimes \ytableausetup{centertableaux}
\begin{ytableau}
2 \\
3
\end{ytableau} \right) = \ytableausetup{centertableaux}
\begin{ytableau}
1 & 2 \\
3 \\
4
\end{ytableau} \notin \SST_4^{\AI}(2,1,1), \\
K_1 \left( \ytableausetup{centertableaux}
\begin{ytableau}
1 & 2 \\
3 \\
4
\end{ytableau} \right) &= P \left( \Cbox{2} \otimes \Cbox{2} \right) = \ytableausetup{centertableaux}
\begin{ytableau}
2 & 2
\end{ytableau} \in \SST_4^{\AI}(2).
\end{split} \nonumber
\end{align}
Hence, we obtain $\std(T) = \ytableausetup{centertableaux}
\begin{ytableau}
2 & 2
\end{ytableau}$.
\end{ex}

\begin{rem}\normalfont
$\std(T)$ depends on $n$, in general. For example, we have
$$
\std \left( \ytableausetup{centertableaux}
\begin{ytableau}
1 \\
2
\end{ytableau} \right) = \begin{cases}
\Cbox{3} \qu & \IF n = 3, \\
\\
\ytableausetup{centertableaux}
\begin{ytableau}
1 \\
2
\end{ytableau} \qu & \IF n > 3.
\end{cases}
$$
\end{rem}

\begin{prop}\label{SSTnAI is AI-crystal}
Let $\rho \in \Par_m$. Then, $\SST_n^{\AI}(\rho)$ is an $\AI$-subcrystal of $\SST_n(\rho)$.
\end{prop}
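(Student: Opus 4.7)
The plan is to show that the $\AI$-crystal operators $\Btil_i$ on $\SST_n(\rho)$, inherited from the $\frgl$-crystal structure via Corollary \ref{AI-crystal structure on gl-crystal}, restrict to operators on the subset $\SST_n^{\AI}(\rho)$. I will fix $T \in \SST_n^{\AI}(\rho)$ and $i \in [1,n-1]$ with $\Btil_i T \neq 0$, and show that $\Btil_i T \in \SST_n^{\AI}(\rho)$. Since $\Btil_i$ acts as either $\Etil_i$ or $\Ftil_i$, both of which preserve the shape on $\SST_n(\rho)$, the tableau $\Btil_i T$ will have shape $\rho \in \Par_m$, so condition~(\ref{Definition of AI-semistandard 1}) of Definition \ref{Definition of AI-semistandard} is automatic for $\Btil_i T$.

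The main step will be to verify condition~(\ref{Definition of AI-semistandard 2}) for $\Btil_i T$. The key input is a converse to the first half of Lemma \ref{K1s on AI-semistandard tableau}: if $T' \in \SST_n(\rho)$ satisfies $K_1^2(T') = T'$, then $T'$ is an $\AI$-tableau. This converse is implicit in the proof of Lemma \ref{Existence of AI-semistandardization}, which shows that if such a $T'$ with $\ell(\rho) \leq m$ violates condition~(\ref{Definition of AI-semistandard 2}), then $|K_1^2(T')| < |T'|$, ruling out $K_1^2(T') = T'$. With this in hand, applying Proposition \ref{K1 is an AI-crystal morphism} twice yields
\begin{align*}
K_1^2(\Btil_i T) = K_1(\Btil_i K_1(T)) = \Btil_i K_1^2(T) = \Btil_i T,
\end{align*}
where the last equality uses Lemma \ref{K1s on AI-semistandard tableau} applied to $T$. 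Since $\Btil_i T$ has shape $\rho \in \Par_m$, the converse stated above forces $\Btil_i T$ to satisfy condition~(\ref{Definition of AI-semistandard 2}), so $\Btil_i T \in \SST_n^{\AI}(\rho)$.

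Finally, the $\deg_i$-values are inherited unchanged from the ambient $\frgl$-crystal, so closure of $\SST_n^{\AI}(\rho)$ under each $\Btil_i$ is the only thing required to make it an $\AI$-subcrystal of $\SST_n(\rho)$. The only step that is not a direct invocation of an earlier result is the converse used in the main step; fortunately, this is a routine extraction from the size-comparison argument already present in the proof of Lemma \ref{Existence of AI-semistandardization}, so no substantive obstacle arises.
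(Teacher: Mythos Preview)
Your proof is correct and follows essentially the same approach as the paper: both rely on the commutation of $K_1$ with $\Btil_i$ (Proposition \ref{K1 is an AI-crystal morphism}), the identity $K_1^2(T)=T$ for $\AI$-tableaux (Lemma \ref{K1s on AI-semistandard tableau}), and the size-drop argument from the proof of Lemma \ref{Existence of AI-semistandardization}. The paper packages this via a contradiction using $\std(\Btil_i T)$ and its size, while you extract the cleaner characterization ``$K_1^2(T')=T'$ with $\ell(\sh(T'))\leq m$ implies $T'$ is an $\AI$-tableau'' and apply it directly; the underlying argument is the same.
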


\begin{proof}
Let $T \in \SST_n^{\AI}(\rho)$ and $i \in [1,n-1]$ be such that $\Btil_i T \neq 0$. It suffices to show that $\Btil_i T \in \SST_n^{\AI}(\rho)$. Assume contrary. Then, by the proof of Lemma \ref{Existence of AI-semistandardization}, we have
$$
|\std(\Btil_i T)| < |\Btil_i T| = |T|.
$$
On the other hand, if we write $\std(\Btil_i T) = K_1^r(\Btil_i T)$ for some $r > 0$, then we have
$$
\std(\Btil_i T) = \Btil_i K_1^r(T)
$$
by Proposition \ref{K1 is an AI-crystal morphism}. Hence, we compute as
$$
|\std(\Btil_i T)| = |\Btil_i K_1^r(T)| = |K_1^r(T)| \geq |T|,
$$
where the last inequality follows from Lemma \ref{K1s on AI-semistandard tableau}. Thus, we obtain a contradiction. Hence, the proof completes.
\end{proof}

\begin{prop}\label{PAI-symbol is AI-crystal morphism}
The map
$$
\std : \bigsqcup_{\lm \in \Par_{n}} \SST_n(\lm) \rightarrow \bigsqcup_{\rho \in \Par_{m}}\SST_n^{\AI}(\rho);\ T \mapsto \std(T)
$$
is a morphism of $\AI$-crystals.
\end{prop}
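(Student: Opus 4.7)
The plan is to reduce everything to Proposition \ref{K1 is an AI-crystal morphism}, which says that $K_1$ commutes with $\Btil_i$ and preserves $\deg_i$, combined with Proposition \ref{SSTnAI is AI-crystal}, which says that each $\SST_n^{\AI}(\rho)$ is stable under $\Btil_i$. The only genuine question is whether the number of iterations $r$ of $K_1$ needed to land in $\AI$ is the same for $T$ and for $\Btil_i T$; once this is established, the claim falls out immediately.

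First I would settle the easy piece. Writing $\std(T)=K_1^r(T)$ for the minimal such $r$, iterating Proposition \ref{K1 is an AI-crystal morphism} gives
\[
\deg_i(\std(T)) = \deg_i(K_1^r(T)) = \deg_i(K_1^{r-1}(T)) = \cdots = \deg_i(T).
\]
This handles the degree compatibility for every $T$ and every $i$.

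Next I would treat $\Btil_i$. If $\Btil_i T = 0$, then iterating Proposition \ref{K1 is an AI-crystal morphism} (which also sends $0$ to $0$) yields $\Btil_i \std(T) = \Btil_i K_1^r(T) = K_1^r(\Btil_i T) = K_1^r(0) = 0 = \std(\Btil_i T)$ under the convention $\std(0)=0$. So assume $\Btil_i T \neq 0$; this also forces $K_1^s(\Btil_i T) = \Btil_i K_1^s(T) \neq 0$ for every $s$, because $K_1$ sends nonzero tableaux to nonzero tableaux (it is defined as a $P$-symbol). Let $r := \min\{ s : K_1^s(T)\in \AI\}$ and $r' := \min\{s : K_1^s(\Btil_i T) \in \AI\}$. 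Since $K_1^r(T)$ is an $\AI$-tableau and $\SST_n^{\AI}(\rho)$ is an $\AI$-subcrystal by Proposition \ref{SSTnAI is AI-crystal}, the element $\Btil_i K_1^r(T) = K_1^r(\Btil_i T)$ is again an $\AI$-tableau; hence $r' \leq r$. For the reverse inequality, apply the same reasoning to $\Btil_i T$ in place of $T$ using the involutivity $\Btil_i^2 = \id$ on nonzero elements from Definition \ref{definition of AI-crystals}\eqref{definition of AI-crystals 1}: $\Btil_i K_1^{r'}(\Btil_i T) = K_1^{r'}(\Btil_i^2 T) = K_1^{r'}(T)$ lies in $\AI$, so $r \leq r'$. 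Thus $r = r'$, and
\[
\std(\Btil_i T) = K_1^{r'}(\Btil_i T) = \Btil_i K_1^r(T) = \Btil_i \std(T),
\]
as required.

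I do not anticipate any real obstacle, since the nontrivial content has already been packaged into Propositions \ref{K1 is an AI-crystal morphism} and \ref{SSTnAI is AI-crystal} and the involutivity axiom. The only subtlety worth stating explicitly is the equality $r=r'$, which is exactly where one uses that $\AI$-stability is preserved by $\Btil_i$ and that the inverse direction works because $\Btil_i$ is its own inverse on nonzero elements.
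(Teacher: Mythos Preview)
Your proof is correct and follows essentially the same approach as the paper's: both reduce to showing $r=r'$ via Proposition~\ref{K1 is an AI-crystal morphism}, Proposition~\ref{SSTnAI is AI-crystal}, and the involutivity of $\Btil_i$. The only cosmetic difference is that you argue symmetrically for $r\le r'$ whereas the paper phrases this direction as a contradiction from $r'<r$; the content is identical.
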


\begin{proof}
Let $\lm \in \Par_{n}$, $T \in \SST_n(\lm)$, $i \in [1,n-1]$. Let $r \geq 0$ be the minimum integer such that $\std(T) = K_1^r(T)$. By Proposition \ref{K1 is an AI-crystal morphism}, we see that
$$
\Btil_i \std(T) = K_1^r(\Btil_i T), \qu \deg_i(\std(T)) = \deg_i(T).
$$
Hence, it suffices to show that
$$
\std(\Btil_i T) = K_1^r(\Btil_i T).
$$
First, suppose that $\Btil_i T = 0$. Since $K_1^r$ is a morphism of $\AI$-crystals, our claim follows immediately.

Next, suppose that $\Btil_i T \neq 0$. Let $r' \geq 0$ be the minimum integer such that
$$
\std(\Btil_i T) = K_1^{r'}(\Btil_i T).
$$
By Proposition \ref{SSTnAI is AI-crystal}, we see that $\Btil_i \std(T)$, which equals $K_1^r(\Btil_i T)$, is an $\AI$-tableau. By the minimality of $r'$, we have $r' \leq r$. Let us show that $r' = r$. If $r' < r$, then
$$
K_1^{r'}(T) = \Btil_i^2 K_1^{r'}(T) = \Btil_i K_1^{r'}(\Btil_i T) = \Btil_i \std(\Btil_i T).
$$
Here, the first equality holds because $\Btil_i T \neq 0$, and hence,
$$
\Btil_i K_1^{r'}(T) = K_1^{r'}(\Btil_i T) \neq 0.
$$
Since $\std(\Btil_i T)$ is an $\AI$-tableau, so is $\Btil_i \std(\Btil_i T)$, which equals $K_1^{r'}(T)$. This contradicts the minimality of $r$. Thus, we obtain $r' = r$, and hence,
$$
\std(\Btil_i T) = K_1^{r}(\Btil_i T),
$$
as desired.
\end{proof}

\subsection{Low rank examples}\label{Subsection, low rank examples}
In this subsection, we investigate the $\AI$-crystal structures of $\SST_n^{\AI}(\rho)$, $\rho \in \Par_m$ in the case when $n = 3,4$.

First, assume that $n = 3$ and consider $\SST_3^{\AI}(l)$, $l \geq 0$. For each $a \in \{ 1,2 \}$ and $b \in [0,l-1]$, set
$$
T_{a,b} := \ytableausetup{centertableaux}
\begin{ytableau}
a & 2 & 2 & \cdots & 2 & 3 & 3 & \cdots & 3
\end{ytableau} \in \SST_3^{\AI}(l),
$$
where $b$ is the number of $3$'s. Also, set
$$
T_l := \ytableausetup{centertableaux}
\begin{ytableau}
3 & 3 & \cdots & 3
\end{ytableau} \in \SST_3^{\AI}(l).
$$
Then, we have
$$
\SST_3^{\AI}(l) = \{ T_{a,b} \mid a \in \{1,2\},\ b \in [0,l-1] \} \sqcup \{ T_l \},
$$
and hence,
\begin{align}\label{size of SST3AI(l)}
|\SST_3^{\AI}(l)| = 2l+1.
\end{align}

\begin{lem}\label{Connectedness of SST3AI(l)}
Let $n = 3$ and $l \geq 0$. Then, the $\AI_2$-crystal $\SST_3^{\AI}(l)$ is connected.
\end{lem}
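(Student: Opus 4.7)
The plan is to compute the $\AI_2$-crystal structure on $\SST_3^{\AI}(l)$ explicitly and show that the resulting graph is a single path on $2l+1$ vertices. For each single-row tableau $T = 1^p 2^q 3^r$ with $p+q+r = l$, the underlying $\frgl_3$-crystal satisfies $\vphi_1(T) = p$, $\vep_1(T) = q$, $\vphi_2(T) = q$, $\vep_2(T) = r$, so Corollary \ref{AI-crystal structure on gl-crystal} gives closed formulas for $\Btil_i$ in terms of the parities of $p$ and $q$.

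Applying these formulas to $T_{1,b}$ (which has $p = 1$), $T_{2,b}$ (which has $p = 0$), and $T_l$ yields $\Btil_1 T_{1,b} = T_{2,b}$, $\Btil_1 T_{2,b} = T_{1,b}$ for $b \in [0, l-1]$, and $\Btil_1 T_l = 0$. Thus $\Btil_1$ perfectly matches the two families $\{T_{1,b}\}$ and $\{T_{2,b}\}$, and isolates $T_l$. A parallel case analysis for $\Btil_2$, based on the parity of $q = l-1-b$ (resp.\ $q = l-b$) for $T_{1,b}$ (resp.\ $T_{2,b}$), shows that within each family $\Btil_2$ zigzags along the $b$-index; moreover $\Btil_2 T_l = T_{2, l-1}$, and exactly one of $T_{1,0}$, $T_{2,0}$, namely $T_{1,0}$ if $l$ is odd and $T_{2,0}$ if $l$ is even, is the unique element of $\SST_3^{\AI}(l) \setminus \{T_l\}$ killed by $\Btil_2$.

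Denoting that distinguished element by $T^\star$, I would verify that the sequence obtained by alternately applying $\Btil_1$ and $\Btil_2$ to $T^\star$ (beginning with $\Btil_1$) produces $2l+1$ distinct elements of $\SST_3^{\AI}(l)$ ending at $T_l$. Concretely, each $\Btil_1$-step toggles the family index in $\{1, 2\}$ without changing $b$, and the ensuing $\Btil_2$-step increments $b$ by one without changing the family; the parity conditions governing $\Btil_2$ are exactly right to ensure that both operations are defined at every intermediate step. Since the sequence visits $2l+1 = |\SST_3^{\AI}(l)|$ elements, it exhausts the crystal, which is therefore connected.

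The main obstacle is the parity bookkeeping in the $\Btil_2$ analysis, especially at the boundaries $b = 0$ and $b = l-1$. Splitting into the two cases $l$ even and $l$ odd (so that $T^\star = T_{2,0}$ or $T^\star = T_{1,0}$ respectively) and writing out the alternating path explicitly should make the argument fully transparent.
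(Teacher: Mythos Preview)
Your proposal is correct and follows essentially the same approach as the paper: both arguments compute the actions of $\Btil_1$ and $\Btil_2$ explicitly on the elements $T_{a,b}$ and $T_l$, then exhibit a spanning path by applying $\Btil_1$ and $\Btil_2$ in alternation. The only cosmetic difference is orientation: the paper shows that iterating $\Btil_2\Btil_1$ walks from any $T_{a',b}$ down to $T_l$, whereas you start from the $\Btil_2$-killed element $T^\star \in \{T_{1,0},T_{2,0}\}$ and walk up to $T_l$; these are the same path traversed in opposite directions.
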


\begin{proof}
From definitions, we obtain
\begin{align}
\begin{split}
&\Btil_1 T_{a,b} = T_{a',b}, \\
&\Btil_2 T_{a,b} = \begin{cases}
0 \qu & \IF l-\delta_{a,1}-b \in \Z_{\ev} \AND b = 0, \\
T_{a,b-1} \qu & \IF l-\delta_{a,1}-b \in \Z_{\ev} \AND b > 0, \\
T_{a,b+1} \qu & \IF l-\delta_{a,1}-b \in \Z_{\odd} \AND b < l-1, \\
T_l \qu & \IF l-\delta_{a,1}-b \in \Z_{\odd} \AND  b = l-1,
\end{cases} \\
&\Btil_1 T_l = 0, \\
&\Btil_2 T_l = T_{2,l-1},
\end{split} \nonumber
\end{align}
where $a' \in \{ 1,2 \} \setminus \{a\}$. Then, we see that
$$
\Btil_2 \Btil_1 T_{1,l-1} = \Btil_2 T_{2,l-1} = T_l,
$$
and
$$
\Btil_2 \Btil_1 T_{a',b} = \Btil_2 T_{a,b} = T_{a,b+1}
$$
for all $b \in [0,l-2]$, where $a = 1$ if $l-b \in \Z_{\ev}$ and $a = 2$ otherwise, and $a' \in \{ 1,2 \} \setminus \{a\}$. These show that each $T \in \SST_3^{\AI}(\rho)$ is connected to $T_l$. Therefore, $\SST_3^{\AI}(\rho)$ is connected.
\end{proof}

Next, assume that $n = 4$ and consider $\SST_4^{\AI}(l)$, $l \geq 0$. For each $a \in \{ 1,2 \}$, $c \in [0,l-1]$, and $b \in [0,l-c-1]$, set
$$
T_{a,b,c} := \ytableausetup{centertableaux}
\begin{ytableau}
a & 2 & 2 & \cdots & 2 & 3 & 3 & \cdots & 3 & 4 & 4 & \cdots & 4
\end{ytableau} \in \SST_4^{\AI}(l),
$$
where $b$ and $c$ are the numbers of $3$'s and $4$'s, respectively. Also, for each $c \in [0,l]$, set
$$
T_c := \ytableausetup{centertableaux}
\begin{ytableau}
3 & 3 & \cdots & 3 & 4 & 4 & \cdots & 4
\end{ytableau} \in \SST_4^{\AI}(l),
$$
where $c$ is the number of $4$'s. Then, we have
$$
\SST_4^{\AI}(l) = \{ T_{a,b,c} \mid a \in \{1,2\},\ c \in [0,l-1],\ b \in [0,l-c-1] \} \sqcup \{ T_c \mid c \in [0,l] \},
$$
and hence,
\begin{align}\label{size of SST4AI(l)}
|\SST_4^{\AI}(l)| = \sum_{c=0}^{l-1} 2(l-c) + (l+1) = (l+1)^2.
\end{align}

\begin{lem}\label{Connectedness of SST4AI(l)}
Let $n = 4$ and $l \geq 0$. Then, the $\AI_3$-crystal $\SST_4^{\AI}(l)$ is connected.
\end{lem}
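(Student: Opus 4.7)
The plan is to parallel the proof of Lemma \ref{Connectedness of SST3AI(l)}: explicitly compute the $\AI_3$-crystal operators $\Btil_1, \Btil_2, \Btil_3$ on every element of $\SST_4^{\AI}(l)$, and then exhibit for each element a chain of $\Btil_i$-arrows linking it to the fixed element $T_l$.

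First, since every element of $\SST_4^{\AI}(l)$ is a single-row tableau, its column reading coincides with the row read left to right, which is weakly increasing. In particular, no cancellation occurs in the signature computation for the $\frgl$-crystal operators, so $\vphi_i(T)$ equals the number of $i$'s in $T$ and $\vep_i(T)$ equals the number of $(i+1)$'s. Corollary \ref{AI-crystal structure on gl-crystal} then determines $\Btil_i$ as $\Etil_i$ or $\Ftil_i$ depending on the parity of $\vphi_i(T)$. In either case the result is obtained by modifying a single letter of the row, still producing a weakly increasing sequence, hence an element of $\SST_4^{\AI}(l)$ by Proposition \ref{SSTnAI is AI-crystal}.

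Next, a routine (but tedious) case analysis on the tableaux $T_{a,b,c}$ and $T_c$ yields: $\Btil_1$ exchanges $T_{1,b,c}$ and $T_{2,b,c}$ (swapping the first entry between $1$ and $2$) and annihilates every $T_c$; $\Btil_3$ flips the numbers of $3$'s and $4$'s, in one direction or the other according to the parity of $b$ (for $T_{a,b,c}$) or of $l-c$ (for $T_c$); and $\Btil_2$ moves $T_{2,b,c}$ within the $a=2$ row of fixed $c$, with a single crossover at $b = l-c-1$ where the unique $2$ is converted to a $3$, giving $\Btil_2 T_{2, l-c-1, c} = T_c$ (and conversely $\Btil_2 T_c = T_{2, l-c-1, c}$).

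With these formulas in hand, connectivity is proved in two stages. For each fixed $c \in [0, l-1]$, alternating $\Btil_1$ (which swaps $a$ and thereby toggles the parity of the number of $2$'s) and $\Btil_2$ sweeps out all tableaux $T_{a, b, c}$ with $a \in \{1,2\}$, $b \in [0, l-c-1]$ and eventually lands on $T_{2, l-c-1, c}$; one further application of $\Btil_2$ then reaches $T_c$. The operator $\Btil_3$ next joins the different $c$'s: starting from $T_c$ it shifts to $T_{c \pm 1}$, and iteration delivers $T_l$. The main obstacle is the parity bookkeeping in the first stage: one must verify that the interleaved $\Btil_1$/$\Btil_2$ moves sweep the entire $b$-range for fixed $c$ and, in particular, always reach the boundary value $b = l-c-1$ where the transition to $T_c$ occurs; once this is checked, the rest is immediate from the explicit formulas.
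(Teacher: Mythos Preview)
Your overall strategy matches the paper's: compute the operators explicitly, connect everything within a fixed $c$-slice using $\Btil_1,\Btil_2$ (this is precisely Lemma~\ref{Connectedness of SST3AI(l)}, which the paper simply cites), and then bridge the slices using $\Btil_3$. The first stage is fine.

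The second stage, however, does not work as written. You claim that ``starting from $T_c$ [the operator $\Btil_3$] shifts to $T_{c\pm 1}$, and iteration delivers $T_l$.'' But $\Btil_3$ is an involution on its support (axiom~\eqref{definition of AI-crystals 1} of Definition~\ref{definition of AI-crystals}), so iterating it from $T_c$ merely oscillates between $T_c$ and a single neighbour $T_{c'}$, with $c'=c+1$ when $l-c$ is odd and $c'=c-1$ when $l-c$ is even. You never traverse more than two adjacent values of $c$. Concretely, for $l=2$ one has $\Btil_3 T_0 = 0$ (since $l-0=2$ is even and $c=0$), so $T_0$ cannot reach $T_2=T_l$ by $\Btil_3$ alone; and $\{T_1,T_2\}$ form a $\Btil_3$-orbit disjoint from $\{T_0\}$.

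The repair is exactly what the paper does: when $\Btil_3 T_c$ moves in the wrong direction (i.e.\ when $l-c$ is odd), first apply $\Btil_2$ to pass to $T_{2,l-c-1,c}$, and then apply $\Btil_3$ there, where $b=l-c-1$ is even and hence $\Btil_3$ decreases $c$. In other words, you must interleave the two stages rather than run them sequentially; the paper packages this as an induction on $c$ toward $T_{1,0,0}$.
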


\begin{proof}
From definitions, we obtain
\begin{align}
\begin{split}
&\Btil_1 T_{a,b,c} = T_{a',b,c}, \\
&\Btil_2 T_{a,b,c} = \begin{cases}
0 \qu & \IF l-\delta_{a,1}-b-c \in \Z_{\ev} \AND b = 0, \\
T_{a,b-1,c} \qu & \IF l-\delta_{a,1}-b-c \in \Z_{\ev} \AND b > 0, \\
T_{a,b+1,c} \qu & \IF l-\delta_{a,1}-b-c \in \Z_{\odd} \AND b < l-c-1, \\
T_c \qu & \IF l-\delta_{a,1}-b-c \in \Z_{\odd} \AND b = l-c-1,
\end{cases} \\
&\Btil_3 T_{a,b,c} = \begin{cases}
0 \qu & \IF b \in \Z_{\ev} \AND c = 0, \\
T_{a,b+1,c-1} \qu & \IF b \in \Z_{\ev} \AND c > 0, \\
T_{a,b-1,c+1} \qu & \IF b \in \Z_{\odd}, 
\end{cases} \\
&\Btil_1 T_c = 0, \\
&\Btil_2 T_c = \begin{cases}
0 \qu & \IF l-c = 0, \\
T_{2,l-c-1,c} \qu & \IF l-c > 0,
\end{cases} \\
&\Btil_3 T_c = \begin{cases}
0 \qu & \IF l-c \in \Z_{\ev} \AND c = 0, \\
T_{c-1} \qu & \IF l-c \in \Z_{\ev} \AND c > 0, \\
T_{c+1} \qu & \IF l-c \in \Z_{\odd}.
\end{cases}
\end{split} \nonumber
\end{align}
where $a' \in \{ 1,2 \} \setminus \{a\}$. We show that each $T_{a,b,c}$ and $T_c$ are connected to $T_{1,0,0}$ by induction on $c$. When $c = 0$, our claim follows from Lemma \ref{Connectedness of SST3AI(l)}. Assume that $c > 0$. Then, by Lemma \ref{Connectedness of SST3AI(l)}, we see that $T_{a,b,c}$ is connected to $T_c$. Then, we have $\Btil_3(T_c) = T_{c-1}$ (resp., $\Btil_3 \Btil_2 T_c = T_{2,l-c,c-1}$) if $l-c$ is even (resp., $l-c$ is odd). Hence, our induction hypothesis implies that $T_c$ is connected to $T_{1,0,0}$. Thus, the proof completes.
\end{proof}

Finally, assume that $n = 4$ and consider $\SST_4^{\AI}(l_1,l_2)$, $l_1 \geq l_2 > 0$. For each $a \in \{ 1,2 \}$, $c \in [0,l_1-l_2]$, and $b \in [0,l_1-c-1]$, set
$$
T_{a,b,c} := \ytableausetup{centertableaux}
\begin{ytableau}
a & 2 & 2 & \cdots & 2 & 3 & 3 & \cdots & 3 & 4 & 4 & \cdots & 4 \\
4 & 4 & \cdots & 4
\end{ytableau} \in \SST_4(l_1,l_2),
$$
where $b$ and $c$ are the numbers of $3$'s and $4$'s in the first row, respectively. Also, for each $c \in [0,l_1-l_2]$, set
$$
T_c := \ytableausetup{centertableaux}
\begin{ytableau}
3 & 3 & \cdots & \cdots & 3 & 4 & 4 & \cdots & 4 \\
4 & 4 & \cdots & 4
\end{ytableau} \in \SST_4(l),
$$
where $c$ is the number of $4$'s in the first row. Then, we have
\begin{align}
\begin{split}
\SST_4^{\AI}(l_1,l_2) = &\{ T_{a,b,c}, K_1(T_{a,b,c}) \mid a \in \{1,2\},\ c \in [0,l_1-l_2],\ b \in [0,l_1-c-1] \} \\
&\sqcup \{ T_c, K_1(T_c) \mid c \in [0,l_1-l_2] \},
\end{split} \nonumber
\end{align}
and hence,
\begin{align}\label{size of SST4AI(l1,l2)}
|\SST_4^{\AI}(l_1,l_2)| = 2(\sum_{c=0}^{l_1-l_2} 2(l_1-c) + (l_1-l_2+1)) = 2(l_1-l_2+1)(l_1+l_2+1).
\end{align}
Note that we have
\begin{align}
\begin{split}
K_1(T_{a,b,c}), K_1(T_c) \notin &\{ T_{a',b',c'} \mid a' \in \{1,2\},\ c' \in [0,l_1-l_2],\ b' \in [0,l_1-c'-1] \} \\
&\sqcup \{ T_{c'} \mid c' \in [0,l_1-l_2] \}
\end{split} \nonumber
\end{align}
because the $(2,1)$-th entries of $K_1(T_{a,b,c})$ and $K_1(T_c)$ are not $4$, while those of $T_{a',b',c'}$ and $T_{c'}$ are $4$.

\begin{lem}\label{Connectedness of SST4AI(l1,l2)}
Let $n = 4$ and $l_1 \geq l_2 > 0$. Then, the $\AI_3$-crystal $\SST_4^{\AI}(l_1,l_2)$ is connected.
\end{lem}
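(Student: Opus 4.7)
The plan is to adapt the inductive strategy of Lemma \ref{Connectedness of SST4AI(l)} while exploiting that, for shape $(l_1, l_2)$ with $\ell(l_1, l_2) = 2 = n/2$, the operator $K_1$ acts as an involution on $\SST_4^{\AI}(l_1, l_2)$. Indeed, Lemma \ref{K1s on AI-semistandard tableau} guarantees that $K_1$ sends $\SST_4^{\AI}(l_1, l_2)$ into itself with $K_1^2 = \id$, while Proposition \ref{K1 is an AI-crystal morphism} makes $K_1$ an isomorphism of $\AI$-crystals. Accordingly, split $\SST_4^{\AI}(l_1, l_2) = A \sqcup A'$, where $A$ consists of the tableaux $T_{a,b,c}$ and $T_c$ (distinguished by their $(2,1)$-entry being $4$) and $A' = K_1(A)$. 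Any $\AI$-crystal path inside $A$ pushes forward under $K_1$ to a path inside $A'$, so it will suffice to (i) connect every element of $A$ to a common base point, and (ii) exhibit a single $\Btil_i$-step bridging $A$ and $A'$.

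Next, I would compute $\Btil_i T_{a,b,c}$ and $\Btil_i T_c$ for $i \in [1,3]$ by combining the tensor formula of Proposition \ref{AI-crystal tensor gl-crystal} with the column-level description of Lemma \ref{lemma for K-matrix}, applied to the natural embedding $\SST_4(l_1, l_2) \hookrightarrow \SST_4(1^2)^{\otimes l_2} \otimes \SST_4(1)^{\otimes (l_1-l_2)}$. The key structural observation is that every element of $A$ has second row $(4, \ldots, 4)$, so the first $l_2$ column-factors all have the form $u_{?, 4}$; Lemma \ref{lemma for K-matrix} shows $\deg_1$ and $\deg_2$ of $u_{?,4}$ do not see the $4$, so the $\Btil_1$- and $\Btil_2$-actions on $T_{a,b,c}$ and $T_c$ reduce to the shape-$(l_1)$ formulas already computed in Lemma \ref{Connectedness of SST4AI(l)}. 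Only $\Btil_3$ requires genuine new work: it either swaps $3 \leftrightarrow 4$ within the first row (staying inside $A$) or demotes the $(2,1)$-entry from $4$ to $3$ (exiting into $A'$).

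With these formulas in place, the induction on $c$ from the proof of Lemma \ref{Connectedness of SST4AI(l)} transfers essentially verbatim, showing that every $T_{a,b,c}$ and $T_c$ is connected inside $\SST_4^{\AI}(l_1, l_2)$ to the base element $T_{1, l_1 - 1, 0}$; the path is allowed to dip into $A'$ along the way, which is harmless since we are working in the full $\AI$-crystal. Applying the isomorphism $K_1$ gives the analogous statement for $A'$, connecting each of its elements to $K_1(T_{1, l_1 - 1, 0})$. To finish, a single bridge suffices: direct computation via Lemma \ref{lemma for K-matrix} on the leftmost length-$2$ column shows that $\Btil_3$ applied to a carefully chosen element of $A$ (for instance $T_0$, whose first column is $u_{3,4}$, or $T_{1, l_1-1, 0}$) changes the first column from $u_{?, 4}$ to $u_{?, 3}$ and hence lands in $A'$.

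The main obstacle is the bookkeeping for $\Btil_3$, the one operator that couples the two rows (both containing copies of $4$). Its placement among the tensor factors is governed by the comparison of $\deg_3$ and $\vphi_3$ values accumulated across all $l_1$ columns, and a careful case analysis on the parities of $l_1 - b - c$ and $c$, parallel to the one appearing in Lemma \ref{Connectedness of SST4AI(l)}, is needed to track when $\Btil_3$ stays inside $A$ versus crosses into $A'$. Once this case analysis is carried out on the bridge element, the remaining steps are routine extensions of the one-row case.
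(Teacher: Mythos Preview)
Your strategy coincides with the paper's: split $\SST_4^{\AI}(l_1,l_2)$ into the set $A = \{T_{a,b,c}, T_c\}$ and its $K_1$-image, connect everything in $A$ to a single base point by induction on $c$ (reducing to the one-row Lemma~\ref{Connectedness of SST3AI(l)} for $\Btil_1,\Btil_2$), transport to $A'$ via the $\AI$-crystal automorphism $K_1$, and finish with one $\Btil_3$-bridge between the two halves. The paper carries this out with base point $T_{1,0,0}$ and bridge identity $K_1(T_{1,0,0}) = \Btil_1\Btil_3 T_{1,0,0}$.

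The only slip is in your candidate bridge elements. From the explicit $\Btil_3$-formulas (which the paper records), $\Btil_3 T_{a,b,c}$ lands in $A'$ precisely when $b < l_2$; otherwise it either vanishes or stays in $A$ by trading a $3$ for a $4$ in the first row. For $T_0$ one has $\deg_3(u_{3,4}) = 0$ on every length-$2$ column, so $\Btil_3$ acts only on the tail of $l_1 - l_2$ boxes and never touches the $(2,1)$-entry; indeed $\Btil_3 T_0 \in \{0, T_1\}$. Likewise $T_{1,l_1-1,0}$ has $b = l_1 - 1 \geq l_2$ whenever $l_1 > l_2$, so it also fails to bridge in general. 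The element $T_{1,0,0}$ (or any $T_{a,b,c}$ with $b < l_2$) is what you want: there $\Btil_3$ genuinely acts on the leftmost column $u_{a,4}$ and sends it to $u_{a,3}$, yielding $K_1(T_{a',b,c}) \in A'$. With this correction your outline is complete and matches the paper's argument.
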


\begin{proof}
By definitions, we obtain
\begin{align}
\begin{split}
&\Btil_1 T_{a,b,c} = T_{a',b,c}, \\
&\Btil_2 T_{a,b,c} = \begin{cases}
0 \qu & \IF l_1-\delta_{a,1}-b-c \in \Z_{\ev} \AND b = 0, \\
T_{a,b-1,c} \qu & \IF l_1-\delta_{a,1}-b-c \in \Z_{\ev} \AND b > 0, \\
T_{a,b+1,c} \qu & \IF l_1-\delta_{a,1}-b-c \in \Z_{\odd} \AND b < l_1-c-1, \\
T_c \qu & \IF l_1-\delta_{a,1}-b-c \in \Z_{\odd} \AND b = l_1-c-1,
\end{cases} \\
&\Btil_3 T_{a,b,c} = \begin{cases}
K_1(T_{a',b,c}) \qu & \IF b < l_2, \\
0 \qu & \IF b - l_2 \in \Z_{\geq 0, \ev} \AND c = 0, \\
T_{a,b+1,c-1} \qu & \IF b-l_2 \in \Z_{\geq 0, \ev} \AND c > 0, \\
T_{a,b-1,c+1} \qu & \IF b-l_2 \in \Z_{\geq 0, \odd}, 
\end{cases} \\
&\Btil_1 T_c = 0, \\
&\Btil_2 T_c = T_{2,l_1-c-1,c}, \\
&\Btil_3 T_c = \begin{cases}
0 \qu & \IF l_1-l_2-c \in \Z_{\ev} \AND c = 0, \\
T_{c-1} \qu & \IF l_1-l_2-c \in \Z_{\ev} \AND c > 0, \\
T_{c+1} \qu & \IF l_1-l_2-c \in \Z_{\odd}.
\end{cases}
\end{split} \nonumber
\end{align}
where $a' \in \{ 1,2 \} \setminus \{a\}$. We show that each $T_{a,b,c}$ and $T_c$ are connected to $T_{1,0,0}$ by induction on $c$. If this is the case, then the assertion follows from the fact that $K_1$ commutes with $\Btil_i$'s and that $K_1(T_{1,0,0}) = \Btil_1 \Btil_3 T_{1,0,0}$ as verified from computation above. When $c = 0$, our claim follows from Lemma \ref{Connectedness of SST3AI(l)}. Assume that $c > 0$. By Lemma \ref{Connectedness of SST3AI(l)}, we see that $T_{a,b,c}$ is connected to $T_c$. Then, we have $\Btil_3 T_c = T_{c-1}$ (resp., $\Btil_3 \Btil_2 T_c = T_{2,l_1-c,c-1}$) if $l_1-l_2-c$ is even (resp., if $l_1-l_2-c$ is odd). Hence, our induction hypothesis implies that $T_c$ is connected to $T_{1,0,0}$. Thus, the proof completes.
\end{proof}

\section{Representation theoretic interpretation}\label{Section Representation theoretic interpretation}
In this section, we show that the characters of $\SST_n^{\AI}(\rho)$, $\rho \in \Par_m$ coincide with the characters of certain $\mathfrak{so}_n$-modules. To do so, we need results from the representation theory of the $\imath$quantum group of type $\AI$ obtained in \cite{W21b}.

\subsection{Representation theoretic interpretation of $\frgl$-crystals}
Let us briefly review the finite-dimensional representation theory of the general linear algebra $\frgl_n = \frgl_n(\C)$. The $\mathfrak{gl}_n$ is realized as the Lie algebra of $n \times n$ matrices. For each $1 \leq i,j \leq n$, let $E_{i,j}$ denote the matrix unit with entry $1$ at $(i,j)$-position. Let $e_i := E_{i,i+1}$, $f_i := E_{i+1, i}$, $i \in \{ 1,\dots,n-1 \}$ denote the Chevalley generators.

Let $M$ be a finite-dimensional $\frgl_n$-module. Then, it admits a weight space decomposition
$$
M = \bigoplus_{\lm = (\lambda_1,\dots,\lambda_n) \in X_{\frgl_n}} M_\lm, \quad M_\lambda := \{ m \in M \mid E_{i,i} m = \lambda_i m \text{ for all } i \in \{ 1,\dots,n \} \},
$$
where $X_{\frgl_n} := \Z^n$ denotes the weight lattice for $\frgl_n$. The character $\ch_{\frgl_n} M$ of $M$ is a Laurent polynomial defined by
$$
\ch_{\frgl_n} M := \sum_{\lm \in X_{\frgl_n}} (\dim M_\lm) \bfx^\lm \in \Z[x_1^{\pm 1},\ldots,x_n^{\pm 1}],
$$
where
$$
\bfx^{(\lm_1,\ldots,\lm_n)} := x_1^{\lm_1} \cdots x_n^{\lm_n}.
$$

The finite-dimensional $\frgl_n$-modules are completely reducible, and the isomorphism classes of finite-dimensional irreducible $\frgl_n$-modules are parametrized by the set
$$
X_{\frgl_n}^+ := \{ \lm = (\lm_1,\ldots,\lm_n) \in X_{\frgl_n} \mid \lm_1 \geq \cdots \geq \lm_n \}
$$
of dominant integral weights. For $\lm \in X_{\frgl_n}^+$, let $V^{\frgl_n}(\lm)$ denote the corresponding $\mathfrak{gl}_n$-module, i.e., the finite-dimensional irreducible $\mathfrak{gl}_n$-module of highest weight $\lm$.

The assignment
$$
\Par_n \rightarrow X_{\frgl_n, \geq 0}^+ := \{ \lm = (\lm_1,\ldots,\lm_n) \in X_{\frgl_n}^+ \mid \lm_n \geq 0 \};\ \lm \mapsto (\lm_1,\ldots,\lm_{\ell(\lm)},0,\ldots,0)
$$
is bijective. In this way, we often identify these two sets. In particular, for a partition $\lm \in \Par_n$, we understand $\lm_i = 0$ for $i > \ell(\lm)$. As is well-known, the $\frgl$-crystal $\SST_n(\lm)$ models $V^{\mathfrak{gl}_n}(\lm)$ in the following sense:
$$
\ch_{\frgl_n} V^{\mathfrak{gl}_n}(\lm) = \ch_{\frgl_n} \SST_n(\lm).
$$

\subsection{Representation theory of the $\imath$quantum group of type A\!I}\label{subsection results for iquantum group}
Let $\frso_n = \frso_n(\C)$ denote the special orthogonal algebra. It is realized as the Lie subalgebra of $\frgl_n$ generated by $b_i := f_i + e_i$, $i \in \{ 1,\dots,n-1 \}$ (cf. \cite[Section 4.1]{W21b}).
Recall the integer $m$ from equation \eqref{eq: def of m}.
Note that it is the rank of $\mathfrak{so}_n$.

Let $M$ be a finite-dimensional $\frso_n$-module. Then, it admits a weight space decomposition
$$
M = \bigoplus_{\nu = (\nu_1,\nu_3,\dots,\nu_{2m-1}) \in X_{\frso_n}} M_\nu, \quad M_\nu := \{ m \in M \mid b_{2i-1} m = \nu_{2i-1} m \text{ for all } i \in \{ 1,\dots,m \} \},
$$
where $X_{\frso_n} := \Z^m \sqcup (\hf + \Z)^m$ denotes the weight lattice for $\frso_n$. Note that $b_1,b_3,\dots,b_{2m-1}$ generate a Cartan subalgebra of $\mathfrak{so}_n$, and the sets $\{ b_1-b_3, b_3-b_5, \dots, b_{2m-3}-b_{2m-1}, 2b_{2m-1} \}$ and $\{ b_1-b_3,b_3-b_5,\dots,b_{2m-3}-b_{2m-1}, b_{2m-3}+b_{2m-1} \}$ form the simple coroots when $n$ is odd and when $n$ is even, respectively (cf. \cite[Section 4.1]{W21b}). A weight $\nu \in X_{\frso_n}$ is said to be an integer weight if $\nu \in \Z^m$. Let $X_{\frso_n,\Int}$ denote the set of integer weights. The character $\ch_{\frso_n} M$ of $M$ is a Laurent polynomial defined by
$$
\ch_{\frso_n} M := \sum_{\nu \in X_{\frso_n}} (\dim M_\lm) \bfy^\nu \in \Z[y_1^{\pm \hf},y_3^{\pm \hf}, \ldots,y_{2m-1}^{\pm \hf}],
$$
where
$$
\bfy^{(\nu_1,\nu_3,\ldots,\nu_{2m-1})} := y_1^{\nu_1} y_3^{\nu_3} \cdots y_{2m-1}^{\nu_{2m-1}}.
$$

The finite-dimensional $\frso_n$-modules are completely reducible, and the isomorphism classes of irreducible finite-dimensional $\frso_n$-modules are parametrized by the set
$$
X_{\frso_n}^+ := \begin{cases}
\{ (\nu_1,\nu_3,\ldots,\nu_{2m-1}) \in X_{\frso_n} \mid \nu_1 \geq \cdots \geq \nu_{2m-3} \geq |\nu_{2m-1}| \} \qu & \IF n \in \Z_{\ev}, \\
\{ (\nu_1,\nu_3,\ldots,\nu_{2m-1}) \in X_{\frso_n} \mid \nu_1 \geq \cdots \geq \nu_{2m-1} \geq 0 \} \qu & \IF n \in \Z_{\odd}
\end{cases}
$$
of dominant integral weights. For $\nu \in X_{\frso_n}^+$, let $V^{\frso_n}(\nu)$ denote the corresponding $\frso_n$-module, that is, the irreducible module of highest weight $\nu$.
Set $X_{\mathfrak{so}_n, \text{int}}^+ := X_{\mathfrak{so}_n}^+ \cap X_{\mathfrak{so}_n, \text{int}}$.

For each $\rho \in \Par_m$, set
$$
V^{\mathfrak{so}_n}(\rho) := \begin{cases}
  V^{\mathfrak{so}_n}(\nu_\rho) & \text{ if } \ell(\rho) < \frac{n}{2}, \\
  V^{\mathfrak{so}_n}(\nu_\rho^+) \oplus V^{\mathfrak{so}_n}(\nu_\rho^-) & \text{ if } \ell(\rho) = \frac{n}{2},
\end{cases}
$$
where
$$
\nu_\rho := (\rho_1,\rho_2,\dots,\rho_{\ell(\rho)},0,\dots,0), \ \nu_\rho^\pm := (\rho_1,\rho_2,\dots,\rho_{m-1}, \pm \rho_{m}) \in X_{\mathfrak{so}_n, \text{int}}^+.
$$
The rest of this section is devoted to showing that $\SST_n^{\AI}(\rho)$ models $V^{\mathfrak{so}_n}(\rho)$.

Let $\lm \in \Par_{n}$ and consider $\SST_n(\lm)$. Set
$$
\ol{\clL}(\lm) := \C \SST_n(\lm),
$$
and extend the operators $\Etil_i,\Ftil_i,\Btil_i$ on $\SST_n(\lm)$ to linear operators on $\ol{\clL}(\lm)$. As we have seen in the end of Subsection \ref{Subsection AI-crystal}, the space $\ol{\clL}(\lm)$ admits an $\frso_n$-weight space decomposition
$$
\ol{\clL}(\lm) = \bigoplus_{\nu \in X_{\frso_n,\Int}} \ol{\clL}(\lm)_\nu.
$$
This decomposition corresponds to the weight space decomposition of $V^{\mathfrak{gl}_n}(\lm)$ regarded as an $\frso_n$-module: For each $\nu \in X_{\frso_n,\Int}$, it holds that
$$
\dim V^{\mathfrak{gl}_n}(\lm)_\nu = \dim \ol{\clL}(\lm)_\nu.
$$
Hence, we have
$$
\ch_{\frso_n} V^{\mathfrak{gl}_n}(\lm) = \ch_{\AI} \SST_n(\lm).
$$
From this, we can say that the $\AI$-crystal $\SST_n(\lm)$ models the $\frso_n$-module $V^{\mathfrak{gl}_n}(\lm)$.

In \cite{W21b}, certain linear operators on $\ol{\clL}(\lm)$ are defined via the representation theory of the $\imath$quantum group of type AI. They are denoted by $\Xtil_j,\Ytil_j$ with $j \in \Itil$ (in \cite{W21b}, $\Itil$ is denoted by $\Itil_{\frk}$), where
$$
\Itil := \{ (i,+),\ (i,-) \mid i \in [1,n-2]_{\ev} \} \sqcup [1,n-1]_{\ev}.
$$
These operators are defined locally in the following sense: $\Xtil_2,\Ytil_2$ (resp., $\Xtil_{2,\pm},\Ytil_{2,\pm}$) are defined in terms of $\Etil_i,\Ftil_i$, $i \in \{1,2\}$ (resp., $i \in \{1,2,3\}$). And $\Xtil_i,\Ytil_i$, $i \in [1,n-1]_{\ev}$ (resp., $\Xtil_{i,\pm},\Ytil_{i,\pm}$, $i \in [1,n-2]_{\ev}$) are defined in the same way as $\Xtil_2,\Ytil_2$ (resp., $\Xtil_{2,\pm},\Ytil_{2,\pm}$) with the role of $\{1,2\}$ (resp., $\{1,2,3\}$)  replaced by $\{i-1,i\}$ (resp., $\{i-1,i,i+1\}$).

A nonzero weight vector $v \in \ol{\clL}(\lm)_\nu$ is said to be a highest weight vector of weight $\nu \in X_{\frso_n,\Int}^+$ if $\Xtil_j v = 0$ for all $j \in \Itil$. For a highest weight vector $v$ of weight $\nu$, set
$$
\Ytil v := \C \{ \Ytil_{j_1} \cdots \Ytil_{j_r} v \mid r \geq 0,\ j_1,\ldots,j_r \in \Itil \}.
$$
This space corresponds to an irreducible $\frso_n$-submodule of $V^{\mathfrak{gl}_n}(\lm)$ isomorphic to $V^{\frso_n}(\nu)$. Similarly, for a vector $u \in \ol{\clL}(\lm)$, set
$$
\Btil u := \C \{ \Btil_{i_1} \cdots \Btil_{i_r} u \mid r \geq 0,\ i_1,\ldots,i_r \in [1,n-1] \}.
$$
By definitions of $\Ytil_j$'s and $\Btil_i$'s, for each highest weight vector $v$, we see that
$$
\Btil v \subset \Ytil v.
$$
Also, $\Ytil v$ admits an $\frso_n$-weight space decomposition
$$
\Ytil v = \bigoplus_{\xi \in X_{\frso_n,\Int}} (\Ytil v \cap \ol{\clL}(\lm)_\xi),
$$
and it holds that
$$
\dim (\Ytil v \cap \ol{\clL}(\lm)_\xi) =  \dim V^{\frso_n}(\nu)_\xi.
$$

For each $\nu \in X_{\frso_n,\Int}^+$, choose a basis $\{ v^\nu_1,\ldots,v^\nu_{m_{\nu}} \}$ of the subspace $H(\nu) \subset \ol{\clL}(\lm)$ consisting of highest weight vectors of weight $\nu$. Then, we have
\begin{align}
\ol{\clL}(\lm) = \bigoplus_{\nu \in X_{\frso_n,\Int}^+} \bigoplus_{k=1}^{m_\nu} \Ytil v^\nu_k. \nonumber
\end{align}
By observation above, this corresponds to an irreducible decomposition of $V^{\mathfrak{gl}_n}(\lm)$ as an $\frso_n$-module.

A basis of the subspace $H(\nu)$ can be found as follows (see \cite{W21b} for detail).

\begin{defi}\label{definition of singular element}\normalfont
Let $\clB$ be an $\AI$-crystal. An element $b \in \clB$ is said to be a singular element of degree $\rho \in \Par_m$ if it satisfies the following:
\begin{enumerate}
\item\label{definition of singular element 1} $\deg_{2i-1}(b) = \rho_i$ for all $i \in [1,m]$.
\item\label{definition of singular element 2} $\deg_{2i}(b) = 0$ for all $i \in [1,m]$ such that $2i < n$.
\item\label{definition of singular element 3} $\deg_{2i+1}((\Btil_{2i-1}\Btil_{2i})^{\rho_{i+1}} b) = 0$ for all $i \in [1,m]$ such that $2i+1 < n$.
\end{enumerate}
Let $\Sing(\clB,\rho)$ denote the set of singular elements of degree $\rho$.
\end{defi}

Let $\rho \in \Par_m$, and set
$$
\Sing(\lm,\rho) := \Sing(\SST_n(\lm),\rho).
$$
For each $S \in \Sing(\lm,\rho)$, set
$$
h(S) := (1+\Btil_1)(1+\Btil_3) \cdots (1+\Btil_{2m-3}) S.
$$
Then, when $\ell(\rho) < \frac{n}{2}$ (resp., $\ell(\rho) = \frac{n}{2}$), the vector $h(S)$ (resp., $h(S)_\pm := (1 \pm \Btil_{2m-1}) h(S)$) is a highest weight vector of weight
$$
\nu := (\rho_1,\rho_2,\ldots,\rho_{\ell(\rho)},0,\ldots,0)
$$
$$
\text{(resp., $\nu_\pm := (\rho_1,\rho_2,\ldots,\rho_{m-1},\pm \rho_{m})$).}
$$
Furthermore, $\{ h(S) \mid S \in \Sing(\lm,\rho) \}$ (resp., $\{ h(S)_+ \mid S \in \Sing(\lm,\rho) \}$) forms a basis of $H(\nu)$ (resp., $H(\nu_+)$). For each $S,S' \in \Sing(\lm,\rho)$, we have $h(S) = h(S')$ if and only if $S = S'$ (resp., $h(S)_+ = h(S')_+$ if and only if $S' = \Btil_1 \Btil_3 \cdots \Btil_{2m-1} S)$. When $\ell(\rho) = \frac{n}{2}$, we have $h(S)_- = -h(S')_-$ if and only if $S' = \Btil_1 \Btil_3 \cdots \Btil_{2m-1} S$.

Now, it is convenient to set $\Sing'(\lm,\rho)$ to be $\Sing(\lm,\rho)$ if $\ell(\rho) < \frac{n}{2}$, and to be a complete set of representatives for $\Sing(\lm,\rho)/{\sim}$ with respect to the equivalence relation given by
$$
S \sim S' \text{ if and only if } S' = \Btil_1 \Btil_3 \cdots \Btil_{2m-1} S
$$
if $\ell(\rho) = \frac{n}{2}$. Then, from discussion above, we obtain
\begin{align}\label{Irreducible decomposition of V(lm) at q=infty}
\ol{\clL}(\lm) = \bigoplus_{\rho \in \Par_m} \bigoplus_{S \in \Sing'(\lm,\rho)} \Ytil h(S).
\end{align}
Furthermore, for each $S \in \Sing'(\lm,\rho)$, the subspace $\Ytil h(S)$ corresponds to an $\frso_n$-submodule of $V^{\mathfrak{gl}_n}(\lm)$ isomorphic to $V^{\mathfrak{so}_n}(\rho)$.

\begin{lem}\label{BS = Bh(S)}
Let $S \in \Sing(\lm,\rho)$. Let $m'$ denote the maximal integer such that $2m' < n$ and  $\rho_{m'} \neq 0$. Then, we have
$$
\Btil_1 \Btil_2^2 \Btil_3 \Btil_4^2 \cdots \Btil_{2m'-1} \Btil_{2m'}^2 h(S) = S.
$$
Consequently, we have
$$
\Btil S = \Btil h(S).
$$
\end{lem}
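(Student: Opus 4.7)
My plan is to prove the displayed identity by induction on $m'$ and then derive the consequence $\Btil S = \Btil h(S)$ from it. The base case $m'=0$ reduces to showing $h(S)=S$: by maximality of $m'$, every index $i \in [1,m-1]$ appearing in the product defining $h(S)$ satisfies $\deg_{2i-1}(S) = \rho_i = 0$, hence $\Btil_{2i-1}S = 0$ by Proposition \ref{degree 0 in Stembridge crystals}(\ref{degree 0 in Stembridge crystals 1}), so each factor $(1+\Btil_{2i-1})$ acts as the identity on $S$.

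For the inductive step, I intend to establish the key reduction
$$
\Btil_{2m'-1}\Btil_{2m'}^2 \Bigl(\prod_{i=1}^{m-1}(1+\Btil_{2i-1})\, S\Bigr) \;=\; \prod_{\substack{i=1 \\ i\neq m'}}^{m-1}(1+\Btil_{2i-1})\, S,
$$
which peels off the factor indexed by $m'$. Once this is proved, I apply it in sequence for the indices $m', m'-1,\dots,1$; the outer operators $\Btil_{2k-1}\Btil_{2k}^2$ for $k < m'$ move past the remaining factors using Proposition \ref{degree 0 in Stembridge crystals}(\ref{degree 0 in Stembridge crystals 2}), and the product shrinks each time until only $S$ remains. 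To prove the key reduction, I expand the inner product as $\sum_{J \subseteq [1,m-1]} \Btil_J S$, where $\Btil_J := \prod_{i \in J} \Btil_{2i-1}$ is well-defined because the $\Btil_{2i-1}$'s pairwise commute by the same Proposition, and analyze the action of $\Btil_{2m'}^2$ term by term. The singular conditions (\ref{definition of singular element 1}) and (\ref{definition of singular element 2}) give $\deg_{2m'}(S)=0$, and axiom (\ref{definition of AI-crystals 3}) implies $\Btil_{2m'}$ commutes with $\Btil_{2i-1}$ unless $i \in \{m',m'+1\}$. Hence $\deg_{2m'}(\Btil_J S) = 0 + \sum_{i \in J \cap \{m',m'+1\}}(\pm 1)$ by axiom (\ref{definition of AI-crystals 2}), and combining axiom (\ref{definition of AI-crystals 1}) with Proposition \ref{degree 0 in Stembridge crystals}(\ref{degree 0 in Stembridge crystals 1}) gives that $\Btil_{2m'}^2$ acts as the identity on basis elements with $\deg_{2m'}>0$ and as zero otherwise. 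Thus only specific summands survive; a final application of $\Btil_{2m'-1}$, an involution on its non-vanishing locus, pairs with the factor $\Btil_{2m'-1}$ inside $\Btil_J$ to cancel it, yielding the shortened product on the right.

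The main obstacle will be the delicate bookkeeping of the signs in the $\pm 1$ degree changes. The singular condition (\ref{definition of singular element 3}) is used precisely here: it controls the direction of the degree changes produced by $(\Btil_{2i-1}\Btil_{2i})^{\rho_{i+1}}$ on $\deg_{2i+1}$, which in turn pins down which summands of $\sum_J \Btil_J S$ actually have $\deg_{2m'}(\Btil_J S)>0$ and ensures that after the cascade of reductions a unique surviving term emerges rather than several cancelling. Once the main identity is in hand, the consequence is immediate: $\Btil h(S) \subseteq \Btil S$ because $h(S) = \prod_{i=1}^{m-1}(1+\Btil_{2i-1})S$ is a polynomial in the $\Btil_i$'s applied to $S$, and $\Btil S \subseteq \Btil h(S)$ because the main identity exhibits $S = \Btil_1\Btil_2^2 \cdots \Btil_{2m'-1}\Btil_{2m'}^2 h(S)$ as an element of $\Btil h(S)$.
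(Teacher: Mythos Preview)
Your plan coincides with the paper's: reduce $h(S)$ to $\prod_{i \le m'}(1+\Btil_{2i-1})S$ using $\rho_i = 0$ for $i > m'$, then peel off the factors from $i=m'$ down to $i=1$. However, your execution of the inductive step is more elaborate than needed, and you misidentify which singular conditions enter.

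The expansion into $\sum_{J}\Btil_J S$ is superfluous. Since $|2k-(2i-1)|\geq 3$ and $|(2k-1)-(2i-1)|\geq 2$ for $i<k$, Proposition~\ref{degree 0 in Stembridge crystals}\,\eqref{degree 0 in Stembridge crystals 2} lets the whole block $\Btil_{2k-1}\Btil_{2k}^2$ commute past $\prod_{i<k}(1+\Btil_{2i-1})$ in one move; your ``key reduction'' thus collapses to the single local identity the paper proves directly,
\[
\Btil_{2k-1}\Btil_{2k}^2(1+\Btil_{2k-1})S = S \qquad (1\le k\le m').
\]
This uses only conditions~\eqref{definition of singular element 1} and~\eqref{definition of singular element 2}: the latter gives $\deg_{2k}(S)=0$, hence $\Btil_{2k}S=0$ and the left side becomes $\Btil_{2k-1}\Btil_{2k}^2\Btil_{2k-1}S$; the $\pm 1$ in axiom~\eqref{definition of AI-crystals 2} is forced to be $+1$ because degrees are nonnegative, so $\deg_{2k}(\Btil_{2k-1}S)=1$ and axiom~\eqref{definition of AI-crystals 1} gives $\Btil_{2k}^2\Btil_{2k-1}S=\Btil_{2k-1}S$; a final $\Btil_{2k-1}$ (nonzero since $\rho_k\neq 0$) returns $S$. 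There is no delicate sign bookkeeping, and condition~\eqref{definition of singular element 3} is not used anywhere in this lemma.
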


\begin{proof}
By the definition of $m'$, we have
$$
h(S) = (1 + \Btil_1)(1 + \Btil_3) \cdots (1 + \Btil_{2m'-1})S,
$$
and hence,
$$
\Btil_1 \Btil_2^2 \Btil_3 \Btil_4^2 \cdots \Btil_{2m'-1} \Btil_{2m'}^2 h(S) = \Btil_1 \Btil_2^2(1+\Btil_1) \Btil_3 \Btil_4^2(1+\Btil_3) \cdots \Btil_{2m'-1} \Btil_{2m'}^2(1+\Btil_{2m'-1})S.
$$
Here, we used Proposition \ref{degree 0 in Stembridge crystals} \eqref{degree 0 in Stembridge crystals 2}. Therefore, it suffices to show that
$$
\Btil_{2i-1} \Btil_{2i}^2(1+\Btil_{2i-1})S = S
$$
for all $i \in [1,m']$.

Let $i \in [1,m']$. Then, we have $\deg_{2i}(S) = 0$ and $\deg_{2i-1}(S) \neq 0$. By Proposition \ref{degree 0 in Stembridge crystals} \eqref{degree 0 in Stembridge crystals 1}, we have $\Btil_{2i}S = 0$ and $\Btil_{2i-1} S \neq 0$, and hence,
$$
\Btil_{2i-1} \Btil_{2i}^2(1+\Btil_{2i-1})S = \Btil_{2i-1} \Btil_{2i}^2 \Btil_{2i-1}S.
$$
By Definition \ref{definition of AI-crystals} \eqref{definition of AI-crystals 2}, we must have
$$
\deg_{2i}(\Btil_{2i-1} S) = 1,
$$
and hence,
$$
\Btil_{2i}^2 \Btil_{2i-1} S = \Btil_{2i-1} S.
$$
Therefore, we have
$$
\Btil_{2i-1} \Btil_{2i}^2 \Btil_{2i-1}S = \Btil_{2i-1}^2 S = S,
$$
as desired. This completes the proof.
\end{proof}

\subsection{Connectedness and the character of $\SST_n^{\AI}(\rho)$}
In this subsection, we show that the $\AI$-crystal $\SST_n^{\AI}(\rho)$ is connected, and its character coincides with $\ch V^{\mathfrak{so}_n}(\rho)$ for each $\rho \in \Par_m$.

Let $\rho \in \Par_m$. Define $T_\rho \in \SST_n^{\AI}(\rho)$ by
$$
T_\rho(i,j) = \begin{cases}
a_{2i-1} \qu & \IF j = 1, \\
2i \qu & \IF j > 1,
\end{cases}
$$
where $a_{2i-1} \in \{ 2i-1,2i \}$ is such that
$$
\rho_{i} - \delta_{a_{2i-1},2i-1} - \delta_{a_{2i+1},2i+1} \in \Z_{\ev}.
$$
Here, we set $a_{2\ell(\rho)+1} = 2\ell(\rho)+2$. Note that such $a_{2i-1}$'s are uniquely determined.

\begin{ex}\normalfont
Let $\rho \in \Par_m$.
\begin{enumerate}
\item If $\ell(\rho) = 1$, then
$$
T_\rho = \ytableausetup{boxsize=normal,centertableaux}
\begin{ytableau}
a & 2 & 2 & \cdots & 2
\end{ytableau}, \qu a := \begin{cases}
2 \qu & \IF \rho_1 \in \Z_{\ev}, \\
1 \qu & \IF \rho_1 \in \Z_{\odd}.
\end{cases}
$$
\item If $\ell(\rho) = 2$, then
\begin{align}
\begin{split}
&T_\rho = \ytableausetup{boxsize=normal,centertableaux}
\begin{ytableau}
a & 2 & 2 & \cdots & 2 & 2 & \cdots & 2 \\
b & 4 & 4 & \cdots & 4
\end{ytableau}, \\
&b := \begin{cases}
4 \qu & \IF \rho_2 \in \Z_{\ev}, \\
3 \qu & \IF \rho_2 \in \Z_{\odd},
\end{cases} \qu a := \begin{cases}
2 \qu & \IF \rho_1-\delta_{b,3} \in \Z_{\odd}, \\
1 \qu & \IF \rho_1-\delta_{b,3} \in \Z_{\ev}.
\end{cases}
\end{split} \nonumber
\end{align}
\end{enumerate}
\end{ex}

\begin{lem}\label{K1(Trho)}
Let $\rho \in \Par_m$ be such that $\ell(\rho) = \frac{n}{2}$. Then, we have
$$
K_1(T_\rho) = \Btil_1 \Btil_3 \cdots \Btil_{2m-1} T_\rho.
$$
\end{lem}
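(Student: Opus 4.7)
The plan is to pass to the column embedding $\SST_n(\rho) \hookrightarrow \SST_n(1^m) \otimes \SST_n(1^{d_2}) \otimes \cdots \otimes \SST_n(1^{d_{\rho_1}})$ and write $T_\rho = C_1 \otimes C_2 \otimes \cdots \otimes C_{\rho_1}$, where $C_1 = u_{a_1, a_3, \ldots, a_{2m-1}}$ (note $d_1 = m$) and $C_j = u_{2, 4, \ldots, 2d_j}$ for $j \geq 2$. Because each inner column $C_j$ with $j \geq 2$ contains no odd letter $2i-1$, we have $\Ftil_{2i-1}(C_j) = 0$, hence $\vphi_{2i-1}(C_j) = 0$ for every $i \in [1,m]$ and every $j \geq 2$. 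The $\frgl$-crystal tensor product formula $\vphi_i(b_1 \otimes b_2) = \vphi_i(b_1) + \max(0, \vphi_i(b_2) - \vep_i(b_1))$ then yields by induction on the number of columns that $\vphi_{2i-1}(C_2 \otimes \cdots \otimes C_{\rho_1}) = 0$ for every $i \in [1,m]$.

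Next, since $a_{2i-1}$ is exactly one element of $\{2i-1, 2i\}$, Lemma \ref{lemma for K-matrix} gives $\deg_{2i-1}(C_1) = 1$ and identifies $\Btil_{2i-1} C_1$ as the column obtained from $C_1$ by replacing $a_{2i-1}$ with the other element $a_{2i-1}^c$ of $\{2i-1, 2i\}$. The inequality $\deg_{2i-1}(C_1) = 1 > 0 = \vphi_{2i-1}(C_2 \otimes \cdots \otimes C_{\rho_1})$ combined with Proposition \ref{AI-crystal tensor gl-crystal} forces $\Btil_{2i-1}$ to act on $T_\rho$ through its action on the first tensor factor only. Moreover, Definition \ref{definition of AI-crystals}(3) guarantees $\deg_{2i-1}(\Btil_{2j-1} C_1) = \deg_{2i-1}(C_1) = 1$ whenever $i \neq j$, so the above inequality persists as we iterate, and Proposition \ref{degree 0 in Stembridge crystals}(2) shows that $\Btil_1, \Btil_3, \ldots, \Btil_{2m-1}$ pairwise commute on $C_1$. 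Applying Lemma \ref{lemma for K-matrix} once per step yields
\[
\Btil_1 \Btil_3 \cdots \Btil_{2m-1}(T_\rho) = u_{a_1^c, a_3^c, \ldots, a_{2m-1}^c} \otimes C_2 \otimes \cdots \otimes C_{\rho_1}.
\]

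Since $\{a_1, a_3, \ldots, a_{2m-1}\} \sqcup \{a_1^c, a_3^c, \ldots, a_{2m-1}^c\} = \bigsqcup_{i=1}^m \{2i-1, 2i\} = [1, n]$, the first tensor factor above is precisely $K(C_1)$. One also verifies directly that $T_\rho$ is an $\AI$-tableau ($d_1 = m$ and $a_{2i-1}^c \leq 2i = t_{i,2}$ for every $i \in [1, d_2]$), so Remark \ref{AI-condition in other words} gives $K_1(T_\rho) = K(C_1) C_2 \cdots C_{\rho_1}$, which is exactly the tableau represented by the tensor displayed above. The main obstacle is really the tensor-product bookkeeping: checking that each $\Btil_{2i-1}$ acts only on the first factor throughout the whole composition, and that $\vphi_{2i-1}$ of the tail tensor vanishes; everything else is a direct application of the single-column Lemma \ref{lemma for K-matrix}.
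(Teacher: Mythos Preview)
Your proof is correct and follows essentially the same approach as the paper's. Both arguments reduce to the observation that each side is obtained from $T_\rho$ by swapping the first-column entry $a_{2i-1}\in\{2i-1,2i\}$ with the other element $a_{2i-1}^c$ of that pair; the paper computes $K_1(T_\rho)$ directly from Remark~\ref{AI-condition in other words} and then declares the $\Btil$-side ``easily verified'', whereas you spell out that verification carefully via the tensor-product rule (Proposition~\ref{AI-crystal tensor gl-crystal}) and Lemma~\ref{lemma for K-matrix}.
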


\begin{proof}
By Remark \ref{AI-condition in other words}, $K_1(T_\rho)$ is obtained from $T_\rho$ by applying $K$ to the first column. Since the $(i,1)$-th entry of $T_\rho$ is either $2i-1$ or $2i$ for each $i \in [1,m]$, we have
$$
K_1(T_\rho)(i,j) = \begin{cases}
a'_{2i-1} \qu & \IF j = 1, \\
2i \qu & \IF j > 1,
\end{cases}
$$
where $a'_{2i-1} \in \{ 2i-1,2i \} \setminus \{a_{2i-1}\}$. Now, the assertion is easily verified.
\end{proof}

\begin{lem}\label{singular element of degree rho in SSTn(rho)}
Let $\rho,\sigma \in \Par_m$ and $S \in \SST_n^{\AI}(\sigma) \cap \Sing(\sigma,\rho)$. Then, we have $\sigma = \rho$, and either $S = T_\rho$ or $S = K_1(T_\rho)$.
\end{lem}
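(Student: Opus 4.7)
The plan is a direct combinatorial analysis, deducing the structure of $S$ from the three singular axioms together with the $\AI$-condition.

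First, I would use Proposition \ref{K1 is an AI-crystal morphism} and Lemma \ref{K1s on AI-semistandard tableau} to observe that, when $\ell(\sigma) = n/2$, $K_1(S)$ is again a singular $\AI$-tableau of the same shape and degree, while for $\ell(\sigma) < n/2$ it fails the $\AI$-condition (its shape has more than $m$ rows). This accounts for the two-element conclusion of the lemma and reduces the task to pinning $S$ down up to the swap $C_1 \leftrightarrow K(C_1)$ of its first column.

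The bulk of the argument shows that for every column $C_j$ of $S$ with $j \geq 2$, one has $C_j = (2, 4, \ldots, 2 d_j)$, the smallest $d_j$ even letters. Each column contributes $+$, $-$, or nothing to each $i$-signature according to which of $i, i+1$ it contains; by Proposition \ref{degree 0 in Stembridge crystals}, the singular condition $\deg_{2i}(S) = 0$ for $2i < n$ is equivalent to $\vep_{2i}(S) = 0$ and $\vphi_{2i}(S) \in 2 \Z$. An induction on $i$ then runs as follows: an odd letter $2i+1$ appearing in $C_j$ (with $j \geq 2$) without a cancelling $2i$ in the same column would introduce an unmatched $-$ in the $(2i)$-signature, contradicting $\vep_{2i}(S) = 0$; and the remaining possibility, that $C_j$ contains both $2i$ and $2i+1$ (which cancel internally in that signature), is excluded by the third singular axiom $\deg_{2i+1}((\Btil_{2i-1}\Btil_{2i})^{\rho_{i+1}} S) = 0$ combined with a careful analysis of how $\Btil_{2i-1}\Btil_{2i}$ acts on such a column. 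This inductive step is the main technical obstacle, since the even-degree conditions alone do not see the internally cancelled pair.

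With the columns $j \geq 2$ determined, counting the occurrences of the letter $2i$ in $S$ against the equation $\deg_{2i-1}(S) = \rho_i$ forces $\sigma_i = \rho_i$ for every $i$, hence $\sigma = \rho$. The first column then satisfies $c_i \in \{2i-1, 2i\}$ thanks to the $\AI$-inequality $t^c_{i,1} \leq t_{i,2} = 2i$ together with strictness of the column, and the even-parity constraints $\vphi_{2i}(S) \in 2 \Z$ recursively determine each $c_i$, matching the defining pattern of $a_{2i-1}$ in $T_\rho$. Combined with the initial $K_1$-reduction, this identifies $S$ with $T_\rho$ or, when $\ell(\sigma) = n/2$, with $K_1(T_\rho)$.
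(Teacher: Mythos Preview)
Your plan uses the same ingredients as the paper—the three singular axioms together with the $\AI$-condition on the first two columns—but organizes the argument differently: you treat the columns $j \geq 2$ first and then the first column, whereas the paper runs a single induction on the row index $l$, showing at each step that row~$l$ lies entirely in $\{2l-1, 2l\}$ and in fact equals $(s_l, 2l, \ldots, 2l)$, while simultaneously proving $s_{l+1} \neq 2l$.

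Two places where your separation leaves work that the paper's row-wise scheme absorbs. First, your deduction $c_i \in \{2i-1, 2i\}$ via the $\AI$-inequality $t^c_{i,1} \leq t_{i,2} = 2i$ is only available for $i \leq d_2$; for $d_2 < i \leq d_1$ there is no second-column entry to compare against. The paper fills exactly this range with its ``$s_{l+1} \neq 2l$'' contradiction: a violation forces (via the $\AI$-condition one row higher) $\sigma_l = 1$, and then the vanishing of $\deg_{2l-1}, \deg_{2l}, \ldots$ propagates to make the tail of the first column equal $2l-1, 2l, 2l+1, \ldots$ consecutively, overrunning the bound $\ell(\sigma) \leq m$. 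Second, your induction on $i$ as phrased excludes the odd letter $2i+1$ from $C_j$, but does not by itself force $C_j(i) = 2i$: nothing yet prevents $C_j(i) \geq 2i+2$, an even jump that contributes nothing to the $(2i)$-signature. This is the ``even $k$'' half of the paper's row-$l$ analysis, which invokes the third singular axiom in a way close to what you sketch for the paired case. Both gaps are patchable along the lines of the paper's arguments, so your column-wise architecture is viable; the paper's row-by-row induction is simply a clean way to interleave all of these steps at once.
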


\begin{proof}
Let us show by induction on $l \in [1,m]$ that in $S$, the letters $2l-1$ and $2l$ can appear only in the $l$-th row. Let $l \in [1,m]$. By our induction hypothesis, it holds that $\sh(S|_{[1,2(l-1)]}) = (\sigma_1,\sigma_2,\ldots,\sigma_{l-1})$. In particular, the $l$-th row or below consists of letters in $[2l-1,n]$.

First, we show that the $l$-th row consists of only $2l-1$ and $2l$. Assume contrary that $k > 2l$ appears in the $l$-th row. If $k$ is odd, then it must hold that $\vep_{k-1}(S) > 0$, and hence $\deg_{k-1}(S) > 0$, which contradicts Definition \ref{definition of singular element} \eqref{definition of singular element 2}. On the other hand, if $k$ is even, then it must hold that $\vep_{k-1}((\Btil_{k-2}\Btil_{k-3})^{\deg_{k-1}(S)}S) > 0$ (note that $\Btil_{k-2}$ and $\Btil_{k-3}$ do not change the entries other than $k-3,k-2,k-1$), and hence, $\deg_{k-1}((\Btil_{k-2}\Btil_{k-3})^{\deg_{k-1}(S)}S) > 0$, which contradicts Definition \ref{definition of singular element} \eqref{definition of singular element 3}. Thus, we see that the $l$-th row of $S$ consists of only $2l-1$ and $2l$.

Next, we show that
$$
S|_{\{2l-1,2l\}} = \ytableausetup{boxsize=normal,centertableaux}
\begin{ytableau}
s_l & 2l & 2l & \cdots & 2l
\end{ytableau},
$$
where $s_l := S(l,1) \in \{2l-1,2l\}$. By the argument above, we have
$$
S|_{\{2l-1,2l\}} = \ytableausetup{boxsize=normal,centertableaux}
\begin{ytableau}
\scriptstyle 2l-1 & \scriptstyle 2l-1 & \cdots & \cdots & \scriptstyle 2l-1 & 2l & 2l & \cdots & 2l \\
2l & 2l & \cdots & 2l,
\end{ytableau}
$$
where the first row consists of $\sigma_l$ boxes. By our induction hypothesis, for each $l' < l$, we have $s_{l'} := S(l',1) \in \{ 2l'-1,2l' \}$. Hence, the $(l',1)$-th entry of $K_1(S)$ is the unique letter $s'_{l'}$ in $\{ 2l'-1,2l' \} \setminus \{ s_{l'} \}$. Now, suppose that $s_{l+1} := S(l+1,1) \neq 2l$. Then, by observation above, the $(l,1)$-th entry $s'_l$ of $K_1(S)$ is the unique letter in $\{ 2l-1,2l \} \setminus \{ s_l \}$. Since $S$ is an $\AI$-tableau, we have
$$
s'_l \leq S(l,2) \leq \cdots \leq S(l,\rho_l).
$$
This, together with the semistadardness condition on the $l$-th row
$$
s_l \leq S(l,2) \leq \cdots \leq S(l,\rho_l),
$$
implies that $S(l,2) \geq 2l$. Therefore, we must have
$$
S|_{\{2l-1,2l\}} = \ytableausetup{boxsize=normal,centertableaux}
\begin{ytableau}
s_l & 2l & 2l & \cdots & 2l
\end{ytableau},
$$
as desired.

It remains to show that $s_{l+1} \neq 2l$. If $s_{l+1} = 2l$, then it must hold that $s_{l} = 2l-1$, and consequently,
$$
s'_{l} > 2l.
$$
Since $S$ is an $\AI$-tableau and the $l$-th row consists of $2l-1$ and $2l$, this implies that $\sigma_l = 1$. Therefore, the $l$-th row and below of $S$ is of the form
\begin{align}\label{first column of S}
\ytableausetup{boxsize=0.8cm,centertableaux}
\begin{ytableau}
\scriptstyle 2l-1 \\
2l \\
s_{l+2} \\
\vdots \\
s_{\ell(\sigma)}
\end{ytableau}.
\end{align}
This shows that $\deg_{2l-1}(S) = 0$. Since $\rho_l = \deg_{2l-1}(S)$ and $\rho$ is a partition, we obtain $\rho = (\rho_1,\rho_2,\ldots,\rho_{l-1})$. This implies that $\deg_i(S) = 0$ for all $i \in [2l-1,n-1]$. From \eqref{first column of S}, we see that $\deg_{2l}(S) = 0$ if and only if $s_{l+2} = 2l+1$. Proceeding in this way, we must have $s_{l+k} = 2l-1+k$ for all $k \in [1,n-2l+1]$, which is impossible because $l+(n-2l+1) = n-l+1 > m \geq \ell(\sigma)$. Thus, our claim follows.

So far, we have obtained that
$$
S(i,j) = \begin{cases}
s_i \qu & \IF j = 1, \\
2i \qu & \IF j > 1
\end{cases}
$$
for some $s_i \in \{ 2i-1,2i \}$. From this, one can easily see that
$$
\deg_{2i-1}(S) = \sigma_i
$$
for all $i \in [1,m]$. On the other hand, since $\deg_{2i-1}(S) = \rho_i$, we obtain
$$
\sigma = \rho.
$$

In order to complete the proof, we need to determine $s_l$ for all $l \in [1,\ell(\rho)]$. First, suppose that $\ell(\rho) < \frac{n}{2}$. In this case, $2\ell(\rho) \in [1,n-1]$, and hence, we must have
$$
\deg_{2\ell(\rho)}(S) = 0.
$$
This is equivalent to that
$$
\rho_{\ell(\rho)} - \delta_{s_{\ell(\rho)},2\ell(\rho)-1} \in \Z_{\ev}.
$$
Similarly, the constraint that $\deg_{2(\ell(\rho)-1)}(S) = 0$ is equivalent to that
$$
\rho_{\ell(\rho)-1} - \delta_{s_{\ell(\rho)-1},2\ell(\rho)-3} - \delta_{s_{\ell(\rho)},2\ell(\rho)-1} \in \Z_{\ev}.
$$
Proceeding in this way, we see that
$$
\rho_l - \delta_{s_l,2l-1} - \delta_{s_{l+1},2l+1} \in \Z_{\ev},
$$
for all $l \in [1,\ell(\rho)]$, where we set $s_{\ell(\rho)+1} := 2\ell(\rho)+2$. This implies that
$$
S = T_\rho,
$$
as desired.

Next, suppose that $\ell(\rho) = \frac{n}{2}$. In a similar way to above, we see that
$$
\rho_l - \delta_{s_l,2l-1} - \delta_{s_{l+1},2l+1} \in \Z_{\ev},
$$
for all $l \in [1,\ell(\rho)-1]$. This implies that we have $S = T_\rho$ if $\rho_m - \delta_{s_m,n-1} \in \Z_{\ev}$, or $S = K_1(T_\rho)$ otherwise. Thus, the proof completes.
\end{proof}

\begin{prop}\label{characterization of singular vector in terms of std}
Let $\lm \in \Par_n$, $\rho \in \Par_m$, and $S \in \Sing(\lm,\rho)$. Then, we have either $\std(S) = T_\rho$ or $\std(S) = K_1(T_\rho)$.
\end{prop}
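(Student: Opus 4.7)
The plan is to exploit the fact that $\std$ is a morphism of $\AI$-crystals (Proposition \ref{PAI-symbol is AI-crystal morphism}) in order to transfer the singular-element conditions from $S$ to $\std(S)$, and then to invoke the classification of singular $\AI$-tableaux of degree $\rho$ just obtained in Lemma \ref{singular element of degree rho in SSTn(rho)}.

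The first step is to write $\std(S) \in \SST_n^{\AI}(\sigma)$ for the (unique) partition $\sigma \in \Par_m$ determined by Lemma \ref{Existence of AI-semistandardization}. Since $\std$ preserves the structure maps $\deg_i$ and commutes with each $\Btil_i$, one has the two identities
\[
\deg_i(\std(S)) = \deg_i(S), \qquad \Btil_{j_1}\cdots \Btil_{j_r}(\std(S)) = \std(\Btil_{j_1}\cdots \Btil_{j_r} S)
\]
for all $i$ and all sequences $j_1,\ldots,j_r \in [1,n-1]$. In particular each of the three conditions in Definition \ref{definition of singular element} is built entirely out of these two kinds of data, so it survives application of $\std$. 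Thus $\std(S)$ is a singular element of degree $\rho$: condition (1) reads $\deg_{2i-1}(\std(S)) = \rho_i$, condition (2) reads $\deg_{2i}(\std(S)) = 0$ whenever $2i < n$, and condition (3) is obtained from the chain
\[
\deg_{2i+1}\bigl((\Btil_{2i-1}\Btil_{2i})^{\rho_{i+1}} \std(S)\bigr) = \deg_{2i+1}\bigl(\std((\Btil_{2i-1}\Btil_{2i})^{\rho_{i+1}} S)\bigr) = 0.
\]
Consequently $\std(S) \in \SST_n^{\AI}(\sigma) \cap \Sing(\sigma,\rho)$, and Lemma \ref{singular element of degree rho in SSTn(rho)} immediately forces $\sigma = \rho$ and $\std(S) \in \{T_\rho, K_1(T_\rho)\}$.

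I do not expect any serious obstacle: the argument is essentially a one-line transfer through the $\AI$-crystal morphism $\std$, followed by an appeal to the classification lemma. The only mildly delicate point is the case in which some iterated operator $(\Btil_{2i-1}\Btil_{2i})^{\rho_{i+1}}$ annihilates $S$; but since a morphism of $\AI$-crystals sends $0$ to $0$ by convention, the equality $\std((\Btil_{2i-1}\Btil_{2i})^{\rho_{i+1}} S) = (\Btil_{2i-1}\Btil_{2i})^{\rho_{i+1}} \std(S)$ continues to hold, and condition (3) of Definition \ref{definition of singular element} remains compatible with this convention. So nothing more than bookkeeping is required.
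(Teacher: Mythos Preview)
Your proposal is correct and follows essentially the same route as the paper: the paper's proof simply observes that $\std(S) \in \SST_n^{\AI}(\sigma) \cap \Sing(\sigma,\rho)$ and then invokes Lemma \ref{singular element of degree rho in SSTn(rho)}. You have merely unpacked the first step by explicitly verifying the three singular conditions via the $\AI$-crystal morphism property of $\std$, which is exactly what is implicit in the paper's one-line assertion.
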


\begin{proof}
Let $\sigma \in \Par_m$ denote the shape of $\std(S)$. Then, we have
$$
\std(S) \in \SST_n^{\AI}(\sigma) \cap \Sing(\sigma,\rho).
$$
Then, the assertion follows from Lemma \ref{singular element of degree rho in SSTn(rho)}.
\end{proof}

\begin{lem}\label{Btil singular = Ytil highest}
Let $\lm \in \Par_n$, $\rho \in \Par_m$, and $S \in \Sing(\lm,\rho)$. Then, we have $\Btil S = \Ytil h(S)$.
\end{lem}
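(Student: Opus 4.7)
The inclusion $\Btil S \subset \Ytil h(S)$ is the easier direction. By Lemma \ref{BS = Bh(S)}, $\Btil S = \Btil h(S)$, and since $\Btil v \subset \Ytil v$ for any highest weight vector $v$ (recorded in Subsection \ref{subsection results for iquantum group}), we apply this with $v = h(S)$ when $\ell(\rho) < \frac{n}{2}$. When $\ell(\rho) = \frac{n}{2}$, the expansion $h(S) = \frac{1}{2}(h(S)_+ + h(S)_-)$ combined with $\Btil h(S)_\pm \subset \Ytil h(S)_\pm \subset \Ytil h(S)$ gives the inclusion in that case too.

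For the reverse inclusion $\Ytil h(S) \subset \Btil S$, the plan is to show that $\Btil S$ is stable under each operator $\Ytil_j$, $j \in \Itil$. Since $h(S) \in \Btil h(S) = \Btil S$ (take the empty word in the definition of $\Btil h(S)$), this stability would immediately yield $\Ytil_{j_1} \cdots \Ytil_{j_r} h(S) \in \Btil S$ for every sequence $j_1, \ldots, j_r \in \Itil$, which is the desired inclusion. To prove the stability, I would exploit the local description of the $\Ytil_j$'s from \cite{W21b}: $\Ytil_{2k}$ is constructed out of $\Etil_i, \Ftil_i$ for $i \in \{ 2k-1, 2k \}$, while $\Ytil_{(2k,\pm)}$ involves only indices $i \in \{ 2k-1, 2k, 2k+1 \}$. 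Recalling that $\Btil_i$ coincides with $\Etil_i$ or $\Ftil_i$ according to the parity of $\vphi_i$, one rewrites each $\Ytil_j$, when applied to a weight vector $b$, as a composition of $\Btil_i$'s, so that $\Ytil_j b \in \Btil b \subset \Btil S$; extending by linearity completes the argument.

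The main obstacle is this last step: the rank-$2$ and rank-$3$ formulas for $\Ytil_{2k}$ and $\Ytil_{(2k,\pm)}$ involve nontrivial combinations of $\Etil_i, \Ftil_i$, so one must carefully analyze how they translate into $\Btil_i$-words on each weight space, with the parities of $\vphi_i$ governing the conversion. This parity-based case analysis, relying on the explicit formulas from \cite{W21b}, is the crux of the argument.
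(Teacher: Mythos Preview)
Your setup matches the paper's: the easy inclusion via Lemma~\ref{BS = Bh(S)} and the fact $\Btil h(S) \subset \Ytil h(S)$, and the reduction of the reverse inclusion to showing that $\Btil S$ is stable under each $\Ytil_j$. The paper also exploits locality to reduce to $(n,j) \in \{(3,2),(4,(2,\pm))\}$.

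Where you and the paper part ways is in how to establish the $\Ytil_j$-stability. You propose to convert the $\Etil_i,\Ftil_i$'s appearing in the formula for $\Ytil_j$ into $\Btil_i$'s by tracking the parity of $\vphi_i$. This is risky: since $\Btil_i$ is an involution on each $i$-string, iterating $\Btil_i$ alone only reaches $b$ and $\Btil_i b$, never $\Etil_i^2 b$ or $\Ftil_i^2 b$. The $\Ytil_j$'s from \cite{W21b} are genuine linear operators built from several Kashiwara operators, and there is no a priori mechanism by which a parity bookkeeping turns an arbitrary such expression into a $\Btil$-word. So the ``crux'' you flag may not close by the method you describe; at minimum it requires the explicit formulas, which are not reproduced here, and a nontrivial argument beyond parity tracking.

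The paper sidesteps this entirely with a dimension count. In low rank, Proposition~\ref{characterization of singular vector in terms of std} together with the connectedness Lemmas~\ref{Connectedness of SST3AI(l)}--\ref{Connectedness of SST4AI(l1,l2)} shows that $T \mapsto \std(T)$ is a surjective morphism of $\AI$-crystals $C^{\AI}(S) \twoheadrightarrow \SST_n^{\AI}(\rho)$, giving $\dim \Btil S \geq |\SST_n^{\AI}(\rho)|$. On the other hand $\dim \Btil S \leq \dim \Ytil h(S) = \dim V^{\mathfrak{so}_n}(\rho)$, and the explicit counts \eqref{size of SST3AI(l)}--\eqref{size of SST4AI(l1,l2)} show this equals $|\SST_n^{\AI}(\rho)|$. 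Equality of dimensions plus the known inclusion finishes the proof without ever unpacking the $\Ytil_j$'s.
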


\begin{proof}
Since we know $\Btil S = \Btil h(S) \subset \Ytil h(S)$ (by Lemma \ref{BS = Bh(S)}) and $h(S) \in \Btil S$, it suffices to show that $\Ytil_j(\Btil S) \subset \Btil S$ for all $j \in \Itil$. Furthermore, since $\Ytil_j$'s are defined locally (see Subsection \ref{subsection results for iquantum group}), it suffices to prove for the case when $n = 3$ and $j = 2$, and when $n = 4$ and $j = (2,\pm)$.

Let $(n,j) \in \{ (3,2), (4,(2,\pm)) \}$. By Proposition \ref{characterization of singular vector in terms of std} and Lemmas \ref{Connectedness of SST3AI(l)}--\ref{Connectedness of SST4AI(l1,l2)}, we see that
$$
C^{\AI}(S) \rightarrow \SST_n^{\AI}(\rho);\ T \mapsto \std(T)
$$
is a surjective morphism of $\AI$-crystals. This implies that
$$
\dim \Btil S \geq |\SST_n^{\AI}(\rho)|
$$
since we have $\Btil S = \C C^{\AI}(S)$. On the other hand, since $\Btil S \subset \Ytil h(S)$, we have
$$
\dim \Btil S \leq \dim \Ytil h(S) = \dim V^{\mathfrak{so}_n}(\rho) = |\SST_n^{\AI}(\rho)|.
$$
The last equality follows from equations \eqref{size of SST3AI(l)}--\eqref{size of SST4AI(l1,l2)}. Therefore, we obtain $\dim \Btil S = \dim \Ytil h(S)$, and hence,
$$
\Btil S = \Ytil h(S).
$$
Since the right-hand side is closed under $\Ytil_j$, so is the left-hand side. Thus, the proof completes.
\end{proof}

\begin{theo}\label{main theorem}
Let $\rho \in \Par_m$. Then, the following hold.
\begin{enumerate}
\item\label{main theorem 1} $\SST_n^{\AI}(\rho)$ is connected.
\item $\ch_{\AI} \SST_n^{\AI}(\rho) = \ch V^{\mathfrak{so}_n}(\rho)$.
\item Suppose that $\ell(\rho) < \frac{n}{2}$. Then, we have $\ch_{\AI} \SST_n^{\AI}(\rho) = \ch_{\frso_n} V^{\frso_n}(\nu_\rho)$.
\item Suppose that $\ell(\rho) = \frac{n}{2}$. Then, $\{ T + K_1(T) \mid T \in \SST_n^{\AI}(\rho) \}$ is a connected $\AI$-crystal, and $\ch_{\AI} \SST_n^{\AI}(\rho) = \ch_{\frso_n} V^{\frso_n}(\nu_\rho^+)$.
\end{enumerate}
\end{theo}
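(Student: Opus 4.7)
The overall strategy is to show $\SST_n^{\AI}(\rho) = C^{\AI}(T_\rho)$ inside the ambient $\frgl_n$-crystal $\SST_n(\rho)$, and then invoke Lemma \ref{Btil singular = Ytil highest} with $S = T_\rho$ and $\lm = \rho$. The element $T_\rho$ is simultaneously an $\AI$-tableau and a singular element of degree $\rho$; in fact, by Lemma \ref{singular element of degree rho in SSTn(rho)}, $T_\rho$ and (when $\ell(\rho) = n/2$) $K_1(T_\rho)$ exhaust the singular elements of degree $\rho$ in $\SST_n(\rho)$. This reduces every assertion in the theorem to an analysis of a single $\AI$-connected component.

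For part (1), the inclusion $C^{\AI}(T_\rho) \subseteq \SST_n^{\AI}(\rho)$ is Proposition \ref{SSTnAI is AI-crystal}. For the reverse, pick $T \in \SST_n^{\AI}(\rho)$ and locate it in some $C^{\AI}(S)$ with $S \in \Sing(\rho,\sigma)$ using the decomposition \eqref{Irreducible decomposition of V(lm) at q=infty}. Applying the $\AI$-crystal morphism $\std$ of Proposition \ref{PAI-symbol is AI-crystal morphism} and noting $\std(T) = T$, Proposition \ref{characterization of singular vector in terms of std} forces $\std(S)$ to be an $\AI$-tableau of shape $\rho$, and hence to lie in $\{T_\rho, K_1(T_\rho)\}$. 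When $\ell(\rho) < n/2$, $K_1(T_\rho)$ has a first column of length $n - \ell(\rho) > m$ and fails the $\AI$-condition, forcing $\std(S) = T_\rho$; when $\ell(\rho) = n/2$, Lemma \ref{K1(Trho)} rewrites $K_1(T_\rho) = \Btil_1 \Btil_3 \cdots \Btil_{2m-1} T_\rho$, which lies in $C^{\AI}(T_\rho)$. Either way $T \in C^{\AI}(T_\rho)$, proving connectedness.

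Parts (2) and (3) then follow by identifying $\C\SST_n^{\AI}(\rho) = \C C^{\AI}(T_\rho) = \Btil T_\rho$ with $\Ytil h(T_\rho)$ via Lemma \ref{Btil singular = Ytil highest}, and using the statement from Subsection \ref{subsection results for iquantum group} that $\Ytil h(T_\rho)$ is an $\frso_n$-submodule of $V^{\frgl_n}(\rho)$ isomorphic to $V^{\frso_n}(\rho)$ (which unfolds to $V^{\frso_n}(\nu_\rho)$ when $\ell(\rho) < n/2$, and to $V^{\frso_n}(\nu_\rho^+) \oplus V^{\frso_n}(\nu_\rho^-)$ when $\ell(\rho) = n/2$, with the latter case handled by the dimension count $\dim \Ytil h(T_\rho) = |\SST_n^{\AI}(\rho)|$ and the splitting $h(T_\rho) = h(T_\rho)_+ + h(T_\rho)_-$). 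Taking $\frso_n$-characters and invoking the equality $\ch_{\AI} = \ch_{\frso_n}$ for $\AI$-subcrystals of $\frgl$-crystals (from the end of Subsection \ref{Subsection AI-crystal}) yields the desired character identities.

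For part (4), the involution $K_1$ on $\SST_n^{\AI}(\rho)$ has no fixed points, since $K_1(T) = T$ would require the first column of $T$ to equal its own complement in $[1,n]$; hence $\{T + K_1(T)\}$ has cardinality $\tfrac{1}{2}|\SST_n^{\AI}(\rho)|$ and inherits a well-defined $\AI$-crystal structure via $\Btil_i(T + K_1(T)) := \Btil_i T + K_1(\Btil_i T)$ and $\deg_i(T + K_1(T)) := \deg_i(T)$, using that $K_1$ commutes with each $\Btil_j$ by Proposition \ref{K1 is an AI-crystal morphism}. Connectedness is then inherited from part (1) through the surjective $\AI$-crystal morphism $T \mapsto T + K_1(T)$. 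The remaining character identification is the main obstacle: one must recognize $\C\{T + K_1(T)\}$ as the $(+1)$-eigenspace of the linear extension of $K_1$ on $\Ytil h(T_\rho)$ and match this eigenspace decomposition with the irreducible $\frso_n$-summand splitting $\Ytil h(T_\rho) = V^{\frso_n}(\nu_\rho^+) \oplus V^{\frso_n}(\nu_\rho^-)$, which requires comparing $K_1$ with the composition $\Btil_1 \Btil_3 \cdots \Btil_{2m-1}$ globally on $C^{\AI}(T_\rho)$ rather than just at the singular vertex $T_\rho$.
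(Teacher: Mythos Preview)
Your argument for parts (1)--(3) is essentially the paper's own, up to cosmetic differences. Two small points: first, equation \eqref{Irreducible decomposition of V(lm) at q=infty} decomposes $\ol{\clL}(\rho)$ into the spaces $\Ytil h(S)$, not into $\C C^{\AI}(S)$; you need Lemma \ref{Btil singular = Ytil highest} to identify the two before you can place $T$ in some $C^{\AI}(S)$. Second, the detour through $\std$ is unnecessary: once $T \in C^{\AI}(S)$ with $T \in \SST_n^{\AI}(\rho)$, closure of $\SST_n^{\AI}(\rho)$ under the $\Btil_i$'s (Proposition \ref{SSTnAI is AI-crystal}) already gives $S \in \SST_n^{\AI}(\rho)$, so $\std(S)=S$ and Lemma \ref{singular element of degree rho in SSTn(rho)} applies directly to $S$.

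For part (4) your connectedness argument matches the paper's, but you leave the character identity unresolved, describing it as requiring a \emph{global} comparison of $K_1$ with $\Btil_1\Btil_3\cdots\Btil_{2m-1}$ on $C^{\AI}(T_\rho)$. The paper avoids this entirely: only the \emph{single} identity $K_1(T_\rho)=\Btil_1\Btil_3\cdots\Btil_{2m-1}T_\rho$ of Lemma \ref{K1(Trho)} is needed. From it one reads off
\[
h(T_\rho)+h(K_1(T_\rho)) \;=\; h(T_\rho)+h(\Btil_1\Btil_3\cdots\Btil_{2m-1}T_\rho) \;=\; 2\,h(T_\rho)_+,
\]
so the highest weight vector $h(T_\rho)_+$ of weight $\nu_\rho^+$ already lies in $\C\{T+K_1(T)\mid T\in\SST_n^{\AI}(\rho)\}$. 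Since $K_1$ is an $\AI$-automorphism, this span is $\Btil$-stable and hence contains $\Btil h(T_\rho)_+$; the reverse inclusion and the character identity then follow from Lemma \ref{Btil singular = Ytil highest} (applied to each singular element $T_\rho$, $K_1(T_\rho)$ separately) together with the equality $\dim V^{\frso_n}(\nu_\rho^+)=\dim V^{\frso_n}(\nu_\rho^-)=\tfrac12|\SST_n^{\AI}(\rho)|$. No global intertwining of $K_1$ with the product $\Btil_1\Btil_3\cdots\Btil_{2m-1}$ is required.
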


\begin{proof}
Let us prove the first assertion. By equation \eqref{Irreducible decomposition of V(lm) at q=infty} and Lemma \ref{Btil singular = Ytil highest}, we see that
$$
\ol{\clL}(\rho) = \bigoplus_{\sigma \in \Par_m} \bigoplus_{S \in \Sing'(\rho,\sigma)} \C C^{\AI}(S).
$$
This implies that for each $T \in \SST_n(\rho)$, there exists a unique $\sigma \in \Par_m$ and $S \in \Sing'(\rho,\sigma)$ such that $T \in C^{\AI}(S)$. In particular, since $\SST_n^{\AI}(\rho)$ is closed under $\Btil_i$'s, each $T \in \SST_n^{\AI}(\rho)$ is connected to an element of $\SST_n^{\AI}(\rho) \cap \Sing'(\rho,\sigma)$ for some $\sigma \in \Par_m$. By Lemma \ref{singular element of degree rho in SSTn(rho)}, we see that
$$
|\SST_n^{\AI}(\rho) \cap \Sing'(\rho,\sigma)| \leq \delta_{\rho,\sigma}.
$$
Therefore, each $T \in \SST_n^{\AI}(\rho)$ is connected to the element of $\SST_n^{\AI}(\rho) \cap \Sing'(\rho,\rho)$. This implies that $\SST_n^{\AI}(\rho)$ is connected.

Next, Let $S \in \SST_n^{\AI}(\rho) \cap \Sing'(\rho,\rho)$. From Lemma \ref{Btil singular = Ytil highest} and the first assertion, we have
$$
\Ytil h(S) = \Btil S = \C \SST_n^{\AI}(\rho).
$$
Then, the second assertion is clear from discussion after equation \eqref{Irreducible decomposition of V(lm) at q=infty}.

The third assertion is clear from the second assertion and the definitions.

Finally, assume that $\ell(\rho) = \frac{n}{2}$. Then, that $\{ T + K_1(T) \mid T \in \SST_n^{\AI}(\rho) \}$ is a connected $\AI$-crystal follows from the facts that $K_1$ is an automorphism of $\AI$-crystal on $\SST_n^{\AI}(\rho)$, and that $\SST_n^{\AI}(\rho)$ is connected. The assertion concerning characters follows from the fact that
$$
h(S) + h(K_1(S)) = h(T_\rho) + h(\Btil_1 \Btil_3 \cdots \Btil_{2m-1} T_\rho) = 2h(T_\rho)_+
$$
is a highest weight vector of weight $\nu$, where $S \in \SST_n^{\AI}(\rho) \cap \Sing'(\rho,\rho)$ (see also Lemma \ref{K1(Trho)}). Thus, the proof completes.
\end{proof}

\begin{rem}\normalfont
During the proof of Theorem \ref{main theorem}, we obtained
$$
\SST_n^{\AI}(\rho) \cap \Sing(\rho,\sigma) = \begin{cases}
\emptyset \qu & \IF \sigma \neq \rho, \\
\{T_\rho\} \qu & \IF \sigma = \rho \AND \ell(\rho) < \frac{n}{2}, \\
\{T_\rho,K_1(T_\rho)\} \qu & \IF \sigma = \rho \AND \ell(\rho) = \frac{n}{2}.
\end{cases}
$$
\end{rem}

\section{Robinson-Schensted type correspondence}\label{Section RS correspondence}
In this section, we generalize the Robinson-Schensted correspondence to the setting of $\AI$-crystals. This tells us how various $\AI$-crystals decompose into their connected components.

\subsection{Insertion scheme}
Let $\lm \in \Par_n$. Then, the map
$$
\SST_n(\lm) \otimes \SST_n(1) \rightarrow \bigsqcup_{\substack{\mu \in \Par_n \\ \lm \triangleleft \mu}} \SST_n(\mu);\ T \otimes \Sbox{l} \mapsto (T \leftarrow l)
$$
is an isomorphism of $\frgl$-crystals. The aim of this subsection is to provide an $\AI$-crystal analogue of this isomorphism, which will play a central role when generalizing the Robinson-Schensted correspondence.

Let $\rho \in \Par_m$ and consider the $\AI$-crystal $\SST_n^{\AI}(\rho) \otimes \SST_n(1)$. In order to analyze its structure, let us recall the following fact, which is the special case of \cite[Lemma 7]{Ka90}.

\begin{lem}
Let $\nu \in X_{\frso_n,\Int}^+$. For each $k \in [1,m]$, set
$$
\eps_{2k-1} := (\overbrace{0,\ldots,0}^{k-1},1,0,\ldots,0) \in X_{\frso_n,\Int}.
$$
Then, we have
$$
V^{\frso_n}(\nu) \otimes V^{\frso_n}(\eps_1) \simeq \bigoplus_{\xi} V^{\frso_n}(\nu+\xi),
$$
where $\xi$ runs through $X_{\frso_n,\Int}$ satisfying the following:
\begin{enumerate}
\item $V^{\frso_n}(\eps_1)_\xi \neq 0$.
\item $V^{\frso_n}(\eps_1)_{\xi+(1+\nu_{2i-1}-\nu_{2i+1})(\eps_{2i-1}-\eps_{2i+1})} = 0$ for all $i \in [1,m-1]$.
\item If $n \in \Z_{\ev}$, then $V^{\frso_n}(\eps_1)_{\xi+(1+\nu_{2m-3}+\nu_{2m-1})(\eps_{2m-3}+\eps_{2m-1})} = 0$.
\item If $n \in \Z_{\odd}$, then $V^{\frso_n}(\eps_1)_{\xi+(1+2\nu_{2m-1})\eps_{2m-1}} = 0$.
\end{enumerate}
\end{lem}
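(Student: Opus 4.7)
The plan is to apply the Pieri-type decomposition rule for tensoring with a weight-multiplicity-one representation, and then translate the resulting dominance condition on $\nu+\xi$ into the combinatorial vanishing conditions (2)--(4).

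First I would identify $V^{\frso_n}(\eps_1)$ as the $n$-dimensional vector (defining) representation of $\frso_n$. Its set of weights is $\{\pm \eps_{2k-1} \mid k \in [1,m]\}$, together with the zero weight when $n$ is odd, and every weight has multiplicity one. In type $D$ ($n$ even) this module is minuscule, while in type $B$ ($n$ odd) its nonzero weights form a single Weyl orbit and the zero weight contributes a one-dimensional weight space.

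Next, I would invoke the classical Brauer--Klimyk (Pieri) rule, applicable to a representation with this kind of weight distribution, which gives
$$
V^{\frso_n}(\nu)\otimes V^{\frso_n}(\eps_1) \;\simeq\; \bigoplus_{\xi} V^{\frso_n}(\nu+\xi),
$$
the sum being over weights $\xi$ of $V^{\frso_n}(\eps_1)$ such that $\nu+\xi$ is dominant integral. One can derive this from the Weyl character formula combined with the Weyl denominator identity, or more conceptually from Kashiwara's crystal tensor product rule applied to $B(\nu)\otimes B(\eps_1)$, the latter being essentially the content of \cite[Lemma~7]{Ka90}. This accounts for condition (1) together with dominance of $\nu+\xi$.

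To finish, I would translate dominance of $\nu+\xi$ into (2)--(4). The simple coroots of $\frso_n$ are $b_{2i-1}-b_{2i+1}$ for $i \in [1,m-1]$, together with $b_{2m-3}+b_{2m-1}$ in type $D$ or $2b_{2m-1}$ in type $B$. For $i \in [1,m-1]$, writing $c_i := 1+\nu_{2i-1}-\nu_{2i+1} \geq 1$, dominance at the $i$-th coroot fails precisely when $\xi_{2i+1}-\xi_{2i-1} \geq c_i$. Since the differences $\xi_{2i-1}-\xi_{2i+1}$ all lie in $\{-1,0,1\}$ on weights of $V^{\frso_n}(\eps_1)$, a quick enumeration (over $\xi = 0$ and $\xi = \pm \eps_{2k-1}$) shows that this inequality is equivalent to $\xi + c_i(\eps_{2i-1}-\eps_{2i+1})$ being again a weight of $V^{\frso_n}(\eps_1)$; this gives (2). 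The same enumeration at the exceptional coroot yields (3) in type $D$ and (4) in type $B$. The main obstacle I anticipate is the bookkeeping at the special node of the Dynkin diagram, where the relevant coroot depends on the parity of $n$, and where in type $B$ the zero weight always provides a summand isomorphic to $V^{\frso_n}(\nu)$ whenever the boundary condition holds. The somewhat unusual vanishing formulation of (2)--(4) is in fact a compact way of encoding that $\nu+\xi$ cannot be pushed into the open dominant chamber by a single simple reflection.
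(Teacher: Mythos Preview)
Your approach works in type $D$ (even $n$), where $V^{\frso_n}(\eps_1)$ is minuscule, but it breaks down in type $B$ (odd $n$). The Pieri rule you state---summing over weights $\xi$ with $\nu+\xi$ dominant---is false for the type $B$ vector representation when $\nu_{2m-1}=0$: taking $n=3$ and $\nu=0$ gives $V^{\frso_3}(0)\otimes V^{\frso_3}(\eps_1)\simeq V^{\frso_3}(\eps_1)$, whereas your rule would add a spurious summand $V^{\frso_3}(0)$. (In the Brauer--Klimyk alternating sum the contribution from $\xi=0$ cancels against the reflected contribution from $\xi=-\eps_{2m-1}$.) Correspondingly, your translation of condition (4) into dominance at the short-root node fails: with $\nu_{2m-1}=0$ and $\xi=0$, the weight $\nu+\xi=\nu$ is dominant, yet $\xi+\eps_{2m-1}=\eps_{2m-1}$ \emph{is} a weight of $V^{\frso_n}(\eps_1)$, so (4) excludes $\xi=0$. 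This exclusion is correct---compare the next lemma in the paper, where for $n$ odd and $\rho_m=0$ the summand $\sigma=\rho$ does not appear---but it is not what dominance gives.

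The fix is to use Kashiwara's actual highest-weight criterion rather than dominance of $\nu+\xi$. In the crystal $B(\nu)\otimes B(\eps_1)$ the element $b_\nu\otimes b$ is highest weight iff $\vep_i(b)\le\langle \alpha_i^\vee,\nu\rangle$ for every simple coroot; since each weight of $V^{\frso_n}(\eps_1)$ occurs with multiplicity one, this is equivalent to saying that $\xi+(1+\langle \alpha_i^\vee,\nu\rangle)\alpha_i$ is not a weight of $V^{\frso_n}(\eps_1)$, which is precisely conditions (2)--(4). For $i<m$ and for the last node in type $D$ this criterion happens to coincide with dominance, which is why your argument looked plausible; only at the type $B$ short root does the distinction bite. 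The paper itself gives no proof but simply records the lemma as a special case of \cite[Lemma~7]{Ka90}; once you make this correction your outline aligns with that citation.
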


In terms of $\AI$-crystals, this lemma can be rewritten as follows.

\begin{lem}\label{SSTnAI(rho) otimes SSTn(1)}
Let $\rho \in \Par_m$.
\begin{enumerate}
\item Suppose that $n \in \Z_{\ev}$ and $\rho_m \neq 1$. Then, we have
$$
\SST_n^{\AI}(\rho) \otimes \SST_n(1) \simeq \bigsqcup_{\substack{\sigma \in \Par_m \\ \rho \triangleleft \sigma \OR \sigma \triangleleft \rho}} \SST_n^{\AI}(\sigma).
$$
\item\label{SSTnAI(rho) otimes SSTn(1) 2} Suppose that $n \in \Z_{\ev}$ and $\rho_m=1$. Then, we have
$$
\SST_n^{\AI}(\rho) \otimes \SST_n(1) \simeq \bigsqcup_{\substack{\sigma \in \Par_m \setminus \Par_{m-1} \\ \rho \triangleleft \sigma \OR \sigma \triangleleft \rho}} \SST_n^{\AI}(\sigma) \sqcup \SST_n^{\AI}(\rho')^2,
$$
where $\rho' := (\rho_1,\rho_2,\ldots,\rho_{m-1})$.
\item Suppose that $n \in \Z_{\odd}$ and $\rho_m = 0$. Then, we have
$$
\SST_n^{\AI}(\rho) \otimes \SST_n(1) \simeq \bigsqcup_{\substack{\sigma \in \Par_m \\ \rho \triangleleft \sigma \OR \sigma \triangleleft \rho}} \SST_n^{\AI}(\sigma).
$$
\item Suppose that $n \in \Z_{\odd}$ and $\rho_m \neq 0$. Then, we have
$$
\SST_n^{\AI}(\rho) \otimes \SST_n(1) \simeq \bigsqcup_{\substack{\sigma \in \Par_m \\ \rho = \sigma,\ \rho \triangleleft \sigma, \OR \sigma \triangleleft \rho}} \SST_n^{\AI}(\sigma).
$$
\end{enumerate}
\end{lem}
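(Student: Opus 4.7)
The plan is to transfer the $\frso_n$-module decomposition of the preceding lemma into the $\AI$-crystal setting, using the representation-theoretic interpretation of Subsection \ref{subsection results for iquantum group}. The operators $\Btil_i$ on the $\C$-span $\ol{\clL}(\rho) \otimes \ol{\clL}(1)$ come from the crystal limit of the $\Ui$-action on $V_q^{\frso_n}(\rho) \otimes V_q^{\frgl_n}(\eps_1)$, which is a genuine $\Ui$-module because $\Ui$ is a right coideal of $\U$. Its classical limit is the $\frso_n$-module $V^{\mathfrak{so}_n}(\rho) \otimes V^{\frso_n}(\eps_1)$, so the decomposition of the preceding lemma, after re-grouping the irreducible summands $V^{\frso_n}(\nu_\sigma)$ (resp. $V^{\frso_n}(\nu_\sigma^\pm)$) into the partition-indexed modules $V^{\mathfrak{so}_n}(\sigma)$ of the main theorem, matches the right-hand side case by case. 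In the odd-$n$ cases and in case (1), the re-grouping is direct; in case (2), where $\ell(\rho) = n/2$, one combines the decompositions of both $V^{\frso_n}(\nu_\rho^\pm) \otimes V^{\frso_n}(\eps_1)$, and the two independent contributions of $V^{\frso_n}(\nu_{\rho'})$ account for the $\SST_n^{\AI}(\rho')^2$ summand.

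Next I would upgrade this module decomposition to an actual $\AI$-crystal isomorphism. For each $\sigma$ appearing on the right-hand side, explicitly locate a singular element (in the sense of Definition \ref{definition of singular element}) of degree $\sigma$ inside $\SST_n^{\AI}(\rho) \otimes \SST_n(1)$; the natural candidates have the form $T_\rho \otimes \Sbox{j}$ or $K_1(T_\rho) \otimes \Sbox{j}$ for suitably chosen $j \in [1,n]$, and the singular condition is verified directly from the tensor product rule in Proposition \ref{AI-crystal tensor gl-crystal}. Given such a singular element $S$, adapt the argument of Lemma \ref{Btil singular = Ytil highest}: the inclusion $\C C^{\AI}(S) \subset \Ytil h(S)$ combined with $\dim \Ytil h(S) = \dim V^{\frso_n}(\nu_\sigma) = |\SST_n^{\AI}(\sigma)|$ (using Theorem \ref{main theorem}) and a surjective $\AI$-crystal morphism $C^{\AI}(S) \twoheadrightarrow \SST_n^{\AI}(\sigma)$ (constructed through the $P^{\AI}$-symbol of Proposition \ref{PAI-symbol is AI-crystal morphism} together with connectedness of $\SST_n^{\AI}(\sigma)$ from Theorem \ref{main theorem}(1)) forces $C^{\AI}(S) \simeq \SST_n^{\AI}(\sigma)$. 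A final cardinality check, matching $\ch_{\AI}(\SST_n^{\AI}(\rho) \otimes \SST_n(1))$ against the sum of characters on the right via Theorem \ref{main theorem}(2), confirms that no connected component has been missed.

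The main obstacle lies in case (2): one must exhibit \emph{two} distinct non-$\Btil$-equivalent singular elements of degree $\rho'$, reflecting the fact that $V^{\mathfrak{so}_n}(\rho) = V^{\frso_n}(\nu_\rho^+) \oplus V^{\frso_n}(\nu_\rho^-)$ contributes two independent copies of $V^{\frso_n}(\nu_{\rho'})$ after tensoring with $V^{\frso_n}(\eps_1)$. The plan is to build the two candidates from $h(T_\rho)_\pm$-type combinations tensored with a suitable single box, and to verify non-equivalence using the identity $K_1(T_\rho) = \Btil_1 \Btil_3 \cdots \Btil_{2m-1} T_\rho$ of Lemma \ref{K1(Trho)} together with a careful case analysis of $\Btil_i$ on the tensor product via Proposition \ref{AI-crystal tensor gl-crystal}. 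All other cases are lighter, as the right-hand sides consist of multiplicity-free sums and the singular-element count collapses to a straightforward existence statement.
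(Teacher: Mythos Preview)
Your proposal is correct and follows the same route as the paper: both obtain the $\AI$-crystal decomposition by translating the preceding $\frso_n$-module decomposition via the representation-theoretic correspondence established in Section~\ref{Section Representation theoretic interpretation}. The paper in fact gives no standalone proof---it presents the lemma as the previous one ``rewritten in terms of $\AI$-crystals,'' relying on equation~\eqref{Irreducible decomposition of V(lm) at q=infty}, Lemma~\ref{Btil singular = Ytil highest}, and Theorem~\ref{main theorem}---so your plan to locate singular elements explicitly and then count is simply a more detailed execution of that same translation.
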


Now, let us investigate the connected components of $\SST_n^{\AI}(\rho) \otimes \SST_n(1)$. Let $T \in \SST_n^{\AI}(\rho)$ and $l \in [1,n]$. Let $\sigma \in \Par_m$ denote the shape of $\std(T \leftarrow l)$. Then, by Proposition \ref{PAI-symbol is AI-crystal morphism} and Theorem \ref{main theorem} \eqref{main theorem 1}, the connected component containing $T \otimes \Sbox{l}$ is isomorphic to $\SST_n^{\AI}(\sigma)$. This observation, together with Lemma \ref{SSTnAI(rho) otimes SSTn(1)} implies the following: Unless $n \in \Z_{\ev}$ and $\sigma = \rho'$, we have
$$
C^{\AI}(T \otimes \Sbox{l}) = \{ T' \otimes \Sbox{l'} \mid \sh(\std(T' \leftarrow l')) = \sigma \}.
$$
Hence, let us consider the case when $n \in \Z_{\ev}$ and $\sigma = \rho'$ (this can happen only when $\rho_m = 1$). In this case, the set
$$
\{ T' \otimes \Sbox{l'} \mid \sh(\std(T' \leftarrow l')) = \rho' \}
$$
consists of exactly two connected components, and both of them are isomorphic to $\SST_n^{\AI}(\rho')$. The following lemma describes one of these connected components.

\begin{lem}\label{criterion}
Let $n \in \Z_{\ev}$ and $\rho \in \Par_m$ be such that $\rho_m = 1$. Set $\rho' := (\rho_1,\ldots,\rho_{m-1})$, and $\rho'' := (\rho_1,\ldots,\rho_m,1)$. Then, the set
$$
\{ T \otimes \Sbox{l} \in \SST_n^{\AI}(\rho) \otimes \SST_n(1) \mid \sh(\std(T \leftarrow l)) = \rho' \AND \sh(T \leftarrow l) = \rho'' \}
$$
is a connected component of $\SST_n^{\AI}(\rho) \otimes \SST_n(1)$ isomorphic to $\SST_n^{\AI}(\rho')$.
\end{lem}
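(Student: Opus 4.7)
The plan is to identify $S$ as exactly one of the two connected components of $\Phi^{-1}(\SST_n^{\AI}(\rho'))$ in the notation of Lemma~\ref{SSTnAI(rho) otimes SSTn(1)}(2), where $\Phi(T \otimes \Sbox{l}) := \std(T \leftarrow l)$. The first observation is that the shape $\sh(T \leftarrow l)$ is constant on each $\AI$-connected component of $\SST_n^{\AI}(\rho) \otimes \SST_n(1)$: the insertion map $T \otimes \Sbox{l} \mapsto T \leftarrow l$ realizes the underlying $\mathfrak{gl}$-crystal as $\bigsqcup_{\rho \triangleleft \mu} \SST_n(\mu)$, and by Proposition~\ref{AI-crystal tensor gl-crystal} combined with Corollary~\ref{AI-crystal structure on gl-crystal}, each $\Btil_i$ on the $\AI$-crystal side acts as some $\Etil_i$ or $\Ftil_i$ on the $\mathfrak{gl}$-side, so it preserves the shape of the $P$-symbol under Robinson--Schensted. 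Since $\sh(\std(T \leftarrow l))$ is also constant on components (Proposition~\ref{PAI-symbol is AI-crystal morphism}), $S$ is a union of $\AI$-connected components, and by Lemma~\ref{SSTnAI(rho) otimes SSTn(1)}(2) we must have $|S| \in \{0,\, |\SST_n^{\AI}(\rho')|,\, 2|\SST_n^{\AI}(\rho')|\}$.

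To establish $|S| \leq |\SST_n^{\AI}(\rho')|$, I will show that $\Phi|_S$ is injective. The key is the claim that for every $(T,l) \in S$, the minimal $r \geq 1$ with $K_1^r(T \leftarrow l)$ an $\AI$-tableau equals $1$, so $\std(T \leftarrow l) = K_1(T \leftarrow l)$. Setting $\ell_k := \ell(\sh(K_1^k(T \leftarrow l)))$ and $s_k := |K_1^k(T \leftarrow l)|$, one has $\ell_0 = m+1$, $\ell_r = m-1$, $s_0 = |\rho|+1$, $s_r = |\rho|-1$, together with the size recursion $s_k - s_{k-1} = n - 2\ell_{k-1}$ and the adjacency $\ell_{k-1} + \ell_k \geq n = 2m$ coming from Lemma~\ref{observation of C1C2}. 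Summing these relations forces every adjacency to be tight, which produces the oscillation $\ell_k = m+1$ for even $k$ and $\ell_k = m-1$ for odd $k$; in particular, if $r \geq 3$, then already $\ell_1 = m-1$ with $\ell_0 + \ell_1 = n$ tight, so Lemma~\ref{observation of C1C2} yields $K_1(T \leftarrow l) = K(C_1) C_2 \cdots$ (where $C_1, C_2, \ldots$ are the columns of $T \leftarrow l$), and its $\AI$-condition (length $m-1 \leq m$, and $K(K(C_1)) = C_1 \leq C_2$ by the semistandardness of $T \leftarrow l$) then holds automatically, contradicting the minimality of $r$. Hence $r = 1$, and since $K_1^2 = \id$ on semistandard tableaux (another instance of Lemma~\ref{observation of C1C2}), $T \leftarrow l = K_1(\std(T \leftarrow l))$ is determined by $\std(T \leftarrow l)$; reverse row-insertion from the unique outer cell $(m+1,1)$ of $\rho''$ then recovers $(T,l)$.

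To rule out $|S| = 0$, I will produce an explicit element. Pick any $U \in \SST_n^{\AI}(\rho')$ with columns $C'_1, C'_2, \ldots$, and set $V := K_1(U)$. The $\AI$-condition on $U$ gives $K(C'_1) \leq C'_2$ entry-wise, so Lemma~\ref{observation of C1C2} yields $V = K(C'_1) C'_2 \cdots$, whose first column $K(C'_1)$ has length $n - (m-1) = m+1$ and whose remaining column lengths match those of $\rho'$; hence $\sh(V) = \rho''$. Reverse row-inserting $V$ from the cell $(m+1,1)$ produces a pair $(T,l) \in \SST_n(\rho) \times [1,n]$ with $T \leftarrow l = V$. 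Verifying that this $T$ satisfies the $\AI$-condition---by analyzing the reverse-bumping path in $V$ and exploiting the $\AI$-condition on $U$ to control how the bumping can traverse beyond the first column---is the main technical obstacle of the argument. Once this is settled, $\sh(T \leftarrow l) = \rho''$ and $\std(T \leftarrow l) = K_1(V) = K_1^2(U) = U$ has shape $\rho'$, so $(T,l) \in S$ and $|S| \geq 1$. Combining with the preceding paragraphs gives $|S| = |\SST_n^{\AI}(\rho')|$, so $S$ is exactly one connected component and the injection of the previous paragraph becomes the claimed $\AI$-crystal isomorphism $S \simeq \SST_n^{\AI}(\rho')$.
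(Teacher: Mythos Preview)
Your opening moves match the paper's: both use that $\sh(T\leftarrow l)$ and $\sh(\std(T\leftarrow l))$ are constant on $\AI$-components, so $S$ is a union of components lying inside the two copies of $\SST_n^{\AI}(\rho')$. After that you diverge. The paper does not prove a global injectivity statement; it restricts attention to the two preimages of the single element $T_{\rho'}$ and shows there is a \emph{unique} $T''\in\SST_n(\rho'')$ with $\std(T'')=T_{\rho'}$ (namely the tableau obtained from $T_{\rho'}$ by applying $K$ to its first column), which forces at most one of the two component-shapes to equal $\rho''$. Your $r=1$ argument is a correct and pleasant way to get the same ``at most one'' conclusion without singling out $T_{\rho'}$; two small corrections: the oscillation forces a contradiction already for $r\geq 2$ (not only $r\geq 3$), and the assertion ``$K_1^2=\id$ on semistandard tableaux'' is false in general---what you are actually using is that once $K_1(T\leftarrow l)=K(C_1)C_2\cdots$ you get $K_1^2(T\leftarrow l)=P(C_1\otimes C_2\otimes\cdots)=T\leftarrow l$ via Lemma~\ref{observation of C1C2}.

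The genuine gap is exactly the one you flag: you have not shown that reverse-inserting $V=K_1(U)$ from $(m+1,1)$ yields $T\in\SST_n^{\AI}(\rho)$, and this does not hold for arbitrary $T''\in\SST_n(\rho'')$ (for instance with $n=4$, $\rho=(3,1)$, reverse-inserting $\begin{smallmatrix}1&1&1\\2&&\\3&&\end{smallmatrix}$ gives $\begin{smallmatrix}1&1&2\\3&&\end{smallmatrix}\notin\SST_4^{\AI}(\rho)$). Without this step you only have $|S|\in\{0,|\SST_n^{\AI}(\rho')|\}$. The paper's argument, read literally, shares the same lacuna: it proves uniqueness of $T''$ but does not check that the reverse insertion of the explicit $T''=K(T_{\rho'})$ lands in $\SST_n^{\AI}(\rho)$, which is what ``at least one $\sigma_i=\rho''$'' requires. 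The cleanest fix for you is to abandon the generic $U$ and take $U=T_{\rho'}$: then the first column of $V$ is a transversal of the pairs $\{2i-1,2i\}$ together with $n-1,n$, every entry in row $i$ and column $\geq 2$ is $2i$, and the reverse-bumping path can be traced explicitly enough to verify the $\AI$-condition on the resulting $T$ by hand.
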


\begin{proof}
By Lemma \ref{SSTnAI(rho) otimes SSTn(1)} \eqref{SSTnAI(rho) otimes SSTn(1) 2}, there are exactly two elements $T_1 \otimes \Sbox{l_1},T_2 \otimes \Sbox{l_2} \in \SST_n^{\AI}(\rho) \otimes \SST_n(1)$ such that $\std(T_i \leftarrow l_i) = T_{\rho'}$, $i = 1,2$. Set $\sigma_i := \sh(T_i \leftarrow l_i)$. Since the operators $\Btil_j$, $j \in [1,n-1]$ preserves semistandard tableaux, we have
$$
\sh(T' \leftarrow l') = \sigma_i
$$
for all $T' \otimes \Sbox{l'} \in C^{\AI}(T_i \otimes \Sbox{l_i})$.

Now, we show that exactly one of $\sigma_1,\sigma_2$ is equal to $\rho''$. To do so, it suffices to prove that there is a unique tableau $T'' \in \SST_n(\rho'')$ such that $\std(T'') = T_{\rho'}$. One can easily verify that $K(T_{\rho'})$ is such a tableau. To prove the uniqueness, let $T''$ be such a tableau. Let $C_j$, $j \in [1,\rho_1]$ denote the $j$-th column of $T''$. Since $|C_1| = m+1 > m$, we have $|K_1(T'')| \leq |T''|-2 = |\rho|-1 = |\rho'| = |\std(T'')|$. Hence, by Lemma \ref{observation of C1C2}, we have
$$
\std(T'') = K_1(T'') = P(K(C_1) \otimes C_2 \otimes \cdots \otimes C_{\rho_1}).
$$
Since $\std(T'') = T_{\rho'}$, the length of the first column of $\std(T'')$ is $m-1$. On the other hand, we have $|K(C_1)| = m-1$. Hence, it must hold that
$$
P(K(C_1) \otimes C_2 \otimes \cdots \otimes C_{\rho_1}) = K(C_1) C_2 \cdots C_{\rho_1}.
$$
Therefore, we obtain
$$
K(C_1) C_2 \cdots C_{\rho_1} = T_{\rho'}.
$$
This implies that $T'' = C_1 C_2 \cdots C_{\rho_1}$ is obtained from $T_{\rho'}$ by applying $K$ to the first column. In particular, $T''$ is uniquely determined. Thus, the proof completes.
\end{proof}

Combining Lemmas \ref{SSTnAI(rho) otimes SSTn(1)} and \ref{criterion}, we obtain the following.

\begin{prop}
Let $\rho \in \Par_m$.
\begin{enumerate}
\item Suppose that $n \in \Z_{\ev}$ and $\rho_m \neq 1$. Then, the map
$$
\SST_n^{\AI}(\rho) \otimes \SST_n(1) \rightarrow \bigsqcup_{\substack{\sigma \in \Par_m \\ \rho \triangleleft \sigma \OR \sigma \triangleleft \rho}} \SST_n^{\AI}(\sigma); \ T \otimes \Sbox{l} \mapsto \std(T \leftarrow l)
$$
is an isomorphism of $\AI$-crystals.
\item Suppose that $n \in \Z_{\ev}$ and $\rho_m=1$. Then, the map
\begin{align}
\begin{split}
&\SST_n^{\AI}(\rho) \otimes \SST_n(1) \rightarrow \bigsqcup_{\substack{\sigma \in \Par_m \setminus \Par_{m-1} \\ \rho \triangleleft \sigma \OR \sigma \triangleleft \rho}} \SST_n^{\AI}(\sigma) \sqcup (\SST_n^{\AI}(\rho') \times \{ +,- \}) \\
&T \otimes \Sbox{l} \mapsto \begin{cases}
\std(T \leftarrow l) & \IF \ell(\sh(\std(T \leftarrow l))) = m, \\
(\std(T \leftarrow l),+) & \IF \ell(\sh(\std(T \leftarrow l))) < m \AND \ell(\sh(T \leftarrow l)) > m, \\
(\std(T \leftarrow l),-) & \IF \ell(\sh(\std(T \leftarrow l))) < m \AND \ell(\sh(T \leftarrow l)) = m
\end{cases}
\end{split} \nonumber
\end{align}
is an isomorphism of $\AI$-crystals, where $\rho' := (\rho_1,\rho_2,\ldots,\rho_{m-1})$.
\item Suppose that $n \in \Z_{\odd}$ and $\rho_m = 0$. Then, the map
$$
\SST_n^{\AI}(\rho) \otimes \SST_n(1) \rightarrow \bigsqcup_{\substack{\sigma \in \Par_m \\ \rho \triangleleft \sigma \OR \sigma \triangleleft \rho}} \SST_n^{\AI}(\sigma); \ T \otimes \Sbox{l} \mapsto \std(T \leftarrow l)
$$
is an isomorphism of $\AI$-crystals.
\item Suppose that $n \in \Z_{\odd}$ and $\rho_m \neq 0$. Then, the map
$$
\SST_n^{\AI}(\rho) \otimes \SST_n(1) \rightarrow \bigsqcup_{\substack{\sigma \in \Par_m \\ \rho = \sigma, \ \rho \triangleleft \sigma, \OR \sigma \triangleleft \rho}} \SST_n^{\AI}(\sigma); \ T \otimes \Sbox{l} \mapsto \std(T \leftarrow l)
$$
is an isomorphism of $\AI$-crystals.
\end{enumerate}
\end{prop}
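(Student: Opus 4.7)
The plan is to factor the map as the composition of two already-established $\AI$-crystal morphisms and then argue bijectivity via the explicit decomposition of the source. First, row insertion underlies a $\frgl$-crystal isomorphism $\SST_n(\rho) \otimes \SST_n(1) \simeq \bigsqcup_{\mu \in \Par_n,\ \rho \triangleleft \mu} \SST_n(\mu)$. Restricting to the $\AI$-subcrystal $\SST_n^{\AI}(\rho) \otimes \SST_n(1)$, whose $\AI$-structure is given by Proposition \ref{AI-crystal tensor gl-crystal}, and composing with the $P^{\AI}$-symbol map $\std$, which is an $\AI$-crystal morphism by Proposition \ref{PAI-symbol is AI-crystal morphism}, yields an $\AI$-crystal morphism
$$\SST_n^{\AI}(\rho) \otimes \SST_n(1) \longrightarrow \bigsqcup_{\sigma \in \Par_m} \SST_n^{\AI}(\sigma), \qquad T \otimes \Sbox{l} \mapsto \std(T \leftarrow l).$$
This takes care of the morphism property of the map (and of the modified map in case (2), where the extra data $\pm$ is carried along trivially).

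Next, I would combine Lemma \ref{SSTnAI(rho) otimes SSTn(1)}, which supplies the decomposition of the source into connected components, with Theorem \ref{main theorem} (1), which says each $\SST_n^{\AI}(\sigma)$ is connected. Since any $\AI$-crystal morphism sends a connected component into a connected component, the above morphism must send each connected component of the source into a single $\SST_n^{\AI}(\sigma)$ on the target side. In cases (1), (3), and (4), the partitions $\sigma$ appearing in the source decomposition are pairwise distinct, so the target is uniquely determined by $\sh(\std(T \leftarrow l))$, and matching cardinalities term by term using Lemma \ref{SSTnAI(rho) otimes SSTn(1)} forces the restriction to each component to be a bijection. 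Combined with the morphism property, this gives an $\AI$-crystal isomorphism.

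The main obstacle is case (2), where $n$ is even and $\rho_m = 1$: the source contains \emph{two} connected components isomorphic to $\SST_n^{\AI}(\rho')$ with $\rho' = (\rho_1, \ldots, \rho_{m-1})$, and these cannot be distinguished by $\sh(\std(T \leftarrow l))$ alone. Here Lemma \ref{criterion} does the essential work: it identifies one of these two components as the set of $T \otimes \Sbox{l}$ with $\sh(\std(T \leftarrow l)) = \rho'$ and $\sh(T \leftarrow l) = \rho'' := (\rho_1, \ldots, \rho_{m-1}, 1, 1)$. Since $\rho \triangleleft \sh(T \leftarrow l)$ and $|\sh(T \leftarrow l)| = |\rho|+1$ restrict the possible shapes, and a shape analysis (as in the proof of Lemma \ref{criterion}) shows that the only way for $\std$ to land on the length-$(m-1)$ partition $\rho'$ is either through $\sh(T \leftarrow l) = \rho''$ (length $m+1$) or through a length-$m$ shape, the two components are separated precisely by $\ell(\sh(T \leftarrow l)) > m$ versus $\ell(\sh(T \leftarrow l)) = m$. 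This is exactly the $\pm$-labeling in the statement. The remaining components in case (2), corresponding to $\sigma \in \Par_m \setminus \Par_{m-1}$ with $\rho \triangleleft \sigma$ or $\sigma \triangleleft \rho$, are handled as in the preceding paragraph, and bijectivity of the total map follows once more from cardinality matching via Lemma \ref{SSTnAI(rho) otimes SSTn(1)} (2).
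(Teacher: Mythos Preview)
Your proposal is correct and follows essentially the same route as the paper: the paper's proof is the one-line ``Combining Lemmas \ref{SSTnAI(rho) otimes SSTn(1)} and \ref{criterion}'', preceded by the discussion that $T \otimes \Sbox{l} \mapsto \std(T \leftarrow l)$ is an $\AI$-crystal morphism (via Proposition \ref{PAI-symbol is AI-crystal morphism}) sending each connected component onto the connected $\SST_n^{\AI}(\sigma)$ with $\sigma = \sh(\std(T \leftarrow l))$ (via Theorem \ref{main theorem} \eqref{main theorem 1}), which is exactly your factorization and component-matching argument. Your treatment of case~(2) via Lemma \ref{criterion} and the dichotomy $\ell(\sh(T \leftarrow l)) = m$ versus $> m$ is likewise the paper's argument.
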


\subsection{Robinson-Schensted type correspondence}
Given a word $w = (w_1,\ldots,w_d) \in \clW$, define its $P^{\AI}$-symbol $P^{\AI}(w)$ by
$$
P^{\AI}(w) := \std(P(w)).
$$
For each $k \in [0,d]$, set
$$
P^{\AI,k} := P^{\AI}(w_1,\ldots,w_k)
$$
and
$$
\rho^k := \sh(P^{\AI,k}).
$$
Also, define $Q^{\AI, k}$ to be the pair $(Q^{\AI, k}_1,Q^{\AI, k}_2)$ of a standard tableau $Q^{\AI,k}_1 \in \STab_k(\rho^k)$ and a set $Q^{\AI,k}_2$ of subsets of $([1,k] \setminus \{ Q^{\AI,k}_1(i,j) \mid (i,j) \in D(\rho^k) \}) \sqcup \{+,-\}$ inductively as follows (cf. \cite[Section 8]{Sun86}). First, set $Q^{\AI,0} := (\emptyset,\emptyset)$. Next, note that we have either $\rho^{k} = \rho^{k-1}$, $\rho^{k-1} \triangleleft \rho^k$, or $\rho^k \triangleleft \rho^{k-1}$. Suppose that $\rho^k = \rho^{k-1}$. Then, we set
$$
Q^{\AI, k}_1 := Q^{\AI, k-1}_1, \qu Q^{\AI, k}_2 := Q^{\AI, k-1}_2 \sqcup \{ \{k\} \}.
$$
Next, suppose that $\rho^{k-1} \triangleleft \rho^k$. Then, we set
$$
Q^{\AI, k}_1(i,j) := \begin{cases}
Q^{\AI, k-1}_1(i,j) \qu & \IF (i,j) \in D(\rho^{k-1}), \\
k \qu & \IF (i,j) \in D(\rho^k) \setminus D(\rho^{k-1}),
\end{cases} \qu Q^{\AI, k}_2 := Q^{\AI, k-1}_2.
$$
Finally, suppose that $\rho^k \triangleleft \rho^{k-1}$. Then, we define $Q^{\AI, k}_1$ to be the unique tableau of shape $\rho^k$ such that $(Q^{\AI, k}_1 \leftarrow l) = Q^{\AI, k-1}_1$ for some $l \in [1,k-1]$ (such $Q^{\AI, k}_1$ and $l$ can be computed by the inverse of the insertion algorithm), and
$$
Q^{\AI, k}_2 := \begin{cases}
Q^{\AI, k-1}_2 \sqcup \{ \{ l,k,+ \} \} \qu & \IF \ell(\rho^{k-1}) = \frac{n}{2} > \ell(\rho^k) \AND \ell(\sh(P^{\AI,k-1} \leftarrow w_k) > m, \\
Q^{\AI, k-1}_2 \sqcup \{ \{ l,k,- \} \} \qu & \IF \ell(\rho^{k-1}) = \frac{n}{2} > \ell(\rho^k) \AND \ell(\sh(P^{\AI,k-1} \leftarrow w_k) = m, \\
Q^{\AI, k-1}_2 \sqcup \{ \{ l,k \} \} \qu & \OW.
\end{cases}
$$
Now, define the $Q^{\AI}$-symbol $Q^{\AI}(w) = (Q^{\AI}(w)_1,Q^{\AI}(w)_2)$ and the $\AI$-shape $\sh^{\AI}(w)$ of $w$ to be $(Q^{\AI, d}_1,Q^{\AI, d}_2)$ and $\rho^d$, respectively.

\begin{ex}\normalfont\label{ex: RS for AI}
\ \begin{enumerate}
\item\label{ex: RS for AI 1} Let $n = 4$ and $w = (1,1,4,2,1,1,1)$. Then, $(P^{\AI}(w),Q^{\AI}(w))$ is calculated as follows: The right-most tableaux of the first and second row are $P^{\AI}(w)$ and $Q^{\AI}(w)_1$, respectively, and the sets in the third row are the elements of $Q^{\AI}(w)_2$.
$$
\xymatrix@C=8pt@R=10pt{
\emptyset& \text{$\ytableausetup{smalltableaux}
\begin{ytableau}
1
\end{ytableau}$}& \emptyset &\text{$\ytableausetup{smalltableaux}
\begin{ytableau}
4
\end{ytableau}$}&\text{$\ytableausetup{smalltableaux}
\begin{ytableau}
2 \\
4
\end{ytableau}$}&\text{$\ytableausetup{smalltableaux}
\begin{ytableau}
3
\end{ytableau}$}&\text{$\ytableausetup{smalltableaux}
\begin{ytableau}
1 \\
3
\end{ytableau}$}&\text{$\ytableausetup{smalltableaux}
\begin{ytableau}
3
\end{ytableau}$} \\
\emptyset&\text{$\ytableausetup{smalltableaux}
\begin{ytableau}
1
\end{ytableau}$}& \emptyset &\text{$\ytableausetup{smalltableaux}
\begin{ytableau}
3
\end{ytableau}$}&\text{$\ytableausetup{smalltableaux}
\begin{ytableau}
3 \\
4
\end{ytableau}$}&\text{$\ytableausetup{smalltableaux}
\begin{ytableau}
4
\end{ytableau}$}&\text{$\ytableausetup{smalltableaux}
\begin{ytableau}
4 \\
6
\end{ytableau}$}&\text{$\ytableausetup{smalltableaux}
\begin{ytableau}
6
\end{ytableau}$} \\
&&\{1,2\}& & & \{3,5,+\}& &\{4,7,-\}
}
$$
\item Let $n = 5$ and $w = (1,1,4,2,1)$. Then, $(P^{\AI}(w),Q^{\AI}(w))$ is calculated as follows:
$$
\xymatrix@C=8pt@R=10pt{
\emptyset& \text{$\ytableausetup{smalltableaux}
\begin{ytableau}
1
\end{ytableau}$}& \emptyset &\text{$\ytableausetup{smalltableaux}
\begin{ytableau}
4
\end{ytableau}$}&\text{$\ytableausetup{smalltableaux}
\begin{ytableau}
2 \\
4
\end{ytableau}$}&\text{$\ytableausetup{smalltableaux}
\begin{ytableau}
3 \\
5
\end{ytableau}$} \\
\emptyset&\text{$\ytableausetup{smalltableaux}
\begin{ytableau}
1
\end{ytableau}$}& \emptyset &\text{$\ytableausetup{smalltableaux}
\begin{ytableau}
3
\end{ytableau}$}&\text{$\ytableausetup{smalltableaux}
\begin{ytableau}
3 \\
4
\end{ytableau}$}&\text{$\ytableausetup{smalltableaux}
\begin{ytableau}
3 \\
4
\end{ytableau}$} \\
&&\{1,2\}&&&\{5\}
}
$$
\end{enumerate}
\end{ex}

The $Q^{\AI}$-symbols for various words are characterized by the notion of $\mathfrak{so}_n$-oscillating tableaux, which we now define.

\begin{defi}\normalfont
Let $\rho \in \Par_m$ and $d \geq 0$. An $\mathfrak{so}_n$-oscillating tableau of shape $\rho$ and length $d$ is a sequence $\bfrho = ((\rho^0,s^0),(\rho^1,s^1),\ldots,(\rho^d,s^d))$ of pairs $(\rho^k,s^k) \in \Par_m \times \{ 0,+,- \}$ satisfying the following:
\begin{enumerate}
\item $\rho^0 = \emptyset$, $\rho^d = \rho$, and either $\rho^k = \rho^{k-1}$, $\rho^{k-1} \triangleleft \rho^k$, or $\rho^k \triangleleft \rho^{k-1}$.
\item If $\rho^k = \rho^{k-1}$ for some $k$, then $n \in \Z_{\odd}$ and $\ell(\rho^k) = m$.
\item If $s^k \in \{+,-\}$ for some $k$, then $n \in \Z_{\ev}$
\item $s^k \in \{+,-\}$ if and only if $\ell(\rho^{k-1}) = m$, and $\ell(\rho^k) = m-1$.
\end{enumerate}
Let $\OT_n(\rho)$ denote the set of $\mathfrak{so}_n$-oscillating tableaux of shape $\rho$, and set
$$
\OT_n := \bigsqcup_{\rho \in \Par_m} \OT_n(\rho).
$$
\end{defi}

Given an $\mathfrak{so}_n$-oscillating tableau $\bfrho = ((\rho^0,s^0),(\rho^1,s^1),\ldots,(\rho^d,s^d))$ of shape $\rho$ and length $d$, we associate a pair $Q(\bfrho) = (Q_1,Q_2)$ of a standard tableau $Q_1 \in \STab_d(\rho)$ and a set $Q_2$ of subsets of $([1,d] \setminus \{ Q_1(i,j) \mid (i,j) \in D(\rho) \}) \sqcup \{ +,- \}$ as follows. When $d = 0$, define $Q(\bfrho) = (\emptyset,\emptyset)$. Now, assume that $d > 0$ and set $\bfrho' := ((\rho^0,s^0),(\rho^1,s^1),\ldots,(\rho^{d-1},s^{d-1}))$ and $Q(\bfrho') = (Q'_1,Q'_2)$. Then, $Q(\bfrho)$ is defined as follows:
\begin{enumerate}
\item If $\rho^{d} = \rho^{d-1}$, then $Q_1 = Q'_1$ and $Q_2 = Q'_2 \sqcup \{ \{d\} \}$.
\item If $\rho^{d-1} \triangleleft \rho^d$, then
$$
Q_1(i,j) = \begin{cases}
Q'_1(i,j) \qu & \IF (i,j) \in D(\rho^{d-1}), \\
d \qu & \IF (i,j) \in D(\rho^d) \setminus D(\rho^{d-1}),
\end{cases}
$$
and $Q_2 = Q'_2$.
\item If $\rho^d \triangleleft \rho^{d-1}$ and $s^d = 0$, then $Q'_1 = (Q_1 \leftarrow l)$ for some uniquely determined $l \in [1,d-1]$, and $Q_2 = Q'_2 \sqcup \{ \{ l,d \} \}$.
\item If $\rho^d \triangleleft \rho^{d-1}$ and $s^d \in \{+,-\}$, then $Q'_1 = (Q_1 \leftarrow l)$ for some uniquely determined $l \in [1,d-1]$, and $Q_2 = Q'_2 \sqcup \{ \{ l,d,s^d \} \}$.
\end{enumerate}
As before, the tableau $Q'_1$ and letter $l$ in items $(3)$ and $(4)$ above can be computed by the inverse of the insertion algorithm.

\begin{ex}\normalfont
  Set $\rho := (1)$, and
  $$
  \bfrho := ((\emptyset,0), ((1),0), (\emptyset,0), ((1),0), ((1,1),0), ((1),+), ((1,1),0), ((1),-))
  $$
  Then, we have $\bfrho \in \OT_4(\rho)$, and $Q(\bfrho)$ coincides with $Q^{\AI}(w)$ in Example \ref{ex: RS for AI} \eqref{ex: RS for AI 1}.
\end{ex}

\begin{lem}
The assignment $\bfrho \mapsto Q(\bfrho)$ is injective.
\end{lem}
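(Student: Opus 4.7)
The plan is to prove injectivity by exhibiting an explicit inverse algorithm: given $Q(\bfrho) = (Q_1,Q_2)$, I would reconstruct $\bfrho$ step by step by running the forward construction in reverse. I would induct on the length $d$ of $\bfrho$; the base case $d = 0$ is immediate since only the empty sequence produces $(\emptyset,\emptyset)$.

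For the inductive step, the key observation is that in the pair $(Q_1,Q_2)$ the largest index $d$ appears in exactly one of four mutually exclusive patterns, each corresponding to one of the four clauses in the definition of $Q(\bfrho)$: (a) $d$ is an entry of the standard tableau $Q_1$, corresponding to $\rho^{d-1} \triangleleft \rho^d$; (b) $\{d\} \in Q_2$, corresponding to $\rho^{d-1} = \rho^d$; (c) $\{l,d\} \in Q_2$ for some $l < d$, corresponding to $\rho^d \triangleleft \rho^{d-1}$ with $s^d = 0$; or (d) $\{l,d,\sigma\} \in Q_2$ for some $l < d$ and $\sigma \in \{+,-\}$, corresponding to $\rho^d \triangleleft \rho^{d-1}$ with $s^d = \sigma$. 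Since these cases are disjoint, both $s^d$ and the type of the last transition are read off unambiguously from $(Q_1,Q_2)$.

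Setting $\bfrho' := ((\rho^0,s^0),\ldots,(\rho^{d-1},s^{d-1}))$, I next show how to recover $Q(\bfrho') = (Q_1',Q_2')$. In case (a), delete the box of $Q_1$ containing $d$ to obtain $Q_1'$, and set $Q_2' = Q_2$. In case (b), take $Q_1' = Q_1$ and $Q_2' = Q_2 \setminus \{\{d\}\}$. In cases (c) and (d), read off the unique letter $l$ from the subset containing $d$, apply ordinary Schensted row-insertion to form $Q_1' := (Q_1 \leftarrow l)$, and delete the subset containing $d$ from $Q_2$ to form $Q_2'$. The previous shape $\rho^{d-1}$ is then the shape of the recovered $Q_1'$. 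By the inductive hypothesis, $\bfrho'$ is uniquely determined by $Q(\bfrho')$, whence $\bfrho$ is uniquely determined by $Q(\bfrho)$.

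The only point that genuinely needs checking is that the four patterns above are disjoint and exhaustive in every execution of the forward algorithm, which is the main (but easy) obstacle: it follows by inspection of the four clauses in the definition of $Q(\bfrho)$, together with the observation that at step $k$ the construction places $k$ into $Q_1$ or into a single subset of $Q_2$ in exactly one of the four prescribed shapes. The well-definedness of $(Q_1 \leftarrow l)$ in cases (c) and (d) is automatic from Schensted's algorithm, so no further obstacle arises.
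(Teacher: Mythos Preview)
Your proof is correct and follows essentially the same approach as the paper's: both argue by induction on the length $d$, identify from $(Q_1,Q_2)$ which of the four mutually exclusive cases governs the last step, and in each case recover $Q(\bfrho')$ and $s^d$ explicitly (using Schensted insertion $(Q_1 \leftarrow l)$ to undo the reverse bump in the decreasing-shape cases). The only minor addition in the paper is the explicit remark that $d$ itself is recovered as the maximal integer appearing in $(Q_1,Q_2)$, which you leave implicit in the phrase ``the largest index $d$''.
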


\begin{proof}
Let $\bfrho,\bfsigma \in \OT_n$ be such that $Q(\bfrho) = Q(\bfsigma)$. Set
$$
(Q_1,Q_2) := Q(\bfrho), \qu \rho := \sh(Q_1).
$$
First of all, we see that the shapes of $\bfrho$ and $\bfsigma$ coincide; both are $\rho$.

From the definition of $Q(\bfrho)$, we see that the length of $\bfrho$ is the maximal integer appearing in $Q(\bfrho)$. In other words, the length of $\bfrho$ is determined by $Q(\bfrho)$. Therefore, $\bfrho$ and $\bfsigma$ have the same length, say $d$.

We prove that $\bfrho = \bfsigma$ by induction on $d$; the case when $d = 0$ is clear. Assume that $d > 0$, and set $\bfrho' = ((\rho^0,s^0),(\rho^1,s^1),\ldots,(\rho^{d-1},s^{d-1}))$. Then, we have
$$
(Q(\bfrho'),s^d) = \begin{cases}
((Q_1,Q_2 \setminus \{\{d\}\}),0) \qu & \IF \{d\} \in Q_2, \\
((Q_1|_{[1,d-1]}, Q_2),0) \qu & \IF d \in Q_1, \\
(((Q_1 \leftarrow l), Q_2 \setminus \{ \{l,d \} \}),0) \qu & \IF \{ l,d \} \in Q_2 \Forsome l < d, \\
(((Q_1 \leftarrow l), Q_2 \setminus \{ \{l,d,\pm \} \}),\pm) \qu & \IF \{ l,d,\pm \} \in Q_2 \Forsome l < d.
\end{cases}
$$
This implies that $Q(\bfrho')$ and $s^d$ are determined by $Q(\bfrho)$. By our induction hypothesis, $\bfrho'$ is determined by $Q(\bfrho')$. Hence, $\bfrho$ is determined by $Q(\bfrho)$, which implies $\bfrho = \bfsigma$, as desired. This completes the proof.
\end{proof}

In this way, we often identify $\bfrho$ with $Q(\bfrho)$. Now, the following is immediate from the results obtained so far.

\begin{theo}
Let $n \geq 3$. Then, the assignment
$$
\RS^{\AI} : \clW \rightarrow \bigsqcup_{\rho \in \Par_m} \SST_n^{\AI}(\rho) \times \OT_n(\rho);\ w \mapsto (P^{\AI}(w),Q^{\AI}(w))
$$
is an isomorphism of $\AI$-crystals.
\end{theo}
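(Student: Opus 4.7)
The plan is to prove the theorem by induction on the length $d = |w|$. The base case $d=0$ is trivial, since both sides reduce to the singleton $\{\emptyset\}$. For the inductive step, the identification $\SST_n(1)^{\otimes d} = \SST_n(1)^{\otimes d-1} \otimes \SST_n(1)$ as $\AI$-crystals, together with the inductive hypothesis, yields an $\AI$-crystal isomorphism
$$
\clW_d = \clW_{d-1} \otimes \SST_n(1) \xrightarrow{\sim} \bigsqcup_{\rho \in \Par_m} \bigsqcup_{Q \in \OT_n^{d-1}(\rho)} \SST_n^{\AI}(\rho) \otimes \SST_n(1)
$$
by Corollary \ref{AI-crystal morphism otimes 1}, where $\OT_n^{d-1}(\rho)$ denotes $\mathfrak{so}_n$-oscillating tableaux of shape $\rho$ and length $d-1$. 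Applying the proposition immediately preceding this theorem to decompose each summand $\SST_n^{\AI}(\rho) \otimes \SST_n(1)$ and regrouping by the outgoing shape $\sigma$ then yields an $\AI$-crystal isomorphism $\clW_d \cong \bigsqcup_{\sigma \in \Par_m} \SST_n^{\AI}(\sigma) \times \OT_n^d(\sigma)$. The identification of the new index set with $\OT_n^d(\sigma)$ proceeds by a case check matching the four cases of the preceding proposition with the possible transitions $(\sigma, s)$ in the definition of an $\mathfrak{so}_n$-oscillating tableau: a sign $s \in \{+,-\}$ arises precisely when $n$ is even, $\ell(\rho) = m$, and $\ell(\sigma) = m-1$, corresponding to the two disjoint copies of $\SST_n^{\AI}(\rho')$ appearing in Lemma \ref{SSTnAI(rho) otimes SSTn(1)}(2).

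It then remains to verify that the inductively constructed isomorphism coincides with the explicit map $w \mapsto (P^{\AI}(w), Q^{\AI}(w))$ of the statement. Unrolling the induction, the first coordinate evolves by $P^{\AI, k} = \std(P^{\AI, k-1} \leftarrow w_k)$, and one checks by a small separate lemma that this agrees with $\std(P(w_1,\ldots,w_k))$, the intrinsic $P^{\AI}$-symbol. The second coordinate records the chain of shape transitions $\rho^{k-1} \to \rho^k$ together with the $\pm$-sign in the exceptional even case, which exactly matches the recursive definition of $Q^{\AI, k}$: up-steps ($\rho^{k-1} \triangleleft \rho^k$) place $k$ into the new cell of $Q_1$, stay-steps ($\rho^{k-1} = \rho^k$) add $\{k\}$ to $Q_2$, and down-steps ($\rho^k \triangleleft \rho^{k-1}$) add $\{l,k\}$ or $\{l,k,\pm\}$ to $Q_2$ via inverse insertion and the sign read off from the connected component reached.

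The main obstacle I anticipate is the careful bookkeeping of the $\pm$-sign in the even case. One must verify that $\ell(\sh(P^{\AI, k-1} \leftarrow w_k)) \in \{m, m+1\}$ is precisely the invariant distinguishing the two components $\SST_n^{\AI}(\rho')$ arising in the decomposition of $\SST_n^{\AI}(\rho) \otimes \SST_n(1)$ when $\rho_m = 1$, so that the clause in the recursive definition of $Q^{\AI, k}_2$ aligns with the $\pm$-label produced by the preceding proposition. Once this alignment and the compatibility of iterated insertion with $\std$ are confirmed, bijectivity and the $\AI$-crystal morphism property follow automatically from the inductive step.
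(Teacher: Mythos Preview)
Your proposal is correct and matches the paper's intended argument. The paper gives no explicit proof, stating only that the theorem ``is immediate from the results obtained so far''; your induction on word length, decomposing $\SST_n^{\AI}(\rho)\otimes\SST_n(1)$ via the preceding proposition and reindexing by oscillating tableaux, is precisely what that sentence is meant to unpack. You are in fact more careful than the paper in flagging the compatibility $\std(\std(T)\leftarrow l)=\std(T\leftarrow l)$ (needed so that the recursively built $P^{\AI,k}$ agrees with the intrinsic $\std(P(w_1,\dots,w_k))$, and so that $(\rho^k)_k$ really is an oscillating sequence) and the $\pm$-sign alignment via Lemma~\ref{criterion}; both points are glossed over in the paper but are genuine verifications.
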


\subsection{Branching rule}
We have obtained two isomorphisms of $\AI$-crystals:
$$
\RS : \clW \rightarrow \bigsqcup_{\lm \in \Par_n} \SST_n(\lm) \times \STab_{|\lm|}(\lm),
$$
and
$$
\RS^{\AI} : \clW \rightarrow \bigsqcup_{\rho \in \Par_m} \SST_n^{\AI}(\rho) \times \OT_n(\rho).
$$
Let $\lm \in \Par_n$, $\rho \in \Par_m$, $(P,Q) \in \SST_n(\lm) \times \STab_{|\lm|}(\lm)$, $(P',Q') \in \SST_n^{\AI}(\rho) \times \OT_n(\rho)$ be such that
$$
(P',Q') = \RS^{\AI} \circ \RS\inv(P,Q).
$$
Since $\SST_n^{\AI}(\rho)$ is connected, for each $P'' \in \SST_n^{\AI}(\rho)$, there exist $i_1,\ldots,i_r \in [1,n-1]$ such that
$$
P'' = \Btil_{i_1} \cdots \Btil_{i_r} P'.
$$
Therefore, we obtain
$$
(P'',Q') = \Btil_{i_1} \cdots \Btil_{i_r}(P',Q') = \RS^{\AI} \circ \RS\inv(\Btil_{i_1} \cdots \Btil_{i_r} P,Q).
$$
This implies that $Q$ is independent of $P'$ and uniquely determined by $Q'$. Let us say $Q$ is the $Q$-symbol of $Q'$, and write it as $Q(Q')$. Hence, there exists a map
$$
Q : \bigsqcup_{\rho \in \Par_m} \OT_n(\rho) \rightarrow \bigsqcup_{\lm \in \Par_n} \STab_{|\lm|}(\lm);\ Q' \mapsto Q(Q').
$$

\begin{theo}
Let $\lm \in \Par_n$. Then, the map
\begin{align}
\begin{split}
\SST_n(\lm) &\rightarrow \bigsqcup_{\rho \in \Par_m} \SST_n^{\AI}(\rho) \times \{ Q' \in \OT_n(\rho) \mid Q(Q') = T(\lm) \}; \\
T &\mapsto (P^{\AI}(T), Q^{\AI}(\CR(T)))
\end{split} \nonumber
\end{align}
is an isomorphism of $\AI$-crystals, where $T(\lm) \in \STab_{|\lm|}(\lm)$ is given by
$$
T(\lm)(i,j) := \sum_{k=1}^{j-1} d_k + i,
$$
and $d_k$ denotes the length of the $k$-th column of $D(\lm)$.
\end{theo}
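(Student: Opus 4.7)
The plan is to realize the map in the statement as the composition of two $\AI$-crystal isomorphisms already in hand, restricted to an appropriate $Q$-fibre. Write $\iota : \SST_n(\lm) \to \clW$ for the column-reading map $T \mapsto \CR(T)$. Because
$$
P^{\AI}(\CR(T)) = \std(P(\CR(T))) = \std(T) = P^{\AI}(T),
$$
the assignment in the theorem equals $\RS^{\AI} \circ \iota$, so the task reduces to analysing $\iota$ and then composing with the known $\AI$-crystal isomorphism $\RS^{\AI}$ from the previous theorem.

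First I would verify the classical identity $\RS(\CR(T)) = (T, T(\lm))$ for every $T \in \SST_n(\lm)$. By a straightforward induction on the column index, inserting the letters of the $j$-th column of $T$ in bottom-to-top order into the partial tableau $C_1 \cdots C_{j-1}$ previously assembled simply attaches a fresh $j$-th column with those entries; the $Q$-symbol therefore records the insertion times $d_1 + \cdots + d_{j-1} + 1,\ldots, d_1 + \cdots + d_j$ down the $j$-th column, which is exactly $T(\lm)$. Consequently $\iota$ is the composite of $T \mapsto (T, T(\lm))$ with $\RS^{-1}$, and hence a bijection from $\SST_n(\lm)$ onto $\{w \in \clW \mid Q(w) = T(\lm)\}$. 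Since the $\frgl$-crystal structure on $\SST_n(\lm) \times \STab_{|\lm|}(\lm)$ is trivial on the second factor, $T \mapsto (T, T(\lm))$ is an isomorphism of $\frgl$-crystals onto the sub-crystal $\SST_n(\lm) \times \{T(\lm)\}$; composing with $\RS^{-1}$, $\iota$ is an isomorphism of $\frgl$-crystals, hence of $\AI$-crystals under the convention following Corollary \ref{AI-crystal structure on gl-crystal}, onto the sub-$\AI$-crystal $\{w \in \clW \mid Q(w) = T(\lm)\}$ of $\clW$.

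Next I would transport along $\RS^{\AI}$. As this is an $\AI$-crystal isomorphism, the composition $\RS^{\AI} \circ \iota$ is an $\AI$-crystal isomorphism from $\SST_n(\lm)$ onto the image of $\{w \in \clW \mid Q(w) = T(\lm)\}$ inside $\bigsqcup_{\rho \in \Par_m} \SST_n^{\AI}(\rho) \times \OT_n(\rho)$. To identify this image with the target in the statement, I would invoke the defining property of the map $Q : \OT_n \to \bigsqcup_{\lm} \STab_{|\lm|}(\lm)$ introduced just before the theorem, namely that $Q(Q^{\AI}(w))$ equals the ordinary $Q$-symbol $Q(w)$ for every $w \in \clW$. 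It follows at once that the image consists exactly of those pairs $(P', Q')$ with $Q(Q') = T(\lm)$, as required, and the composite sends $T$ to $(P^{\AI}(T), Q^{\AI}(\CR(T)))$.

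I do not foresee any serious obstacle: the only step requiring genuine care is the classical identity $Q(\CR(T)) = T(\lm)$, which involves matching the explicit definition of $T(\lm)$ via the column lengths $d_k$ against the RS insertion of $\CR(T)$. Everything else is a purely formal consequence of composing and restricting the two isomorphisms $\RS$ and $\RS^{\AI}$ already established in the paper.
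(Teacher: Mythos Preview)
Your proposal is correct and follows essentially the same route as the paper: both factor the map as $\RS^{\AI}\circ\CR$, use the classical fact $Q(\CR(T))=T(\lm)$ to identify the image of $\CR$ with the $Q$-fibre over $T(\lm)$, and then invoke the defining relation $Q(Q^{\AI}(w))=Q(w)$ to pin down the image under $\RS^{\AI}$. If anything, your write-up is slightly more explicit than the paper's in justifying that $\CR$ is an $\AI$-crystal embedding and in treating both inclusions for the image identification.
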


\begin{proof}
The assignment $T \mapsto (P^{\AI}(T),Q^{\AI}(\CR(T)))$ factors
$$
\SST_n(\lm) \xrightarrow[]{\CR} \clW \xrightarrow[]{\RS^{\AI}} \bigsqcup_{\rho \in \Par_m} \SST_n^{\AI}(\rho) \times \OT_n(\rho).
$$
Hence, it suffices to show that
$$
\{ Q^{\AI}(\CR(T)) \mid T \in \SST_n(\lm) \} = \bigsqcup_{\rho \in \Par_m} \{ Q' \in \OT_n(\rho) \mid Q(Q') = T(\lm) \}
$$

Let $T \in \SST_n(\lm)$. Since $Q(\CR(T)) = T(\lm)$, we see that
$$
Q(Q^{\AI}(\CR(T))) = T(\lm)
$$
for all $T \in \SST_n(\lm)$. On the other hand, if $Q' \in \OT_n(\rho)$ is such that $Q(Q') = T(\lm)$, then there exist $w \in \clW$ and $P \in \SST_n^{\AI}(\rho)$ such that
$$
P^{\AI}(w) = P,\ Q^{\AI}(w) = Q', \AND Q(w) = T(\lm).
$$
This shows that if we set $T := P(w)$, then we have $\CR(T) = w$, and hence,
$$
Q^{\AI}(\CR(T)) = Q^{\AI}(w) = Q'.
$$
This completes the proof.
\end{proof}

\begin{cor}\label{branching rule}
Let $\lm \in X_{\frgl_n,\geq 0}^+$ and $\nu \in X_{\frso_n,\Int}^+$. Then, we have
$$
V^{\frgl_n}(\lm) \simeq \bigoplus_{\nu \in X_{\frso_n,\Int}^+} V^{\frso_n}(\nu)^{\oplus [\lm:\nu]},
$$
where
$$
[\lm:\nu] := \sharp \{ Q' \in \OT_n(\nu_1,\nu_3,\ldots,\nu_{2m-3},|\nu_{2m-1}|) \mid Q(Q') = T(\lm) \}.
$$
\end{cor}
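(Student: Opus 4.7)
The plan is to pass from the $\AI$-crystal isomorphism
\begin{align*}
\SST_n(\lm) \simeq \bigsqcup_{\rho \in \Par_m} \SST_n^{\AI}(\rho) \times \{ Q' \in \OT_n(\rho) \mid Q(Q') = T(\lm) \}
\end{align*}
furnished by the preceding theorem to a character identity, and then to a module isomorphism via complete reducibility. Taking $\ch_{\AI}$ on both sides yields
\begin{align*}
\ch_{\AI} \SST_n(\lm) \;=\; \sum_{\rho \in \Par_m} [\lm : \rho] \, \ch_{\AI} \SST_n^{\AI}(\rho),
\end{align*}
where $[\lm : \rho] := \#\{ Q' \in \OT_n(\rho) \mid Q(Q') = T(\lm) \}$. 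The left-hand side equals $\ch_{\frso_n} V^{\frgl_n}(\lm)$ by the identity $\ch_{\frso_n} V^{\mathfrak{gl}_n}(\lm) = \ch_{\AI} \SST_n(\lm)$ recorded at the beginning of Section \ref{Section Representation theoretic interpretation}, and each summand on the right equals $\ch V^{\mathfrak{so}_n}(\rho)$ by Theorem \ref{main theorem}.

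Next I would rewrite the sum over $\rho \in \Par_m$ as a sum over $\nu \in X_{\frso_n, \Int}^+$. The assignment sending $\rho$ to $\{\nu_\rho\}$ when $\ell(\rho) < \frac{n}{2}$ and to $\{\nu_\rho^+, \nu_\rho^-\}$ when $\ell(\rho) = \frac{n}{2}$ (only possible in the even case) sets up a bijection between $X_{\frso_n, \Int}^+$ and the collection of pairs consisting of a partition $\rho \in \Par_m$ together with a choice of sign when $\ell(\rho) = m = \frac{n}{2}$; inversely, every $\nu \in X_{\frso_n, \Int}^+$ corresponds to $\rho = (\nu_1, \nu_3, \ldots, \nu_{2m-3}, |\nu_{2m-1}|)$. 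Each copy of $V^{\mathfrak{so}_n}(\rho)$ contributes exactly one copy of $V^{\frso_n}(\nu)$ for each $\nu$ in the fiber over $\rho$, by the very definition of $V^{\mathfrak{so}_n}(\rho)$, so regrouping coefficients transforms the previous identity into
\begin{align*}
\ch_{\frso_n} V^{\frgl_n}(\lm) \;=\; \sum_{\nu \in X_{\frso_n,\Int}^+} [\lm : \nu] \, \ch_{\frso_n} V^{\frso_n}(\nu).
\end{align*}
Complete reducibility of $V^{\frgl_n}(\lm)$ as an $\frso_n$-module, together with the linear independence of irreducible characters, then promotes this character identity to the desired module isomorphism.

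The only step that requires real care is the bookkeeping in the even-$n$, $\ell(\rho) = m$ case, where a single $\rho$ encodes two distinct dominant weights $\nu_\rho^\pm$. There one must verify that the single counting set $\{Q' \in \OT_n(\rho) : Q(Q') = T(\lm)\}$ produces the correct multiplicity for \emph{each} of $V^{\frso_n}(\nu_\rho^+)$ and $V^{\frso_n}(\nu_\rho^-)$ separately; this is precisely why the definition of $[\lm : \nu]$ in the statement uses $|\nu_{2m-1}|$ rather than $\nu_{2m-1}$, and it falls out immediately from the defining decomposition $V^{\mathfrak{so}_n}(\rho) = V^{\mathfrak{so}_n}(\nu_\rho^+) \oplus V^{\mathfrak{so}_n}(\nu_\rho^-)$.
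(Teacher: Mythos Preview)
Your proof is correct and follows precisely the approach implicit in the paper, which states this result as an immediate corollary of the preceding theorem without spelling out a proof. You have correctly identified all the ingredients: the $\AI$-crystal isomorphism from the theorem, the identity $\ch_{\frso_n} V^{\frgl_n}(\lm) = \ch_{\AI} \SST_n(\lm)$ from Subsection~\ref{subsection results for iquantum group}, Theorem~\ref{main theorem}\,(2), the bijection between $X_{\frso_n,\Int}^+$ and partitions (with a sign when $\ell(\rho)=n/2$), and complete reducibility.
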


\begin{ex}\normalfont
  Let $n = 3$, $\lambda = (2,1)$.
  Then, the $\mathfrak{so}_n$-oscillating tableaux of length $|\lambda| = 3$ are the following (we omit the second factors as they are all $0$):
  \begin{align*}
    \begin{split}
      &Q'_1 = (\emptyset, (1), (2), (3)), \\
      &Q'_2 = (\emptyset, (1), (2), (2)), \\
      &Q'_3 = (\emptyset, (1), (2), (1)), \\
      &Q'_4 = (\emptyset, (1), (1), (2)), \\
      &Q'_5 = (\emptyset, (1), (1), (1)), \\
      &Q'_6 = (\emptyset, (1), (1), \emptyset), \\
      &Q'_7 = (\emptyset, (1), \emptyset, (1)).
    \end{split}
  \end{align*}
  We have
  \begin{align*}
    \begin{split}
      &(\ytableausetup{smalltableaux}
      \begin{ytableau}
      1 & 2 & 2
      \end{ytableau}, Q'_1) \xrightarrow{(\RS^{\AI})^{-1}} (1,2,2) \xrightarrow{\RS} (\ytableausetup{smalltableaux}
      \begin{ytableau}
      1 & 2 & 2
      \end{ytableau}, \ytableausetup{smalltableaux}
      \begin{ytableau}
      1 & 2 & 3
      \end{ytableau}), \\
      &(\ytableausetup{smalltableaux}
      \begin{ytableau}
      1 & 2
      \end{ytableau}, Q'_2) \xrightarrow{(\RS^{\AI})^{-1}} (2,3,2) \xrightarrow{\RS} (\ytableausetup{smalltableaux}
      \begin{ytableau}
      2 & 2 \\
      3
      \end{ytableau}, \ytableausetup{smalltableaux}
      \begin{ytableau}
      1 & 2 \\
      3
      \end{ytableau}), \\
      &(\ytableausetup{smalltableaux}
      \begin{ytableau}
      1
      \end{ytableau}, Q'_3) \xrightarrow{(\RS^{\AI})^{-1}} (2,2,1) \xrightarrow{\RS} (\ytableausetup{smalltableaux}
      \begin{ytableau}
      1 & 2 \\
      2
      \end{ytableau}, \ytableausetup{smalltableaux}
      \begin{ytableau}
      1 & 2 \\
      3
      \end{ytableau}), \\
      &(\ytableausetup{smalltableaux}
      \begin{ytableau}
      1 & 2
      \end{ytableau}, Q'_4) \xrightarrow{(\RS^{\AI})^{-1}} (3,2,2) \xrightarrow{\RS} (\ytableausetup{smalltableaux}
      \begin{ytableau}
      2 & 2 \\
      3
      \end{ytableau}, \ytableausetup{smalltableaux}
      \begin{ytableau}
      1 & 3 \\
      2
      \end{ytableau}), \\
      &(\ytableausetup{smalltableaux}
      \begin{ytableau}
      1
      \end{ytableau}, Q'_5) \xrightarrow{(\RS^{\AI})^{-1}} (2,1,2) \xrightarrow{\RS} (\ytableausetup{smalltableaux}
      \begin{ytableau}
      1 & 2 \\
      2
      \end{ytableau}, \ytableausetup{smalltableaux}
      \begin{ytableau}
      1 & 3 \\
      2
      \end{ytableau}), \\
      &(\emptyset, Q'_6) \xrightarrow{(\RS^{\AI})^{-1}} (3,2,1) \xrightarrow{\RS} (\ytableausetup{smalltableaux}
      \begin{ytableau}
      1 \\
      2 \\
      3
      \end{ytableau}, \ytableausetup{smalltableaux}
      \begin{ytableau}
      1 \\
      2 \\
      3
      \end{ytableau}), \\
      &(\ytableausetup{smalltableaux}
      \begin{ytableau}
      1
      \end{ytableau}, Q'_7) \xrightarrow{(\RS^{\AI})^{-1}} (1,1,1) \xrightarrow{\RS} (\ytableausetup{smalltableaux}
      \begin{ytableau}
      1 & 1 & 1
      \end{ytableau}, \ytableausetup{smalltableaux}
      \begin{ytableau}
      1 & 2 & 3
      \end{ytableau}).
    \end{split}
  \end{align*}
  Therefore, we obtain
  $$
  [\lambda; (3)] = 0,\ [\lambda; (2)] = 1,\ [\lambda; (1)] = 1,\ [\lambda; \emptyset] = 0.
  $$
  Compare this result with the $\AI$-crystal graph of $\SST(\lambda)$ in equation \eqref{eq: AIcrystal graph of 2,1}.
\end{ex}

Corollary \ref{branching rule} gives a combinatorial cancellation-free branching rule from $\frgl_n$ to $\frso_n$; the multiplicity $[\lm:\mu]$ is equal to the number of $\mathfrak{so}_n$-oscillating tableaux satisfying certain conditions. Jang and Kwon \cite{JK21} proved that the multiplicity $[\lm:\nu]$ is equal to the number of Littlewood-Richardson tableaux satisfying certain conditions. Their description of the multiplicities generalizes famous Littlewood's branching rule \cite{Li44,Li50}. It would be interesting to compare their multiplicity set with ours.

Jagenteufel \cite{J20} found an algorithm which gives a bijection from $\OT_n(\rho)$ (with $n$ odd) to the set of pairs of a standard Young tableau of shape $\lm$ in alphabet $[1,|\lm|]$ and an orthogonal Littlewood-Richardson tableau associated to $\lm,\mu$ (in \cite{J20}, $\mathfrak{so}_n$-oscillating tableaux are called vacillating tableaux). This algorithm may be used to make the assignment $Q' \mapsto Q(Q')$ more explicit.

\end{document}